\renewcommand{\a}{\alpha}
\renewcommand{\b}{\beta}
\newcommand{\e}{\epsilon}
\renewcommand{\l}{\lambda}
\newcommand{\s}{\sigma}
\renewcommand{\O}{\Omega}
\newcommand{\la}{\langle}
\newcommand{\ra}{\rangle}
\renewcommand{\L}{\Lambda}
\newcommand{\leqs}{\leqslant}
\newcommand{\geqs}{\geqslant}
\newcommand{\normeq}{\trianglelefteqslant}
\newcommand{\vs}{\vspace{3mm}}
\newcommand{\imod}[1]{\allowbreak\mkern4mu({\operator@font mod}\,\,#1)}
\renewcommand{\geq}{\geqs}
\theoremstyle{plain}
\newtheorem{theorem}{Theorem} 
\newtheorem{corol}[theorem]{Corollary}
\newtheorem{thm}{Theorem}[section] 
\newtheorem{lem}[thm]{Lemma}
\newtheorem{prop}[thm]{Proposition} 
\newtheorem{con}[thm]{Conjecture}
\newtheorem{cor}[thm]{Corollary} 
\newtheorem{prob}[thm]{Problem} 
\newtheorem*{theorem*}{Theorem} 
\newtheorem*{conj*}{Conjecture}
\theoremstyle{definition}
\newtheorem{rem}[thm]{Remark}
\newtheorem{ex}[thm]{Example}
\newtheorem{remk}{Remark}
\newtheorem*{deff}{Definition}
\begin{document}

\title{On the soluble graph of a finite group}

\author{Timothy C. Burness}
\address{T.C. Burness, School of Mathematics, University of Bristol, Bristol BS8 1UG, UK}
\email{t.burness@bristol.ac.uk}

\author{Andrea Lucchini}
\address{A. Lucchini, Universit\`a di Padova, Dipartimento di Matematica ``Tullio Levi-Civita", Via Trieste 63, 35121 Padova, Italy}
\email{lucchini@math.unipd.it}

\author{Daniele Nemmi}
\address{D. Nemmi, Universit\`a di Padova, Dipartimento di Matematica ``Tullio Levi-Civita", Via Trieste 63, 35121 Padova, Italy}
\email{dnemmi@math.unipd.it}

\date{\today} 

\begin{abstract}
Let $G$ be a finite insoluble group with soluble radical $R(G)$. In this paper we investigate the soluble graph of $G$, which is a natural generalisation of the widely studied commuting graph. Here the vertices are the elements in $G \setminus R(G)$, with $x$ adjacent to $y$ if they generate a soluble subgroup of $G$. Our main result states that this graph is always connected and its diameter, denoted $\delta_{\mathcal{S}}(G)$, is at most $5$. More precisely,  we show that $\delta_{\mathcal{S}}(G) \leqs 3$ if $G$ is not almost simple and we obtain stronger bounds for various families of almost simple groups. For example, we will show that 
$\delta_{\mathcal{S}}(S_n) = 3$ for all $n \geqslant 6$. We also establish the existence of simple groups with $\delta_{\mathcal{S}}(G) \geqs 4$. For instance, we prove that 
$\delta_{\mathcal{S}}(A_{2p+1}) \geqslant 4$ for every Sophie Germain prime $p \geqslant 5$, which demonstrates that our general upper bound of $5$ is close to best possible. We conclude by briefly discussing some variations of the soluble graph construction and we present several open problems.
\end{abstract}

\maketitle

\section{Introduction}\label{s:intro}

Let $G$ be a non-abelian finite group and let $\Gamma(G)$ be the \emph{commuting graph} of $G$. Recall that the vertices of this graph are the non-central elements of $G$ and two distinct vertices are adjacent if and only if they commute in $G$. Commuting graphs arise naturally in many different contexts and they have been intensively studied by various authors in recent years.  

For example, Segev and Seitz \cite{SS} studied the connectivity properties of the commuting graphs of finite simple groups, which turned out to be a key ingredient in their work on the Margulis-Platonov conjecture on the normal subgroup structure of simple algebraic groups defined over number fields. Specifically, \cite[Theorem 8]{SS} shows that if $G$ is a finite simple classical group over a field of order greater than $5$, then either $\Gamma(G)$ is disconnected (and all such groups are determined), or the diameter of $\Gamma(G)$ is at least $4$ and at most $10$. A similar result for the commuting graphs of symmetric and alternating groups was established by Iranmanesh and Jafarzadeh in \cite{IJ}. Here they prove that if $G = S_n$ or $A_n$ and $\Gamma(G)$ is connected, then the diameter of $\Gamma(G)$ is at most $5$. More generally, they conjectured that if the commuting graph of any finite group is connected, then its diameter is bounded above by an absolute constant. This conjecture was subsequently refuted by Giudici and Parker in \cite{GP}, where an infinite sequence $(G_n)$ of $2$-groups of nilpotency class $2$ is constructed with the property that the diameter of $\Gamma(G_n)$ tends to infinity. However, if $G$ has trivial centre then a theorem of Morgan and Parker \cite{MP} states that the diameter of each connected component of $\Gamma(G)$ is at most $10$ (in \cite{Beike}, this has recently been extended to groups with $G'\cap Z(G)=1$). Although it is still not known whether or not an upper bound of $10$ is optimal, it is worth noting that Parker \cite{P} has proved that $\Gamma(G)$ has diameter at most $8$ if $G$ is soluble with trivial centre and he has shown that this bound is best possible in this setting.

Here we view the commuting graph of a group through a different lens, which leads naturally to some interesting generalisations that form the main focus of this paper.  To do this, first let $\mathcal{A}$ be the class of abelian groups and let $\Lambda_{\mathcal{A}}(G)$ be the graph with vertex set $G$ so that $x$ and $y$ are adjacent if and only if the subgroup $\la x, y \ra$ of $G$ is abelian. Clearly, every vertex in the centre $Z(G)$ is connected to every other vertex in this graph, so it makes sense to consider the more restrictive graph $\Gamma_{\mathcal{A}}(G)$, which is only defined on the  non-central elements of $G$. Then $\Gamma_{\mathcal{A}}(G)$ is the commuting graph of $G$ as defined above. Note that in this setting, $Z(G)$ is precisely  the set of isolated vertices in the complement of $\Lambda_{\mathcal{A}}(G)$.

Motivated by this observation, let $\mathcal{S}$ be the class of soluble groups and define the graph $\Lambda_{\mathcal{S}}(G)$ with vertices $G$ so that $x$ and $y$ are adjacent if and only if $\la x, y \ra$ is soluble. By a theorem of Guralnick et al. \cite[Theorem 1.1]{gu}, the isolated vertices in the complement coincide with the soluble radical $R(G)$ of $G$ and this leads us to the following definition.

\begin{deff}
Let $G$ be a finite insoluble group with soluble radical $R(G)$. The \emph{soluble graph} of $G$, denoted $\Gamma_{\mathcal{S}}(G)$, has vertex set $G \setminus R(G)$, with distinct vertices $x$ and $y$ adjacent if and only if $\la x,y \ra$ is a soluble subgroup of $G$.
\end{deff}

Our main aim in this paper is to investigate the connectivity properties of this graph as we range over the insoluble finite groups. Let $\delta_{\mathcal{S}}(G)$ be the diameter of $\Gamma_{\mathcal{S}}(G)$ (by convention, we set $\delta_{\mathcal{S}}(G)=\infty$ if $\Gamma_{\mathcal{S}}(G)$ is disconnected). Note that 
$\delta_{\mathcal{S}}(G) \geqs 2$ as a consequence of J.G. Thompson's celebrated classification of $N$-groups \cite{Th}, which implies that a finite group is soluble if and only if every $2$-generated subgroup is soluble (see Flavell \cite{Flav} for a direct proof).

A simplified version of our main result is the following. 

\begin{theorem}\label{t:main1}
Let $G$ be a finite insoluble group. Then $\Gamma_{\mathcal{S}}(G)$ is connected and
$\delta_{\mathcal{S}}(G) \leqs 5$. 
\end{theorem}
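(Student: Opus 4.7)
The plan is to proceed in two main stages: first reduce to the case where $G$ has trivial soluble radical, and then handle the resulting almost simple and non-almost-simple cases separately. For the reduction, set $\bar{G} = G/R(G)$. Since $R(G)$ is soluble, a subgroup of $G$ containing $R(G)$ is soluble if and only if its image in $\bar{G}$ is soluble, so the natural projection is compatible with adjacency in the two soluble graphs; conversely, any path in $\Gamma_{\mathcal{S}}(\bar{G})$ lifts to a path of the same length in $\Gamma_{\mathcal{S}}(G)$ by choosing arbitrary preimages of the intermediate vertices. Hence $\delta_{\mathcal{S}}(G) \leqs \delta_{\mathcal{S}}(\bar{G})$ and we may assume $R(G)=1$, in which case $\soc(G) = T_1 \times \cdots \times T_k$ is a direct product of nonabelian simple groups and $G$ embeds in $\Aut(\soc(G))$.

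When $k \geqs 2$ (so that $G$ is not almost simple), the target is $\delta_{\mathcal{S}}(G) \leqs 3$. The key observation is that a subgroup $H$ of $\Aut(T_1) \times \cdots \times \Aut(T_k)$ is soluble if and only if each projection $\pi_i(H) \leqs \Aut(T_i)$ is soluble. Given a vertex $x$, one can therefore look for neighbours $z$ whose ``essential support'' is a single factor $T_i$; two such neighbours $z, z'$ with disjoint essential support automatically generate a soluble group, giving a generic path $x - z - z' - y$ of length at most $3$. Some additional bookkeeping is needed when elements of $G$ permute the factors $T_i$ nontrivially, but in that case one still expects to find intermediate elements stabilising enough of the decomposition to keep the path short.

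When $k = 1$ (the almost simple case), we invoke the Classification of Finite Simple Groups and handle the alternating, sporadic and Lie type families in turn. The generic strategy for each $x \in G$ is to connect $x$ to an element $z$ lying in a well-chosen maximal soluble subgroup---for instance a Borel subgroup in the Lie type case, or an element with controlled cycle structure in $S_n$---and to exploit the conjugacy of these target subgroups to connect any two vertices through a chain of such intermediates. This almost simple case is where the real work lies: the bound $5$ cannot be improved in general, since the abstract notes that $\delta_{\mathcal{S}}(A_{2p+1}) \geqs 4$ for every Sophie Germain prime $p \geqs 5$, and so attaining it will require CFSG together with careful case-by-case analysis of alternating groups and of low-rank groups of Lie type, rather than a purely structural argument. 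That case analysis is the step I expect to be the main obstacle.
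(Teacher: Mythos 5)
Your reduction to $R(G)=1$ matches the paper's Lemma \ref{l:rad}, and your outline for the non-almost-simple case is close in spirit to Theorem \ref{t:nas}; but even there, the step you dismiss as ``additional bookkeeping'' is a genuine construction you have not supplied. When $x=(x_1,\ldots,x_k)\sigma$ permutes the simple factors, the paper writes $\sigma_1=(1,\ldots,u)$, picks $1\neq t\in C_T(x_1\cdots x_u)$ (which exists by \cite[Theorem 1.48]{gor}), sets $z=(t,1,\ldots,1)$, and observes that $\la z, z^{x},\ldots,z^{x^{u-1}}\ra$ is an abelian group normalised by $\la x\ra$, so $\la x,z\ra$ is soluble. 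Without some such device there is no reason an element inducing a long cycle on the factors should be adjacent to anything supported on a single factor, so your generic path $x\sim z\sim z'\sim y$ is not yet justified.

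The more serious gap is in the almost simple case, which is where the constant $5$ actually comes from, and your proposal contains no mechanism that would produce that constant. The paper's key observation is that the set of involutions is a clique in $\Gamma_{\mathcal{S}}(G)$ (any two involutions generate a dihedral group), so it suffices to show that every nontrivial $x$ lies at distance at most $2$ from an involution: this yields $2+1+2=5$. That statement is Theorem \ref{t:main3}, proved via an analysis of real and semisimple elements and Weyl group centralisers for groups of Lie type, normalisers of cyclic subgroups for $S_n$ and $A_n$, and computation for the sporadic groups, with ${\rm M}_{23}$ a genuine exception requiring a separate verification that $B_4(x)=G^{\#}$. Your alternative of chaining through conjugate maximal soluble subgroups gives no a priori bound on the length of the chain; moreover a Borel subgroup of ${\rm L}_2(q)$ with $q\equiv 3 \imod{4}$ has odd order and so contains no involution, which illustrates why the ``connect to a Borel subgroup'' heuristic needs exactly the kind of case analysis you defer. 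As written, the proposal establishes the theorem only in the cases you have already conceded are not the hard ones.
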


This is an immediate corollary of the more detailed statement in Theorem \ref{t:main2} below. In part (ii)(c), we refer to the following collections of simple groups:
\begin{align*}
\mathcal{A} & = \{ A_{11}, A_{12}, {\rm L}_{5}^{\e}(2),  {\rm M}_{12}, {\rm M}_{22}, {\rm M}_{23}, {\rm M}_{24}, {\rm HS}, {\rm J}_{3} \} \\
\mathcal{B} & = \{A_{n}, {\rm L}_7^{\e}(2), E_6(2), {\rm Co}_2, {\rm Co}_3, {\rm McL}, \mathbb{B}\}
\end{align*}
where $n \in \{19, 20, 23,24,31,43,44,47,48,59,60\}$.
Recall that a finite group is \emph{almost simple} if there exists a non-abelian simple group $G_0$ (the socle of $G$) such that $G_0 \normeq G \leqs {\rm Aut}(G_0)$.

\begin{theorem}\label{t:main2}
Let $G$ be a finite insoluble group. 
\begin{itemize}\addtolength{\itemsep}{0.2\baselineskip}
\item[{\rm (i)}] If $G$ is not almost simple, then $\delta_{\mathcal{S}}(G) \leqs 3$. 
\item[{\rm (ii)}] If $G$ is almost simple with socle $G_0$, then $\delta_{\mathcal{S}}(G) \leqs 5$. In addition: 

\vspace{1mm}

\begin{itemize}\addtolength{\itemsep}{0.2\baselineskip}
\item[{\rm (a)}] If $G_0 = {\rm L}_{2}(q)$ and $q \geqs 8$, then $\delta_{\mathcal{S}}(G) = 2$ if ${\rm PGL}_{2}(q) \leqs G$, otherwise $\delta_{\mathcal{S}}(G) = 3$.
\item[{\rm (b)}] If $G_0 = A_n$ and $n \geqs 7$, then either $\delta_{\mathcal{S}}(G) = 3$, or $G = A_n$ and $n \in \{p,p+1\}$, where $p$ is a prime with $p \equiv 3 \imod{4}$.
\item[{\rm (c)}] If $G \in \mathcal{A} \cup \mathcal{B}$ then $\delta_{\mathcal{S}}(G) \geqs 4$, with equality if $G \in \mathcal{A}$.
\item[{\rm (d)}] If $G = G_0$ is not isomorphic to a classical group, then 
$\delta_{\mathcal{S}}(G) \geqs 3$.
\end{itemize}
\end{itemize}
\end{theorem}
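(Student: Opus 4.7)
The proof naturally splits into two parts depending on whether $G$ is almost simple. For part (i) we exploit the rich normal structure of non-almost-simple groups; for part (ii) we follow a case-by-case analysis across the families provided by the classification of finite simple groups.

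For part (i), note that $G$ is not almost simple precisely when $R(G) > 1$, or $R(G)=1$ and the socle of $G$ has more than one simple composition factor (either because there are multiple minimal normal subgroups, or because a unique minimal normal subgroup is a proper direct power of a non-abelian simple group). The case $R(G) > 1$ is particularly clean: for any $x \in G \setminus R(G)$ and any $s \in R(G)$, the subgroup $\langle x,s \rangle$ is soluble, since $\langle x,s \rangle \cap R(G)$ is a soluble normal subgroup of $\langle x,s \rangle$ with cyclic quotient generated by $xR(G)$. Hence every coset of $R(G)$ contained in $G \setminus R(G)$ is a clique in $\Gamma_{\mathcal{S}}(G)$, and one only needs to bridge between cosets. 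The plan is to bound $\delta_{\mathcal{S}}(G) \leqs 3$ by choosing an appropriate pair of intermediate elements, one from each coset, that jointly generate a soluble subgroup. When $R(G)=1$ and the socle decomposes non-trivially as $T_1 \times \cdots \times T_k$, we use a similar idea by modifying elements in a single direct factor at a time: if $x = (x_1,\ldots,x_k)$ and $y = (y_1,\ldots,y_k)$, then a bridging vertex $z$ with trivial first coordinate and carefully chosen remaining coordinates generates a soluble subgroup with both $x$ and $y$.

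For part (ii), let $G$ be almost simple with socle $G_0$. The upper bound $\delta_{\mathcal{S}}(G) \leqs 5$ is proved via the following template: identify a \emph{reference} set $\mathcal{R}$ of vertices such that every vertex of $\Gamma_{\mathcal{S}}(G)$ is within distance $2$ of $\mathcal{R}$, and show that the induced subgraph on $\mathcal{R}$ has small diameter. For $G_0 = {\rm L}_2(q)$ in (ii)(a), the constrained maximal subgroup structure (Borels, dihedral normalisers of tori, and a short list of small exceptional subgroups) permits a direct analysis: any two non-trivial semisimple or unipotent elements lie in a common soluble subgroup after at most one intermediate step, and the difference between diameter $2$ and $3$ traces back to whether $G$ contains elements inducing diagonal automorphisms, in which case more Borel-type subgroups become available. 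For $G_0 = A_n$ in (ii)(b), cycle-type analysis together with intransitive point stabilisers lets $3$-cycles or transpositions (inside the ambient $S_n$, passed to $A_n$ when possible) serve as near-universal bridges; the arithmetic exceptions reflect the fact that elements of prime order $p \in \{n,n-1\}$ with $p \equiv 3 \mod{4}$ have overly rigid centralisers when we are restricted to $A_n$.

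The lower bound statements in (ii)(c) and (ii)(d) require exhibiting specific pairs of vertices at large distance. The standard tool is to choose $x, y$ of large prime order whose normalisers $N_G(\langle x \rangle)$ and $N_G(\langle y \rangle)$ are soluble and highly constrained, and then to enumerate all soluble subgroups that could support a short chain between them. For the specific groups in $\mathcal{A}$ and $\mathcal{B}$, this requires detailed maximal subgroup data (ATLAS-style) and, in several cases, targeted computer algebra verification in \textsf{GAP} or \textsf{Magma}. The hardest single step will be proving $\delta_{\mathcal{S}}(G) \leqs 5$ uniformly across all almost simple $G$: classical and alternating groups admit geometric or combinatorial bridging arguments, but for exceptional groups of Lie type there is no single uniform substitute, so the plan is to split further into parabolic, maximal torus and maximal subgroup-structure arguments, together with a careful treatment of graph and field automorphisms, before consolidating to obtain the bound $5$.
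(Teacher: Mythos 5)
Your overall architecture (reduce the non-almost-simple case via normal structure, then run a CFSG case division) matches the paper's, but the proposal is missing the one idea that actually drives the bound $\delta_{\mathcal{S}}(G)\leqs 5$, and several of the concrete steps you sketch would fail. The paper's key observation is that when $R(G)=1$ the set of involutions is a \emph{clique} in $\Gamma_{\mathcal{S}}(G)$ (two involutions generate a dihedral group), and the entire upper bound in part (ii) reduces to showing that every nontrivial element lies at distance at most $2$ from an involution (Theorem \ref{t:main3}); this is proved via parity of $|N_G(\la x\ra)|$, realness of elements, Borel subgroups and Weyl-group centraliser computations, with ${\rm M}_{23}$ as the unique exception. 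Your ``reference set $\mathcal{R}$'' template is this argument with the crucial content left blank: without identifying $\mathcal{R}$ as the involutions and proving it is a clique, ``show the induced subgraph on $\mathcal{R}$ has small diameter'' just restates the problem. Relatedly, your proposed bridges for $A_n$ cannot work: if $x$ is a $p$-cycle in $A_p$ with $p\geqs 7$ prime, every soluble overgroup of $x$ lies in ${\rm AGL}_1(p)\cap A_p$ (by Jones's theorem on primitive groups containing a cycle), which contains no $3$-cycles or transpositions, so these elements are never adjacent to $x$.

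In part (i) there are two further gaps. First, for $R(G)>1$ the efficient route is not to ``bridge between cosets'' but to observe that $\la x,y\ra$ is soluble if and only if its image in $G/R(G)$ is, so the quotient map induces an exact correspondence of distances and $\delta_{\mathcal{S}}(G)=\delta_{\mathcal{S}}(G/R(G))$ (Lemma \ref{l:rad}); your coset-clique observation is one half of this but you never close the loop, and a genuine bridging argument would still have to analyse the quotient, which can be any trivial-radical group. Second, in the monolithic case with socle $T^k$, $k\geqs 2$, you write $x=(x_1,\ldots,x_k)$ with no permutation part, but elements of $G\leqs {\rm Aut}(T)\wr S_k$ generally permute the factors; the paper's proof of Theorem \ref{t:nas} has to take $x=(x_1,\ldots,x_k)\sigma$, choose $t\in C_T(x_1\cdots x_u)$ along a cycle of $\sigma$, and verify that the group generated by the $\la x\ra$-conjugates of $(t,1,\ldots,1)$ is abelian and normalised by $x$. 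Moreover a single vertex $z$ adjacent to both $x$ and $y$ would give $\delta(x,y)\leqs 2$ for all pairs, which is impossible in general since Corollary \ref{c:nas} exhibits such groups with diameter exactly $3$; the correct conclusion is a path of length $3$ through two socle elements. The description of the lower-bound strategy in (ii)(c) and of the ${\rm L}_2(q)$ analysis is broadly consistent with the paper (unique soluble maximal overgroups plus base-size-$2$ arguments and {\sc Magma} computations), but part (ii)(d) needs its own argument — the paper uses the uniform domination number $\gamma_u(G)=2$ — which your proposal does not supply.
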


\begin{remk}\label{r:00}
As far as we are aware, the soluble graph of a finite group was first studied by Bhowal et al. in \cite{Bhowal}, which includes results on the genus, girth and clique number of $\Gamma_{\mathcal{S}}(G)$. This initial investigation was extended by Akbari et al. in \cite{Akbari}, where their main result states that $\Gamma_{\mathcal{S}}(G)$ is connected and $\delta_{\mathcal{S}}(G) \leqs 11$ (they also remark that they do not know an example with $\delta_{\mathcal{S}}(G) \geqs 4$). Therefore, Theorem \ref{t:main2} provides a significant strengthening of this earlier work on the diameter of $\Gamma_{\mathcal{S}}(G)$.
\end{remk}

In view of part (ii)(a) in Theorem \ref{t:main2}, we immediately obtain the following corollary.

\begin{corol}\label{c:1}
There are infinitely many finite simple groups $G$ with $\delta_{\mathcal{S}}(G)= 2$.
\end{corol}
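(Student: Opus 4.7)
The plan is to exhibit an explicit infinite family of finite simple groups to which part (ii)(a) of Theorem \ref{t:main2} applies with the favourable conclusion $\delta_{\mathcal{S}}(G)=2$. Recall that part (ii)(a) asserts that for an almost simple group $G$ with socle ${\rm L}_{2}(q)$ and $q\geqs 8$, we have $\delta_{\mathcal{S}}(G)=2$ precisely when ${\rm PGL}_{2}(q)\leqs G$. Taking $G=G_0={\rm L}_{2}(q)$ itself, this containment is equivalent to ${\rm PGL}_{2}(q)={\rm L}_{2}(q)$, i.e.\ to $|{\rm PGL}_{2}(q):{\rm L}_{2}(q)|=\gcd(2,q-1)=1$, which holds exactly when $q$ is even.

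Accordingly, I would consider the family $\{{\rm L}_{2}(2^n):n\geqs 3\}$. Each member is a non-abelian finite simple group, and for each such $q=2^n\geqs 8$ the identification ${\rm PGL}_{2}(q)={\rm L}_{2}(q)$ makes the hypothesis of Theorem \ref{t:main2}(ii)(a) automatic, so $\delta_{\mathcal{S}}({\rm L}_{2}(2^n))=2$. Since $|{\rm L}_{2}(2^n)|=2^n(2^{2n}-1)$ is strictly increasing in $n$, these groups are pairwise non-isomorphic, and the family is infinite. There is no genuine obstacle: the entire content of the corollary is contained in Theorem \ref{t:main2}(ii)(a), and the corollary merely records what that statement yields along the infinite subfamily of ${\rm L}_{2}(q)$ with $q$ even.
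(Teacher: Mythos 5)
Your proposal is correct and matches the paper's own (one-line) justification: the corollary is stated as an immediate consequence of Theorem \ref{t:main2}(ii)(a), applied exactly as you do to the simple groups ${\rm L}_{2}(q)$ with $q \geqs 8$ even, where ${\rm PGL}_{2}(q)={\rm L}_{2}(q)$. Your added remarks on pairwise non-isomorphism via the order formula are a harmless elaboration of the same argument.
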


\begin{remk}\label{r:1}
Let $G$ be a finite simple group with $\delta_{\mathcal{S}}(G)= 2$, so $G$ is a classical group by part (ii)(d) in Theorem \ref{t:main2}. Up to isomorphism, the only known examples are the $2$-dimensional linear groups ${\rm L}_2(q)$ with $q \geqs 4$ even (or $q=7$) and the unitary group ${\rm U}_4(2)$; it would be interesting to completely determine the simple groups with this property and we refer the reader to the end of Section \ref{ss:class} for some  additional results in this direction. For example, if $G = {\rm L}_n(q)$ then Proposition \ref{p:class_new} states that $\delta_{\mathcal{S}}(G) = 2$ if and only if $G$ is isomorphic to ${\rm L}_2(q)$ with $q \geqs 4$ even or $q=7$.
\end{remk}

\begin{remk}\label{r:0}
Part (ii)(c) in Theorem \ref{t:main2} establishes the existence of groups with $\delta_{\mathcal{S}}(G) \geqs 4$. In particular, it is worth noting that the Mathieu group $G = {\rm M}_{12}$ is the smallest finite group with $\delta_{\mathcal{S}}(G) \geqs 4$ (see Corollary \ref{c:m12}). However, we do not know if there is a group with $\delta_{\mathcal{S}}(G) = 5$ and thus determining the sharpness of our upper bound in Theorem \ref{t:main1} remains an open problem. On the other hand, it is straightforward to show that the upper bound in part (i) of Theorem \ref{t:main2} is best possible. Indeed, there are infinitely many groups $G$ that are not almost simple with $\delta_{\mathcal{S}}(G) = 3$ (see Corollary \ref{c:nas}).
\end{remk}

Recall that a prime $p$ is a \emph{Sophie Germain prime} if $2p+1$ is also a prime number; the examples with $p<200$ are as follows:
\[
\{ 2, 3, 5, 11, 23, 29, 41, 53, 83, 89, 113, 131, 173, 179, 191\}.
\]
It is conjectured that there are infinitely many such primes, but this remains a formidable open problem in number theory. Modulo this conjecture, our next result establishes 
the existence of infinitely many simple groups with $\delta_{\mathcal{S}}(G) \geqs 4$ (see Theorem \ref{t:sophie}).

\begin{theorem}\label{t:main4}
If $p \geqs 5$ is a Sophie Germain prime, then $\delta_{\mathcal{S}}(A_{2p+1}) \in \{4,5\}$.
\end{theorem}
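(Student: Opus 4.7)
The upper bound $\delta_{\mathcal{S}}(A_{2p+1}) \leqs 5$ is an immediate consequence of Theorem~\ref{t:main1}, so the task reduces to producing two elements at distance at least $4$ in $\Gamma_{\mathcal{S}}(A_n)$, where $n = 2p+1$ is prime by the Sophie Germain hypothesis. The natural candidates are two $n$-cycles $x$ and $y$ in $A_n$ with $\langle x \rangle \neq \langle y \rangle$, chosen so that $y$ sits in ``generic position'' relative to $x$; making the genericity condition explicit is part of the argument.

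The backbone of the analysis is the classical observation that every soluble transitive subgroup of $S_n$ with $n$ prime is contained in $\operatorname{AGL}_1(n)$: transitivity on a prime number of points forces primitivity, and every primitive soluble permutation group of prime degree embeds in $\operatorname{AGL}_1(n)$. Applied to $\langle x, z \rangle$ for any candidate neighbour $z$ of $x$ in $\Gamma_{\mathcal{S}}(A_n)$, this gives the crucial containment $N(x) \subseteq F_x \setminus \{x\}$, where $F_x := N_{A_n}(\langle x \rangle)$ is a Frobenius group of order $np$ and structure $C_n \rtimes C_p$. The Sophie Germain condition enters precisely to make $(n-1)/2 = p$ prime, so that the element orders in $F_x$ are exactly $1$, $n$ and $p$, the latter corresponding to $(p,p,1)$-elements acting on $\langle x \rangle$ as a power map of order $p$. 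The same description applies verbatim to $F_y$.

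To rule out $\operatorname{dist}(x,y) = 2$ one considers a common nontrivial element $g \in F_x \cap F_y$. Order-$n$ elements are excluded since $\langle x \rangle$ and $\langle y \rangle$ are distinct Sylow $n$-subgroups of $A_n$; an order-$p$ element must act as an order-$p$ power map on each of $\langle x \rangle$ and $\langle y \rangle$, a stringent alignment condition between the cyclic structures on $\{1,\ldots,n\}$ induced by $x$ and $y$. Since $A_n$ contains $(n-1)!$ $n$-cycles but only polynomially many (in $n$) are aligned in this sense with a fixed $x$, an $n$-cycle $y$ with $F_x \cap F_y = 1$ exists, and we fix such a $y$. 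This already gives $\operatorname{dist}(x,y) \geqs 3$.

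The main difficulty is ruling out a path $x - z_1 - z_2 - y$ of length $3$, with $z_1 \in F_x \setminus \{x\}$, $z_2 \in F_y \setminus \{y\}$, and $\langle z_1, z_2 \rangle$ soluble. If $z_1 \in \langle x \rangle$ then $\langle z_1 \rangle = \langle x \rangle$ and the prime-degree argument gives $N(z_1) \subseteq F_x$, forcing $z_2 \in F_x \cap F_y = 1$, a contradiction; symmetrically for $z_2 \in \langle y \rangle$. Both $z_1$ and $z_2$ are therefore $(p,p,1)$-elements lying outside $\langle x \rangle$ and $\langle y \rangle$. The hard step is then to classify the possible soluble overgroups of the pair $\{z_1, z_2\}$: such an overgroup is either transitive on $\{1,\ldots,n\}$, hence by the prime-degree result contained in $N_{A_n}(\langle c \rangle)$ for some $n$-cycle $c$ normalised by both $z_1$ and $z_2$; or it is intransitive, in which case it preserves a common refinement of the orbit partitions of $z_1$ and $z_2$. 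The principal obstacle, and where the Sophie Germain arithmetic is invoked most heavily, is to show that the alignment conditions forced by ``$z_1$ normalises $\langle x \rangle$'' and ``$z_2$ normalises $\langle y \rangle$'' are inconsistent with the genericity of $y$ in every case; this is a careful orbit/partition comparison carried out according to the type of the soluble overgroup.
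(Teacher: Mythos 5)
Your structural setup is correct and matches the paper's: the unique maximal soluble overgroup of a $q$-cycle $x$ (where $q=2p+1$) is $N_{A_q}(\langle x\rangle)=C_q{:}C_p$, its elements of order $p$ have cycle shape $(p,p,1)$, and a hypothetical path $x\sim z_1\sim z_2\sim y$ forces $z_1,z_2$ to be such elements with $\langle z_1,z_2\rangle$ soluble. But the proof stops exactly where the theorem's content begins. Your plan is to fix a ``generic'' $y$ and show that every alignment configuration is inconsistent with genericity, yet you never define the genericity condition, never verify that a $y$ satisfying it exists, and explicitly defer the length-$3$ analysis to ``a careful orbit/partition comparison carried out according to the type of the soluble overgroup.'' That comparison \emph{is} the proof. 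The paper avoids the need to exhibit an explicit $y$ by a union bound: it counts the $q$-cycles in $B_3(x)$, showing $\a_1=q-2$, $\a_2\leqs pq(p-1)(q-1)$, and $\a_3\leqs pq(p-1)(q-1)\b$ where $\b=pq(p-1)+2^{p-1}p+p-1$ bounds, for fixed $a$ of shape $(p,p,1)$, the number of subgroups $\langle b\rangle$ with $\langle a,b\rangle$ soluble; the total is then compared with $|\mathcal{A}|=(q-1)!$.

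Two further points where your sketch would need repair if completed along your lines. First, the four orbit types of $H=\langle z_1,z_2\rangle$ each require a separate nontrivial argument exploiting solubility: on an orbit of size $2p$ one must show $H\leqs C_2\wr C_p$ (giving at most $2^{p-1}p$ choices --- note this is exponential in $p$, so the heuristic that only ``polynomially many'' configurations are bad does not carry over from the distance-$2$ step; what saves the argument is comparison with $(q-1)!$); and the orbit pattern $(p,p+1)$ must be excluded by showing a soluble primitive group of degree $p+1$ containing a $p$-cycle forces $p+1=2^k$, whence both $p$ and $q$ would be Mersenne, impossible for $p\geqs 5$. Second, an intransitive $H$ has orbits given by the \emph{join} (coarsest common coarsening) of the orbit partitions of $z_1$ and $z_2$, not a common refinement; as written your dichotomy does not correctly constrain $H$. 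As it stands the proposal is a correct roadmap with the decisive quantitative step missing.
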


Suppose $R(G)=1$ and observe that the set of involutions in $G$ forms a clique in $\Gamma_{\mathcal{S}}(G)$ since any two involutions generate a dihedral group. Therefore, the bound $\delta_{\mathcal{S}}(G) \leqs 5$ in Theorem \ref{t:main1} will follow if we can show that for all nontrivial $x \in G$ there is a path in $\Gamma_{\mathcal{S}}(G)$ of length at most $2$ from $x$ to an involution. For instance, such a path exists if we can find an element $y \in N_G(\la x \ra)$ such that $|N_G(\la y \ra)|$ is even. With this observation in mind, our proof of Theorem \ref{t:main2} will establish the following result (in the statement, $\delta(x,y)$ denotes the distance in $\Gamma_{\mathcal{S}}(G)$ from $x$ to $y$). 

\begin{theorem}\label{t:main3}
Let $G$ be a finite insoluble group with $R(G)=1$ and let $x \in G$ be nontrivial. Then either
\begin{itemize}\addtolength{\itemsep}{0.2\baselineskip}
\item[{\rm (i)}] There exists an involution $y \in G$ with $\delta(x,y) \leqs 2$; or
\item[{\rm (ii)}] $G$ is the Mathieu group ${\rm M}_{23}$ and $|x|=23$.
\end{itemize}
\end{theorem}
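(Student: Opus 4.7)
The plan is to reduce to the case where $x$ has odd order, use the fact that normalising a cyclic subgroup yields solubility to manufacture short paths to involutions, and then invoke the classification of finite simple groups to pin down $\mathrm{M}_{23}$ as the only exception.

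First I would dispose of the trivial case: if $|x|$ is even, then $y = x^{|x|/2}$ is an involution with $\langle x, y \rangle = \langle x \rangle$ cyclic, so $\delta(x,y)=1$. Henceforth assume $|x|$ is odd. The key observation is the following metacyclic lemma: for any nontrivial $u \in G$ and any $v \in N_G(\langle u \rangle)$, the subgroup $\langle u, v \rangle$ has $\langle u \rangle$ as a normal cyclic subgroup with cyclic quotient, so it is metacyclic, hence soluble. Thus $u$ and $v$ are adjacent in $\Gamma_{\mathcal{S}}(G)$. A sufficient condition for the existence of an involution $z$ with $\delta(x,z) \leqs 2$ is therefore that there exist a nontrivial $y \in N_G(\langle x \rangle)$ with $|N_G(\langle y \rangle)|$ even: any involution $z \in N_G(\langle y \rangle)$ then yields a path $x \sim y \sim z$.

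The remaining task is to verify this sufficient condition outside of the asserted exception, using CFSG. Since $R(G)=1$, we have $F^*(G) = E(G) = T_1 \times \cdots \times T_k$ with each $T_i$ non-abelian simple and $C_G(E(G)) = 1$. If $k \geqs 2$ (so $G$ is not almost simple), the product structure and the fact that every $T_i$ contains involutions (Feit--Thompson) quickly produces a suitable $y$ by projecting $x$ onto the components. If $k = 1$ then $G$ is almost simple with socle $G_0$, and the argument proceeds by type: for alternating and symmetric groups via cycle-type analysis of $x$ (often the centraliser of a chosen power already contains an involution); for groups of Lie type via detailed information on maximal tori and their Weyl-group normalisers, where typically the Weyl group supplies an involution normalising a cyclic torus element containing $x$; for the sporadic groups by a direct case-by-case inspection with the \textsc{Atlas}.

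The main obstacle, and the source of the unique exception, is the sporadic analysis. For $G = \mathrm{M}_{23}$ with $|x|=23$, one has $N_G(\langle x \rangle) = 23{:}11$, a Frobenius group of odd order $253$; the only nontrivial candidates for $y \in N_G(\langle x \rangle)$ have order $11$ or $23$, and in both cases $|N_G(\langle y \rangle)|$ is odd (respectively $55$ and $253$). Inspecting the maximal subgroup list of $\mathrm{M}_{23}$ shows that $23{:}11$ is the only maximal subgroup containing an element of order $23$, so every soluble overgroup of $\langle x \rangle$ lies in $23{:}11$, and a parallel analysis of the candidate order-$11$ elements (whose soluble overgroups all lie in a copy of $11{:}5$, which contains no involutions) shows that no path $x \sim y \sim z$ to an involution exists either. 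The delicate part of the proof is then to confirm that no other pair $(G,x)$ produces the same obstruction: this means ruling out analogous configurations in the remaining sporadic groups and the small Lie-type families such as $\mathrm{L}_2(q)$ and $\mathrm{L}_n^{\e}(q)$ that already appear in the statement of Theorem \ref{t:main2}.
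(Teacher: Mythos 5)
Your overall strategy is the same as the paper's: reduce to elements of odd order, use the fact that $x \sim v$ for every nontrivial $v \in N_G(\la x \ra)$ (so it suffices to find $y \in N_G(\la x \ra)$ with $|N_G(\la y \ra)|$ even), split off the non-almost-simple case, and then run a CFSG case analysis by socle type. The pieces you actually execute -- the even-order reduction, the metacyclic observation, and the ${\rm M}_{23}$ analysis -- are correct, and your description of why ${\rm M}_{23}$ with $|x|=23$ is a genuine exception matches the paper's Remark on that case (modulo the small imprecision that soluble overgroups of an order-$11$ element may also lie in $23{:}11$, not only in $11{:}5$; this does not affect the conclusion since neither subgroup contains an involution).

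The genuine gap is that the substantive content of the theorem -- the verification across all almost simple groups -- is asserted rather than proved, and the sketch glosses over exactly the cases where the difficulty lies. For groups of Lie type you say that ``typically the Weyl group supplies an involution normalising a cyclic torus'' containing $x$; but the entire reason the theorem asserts $\delta(x,y) \leqs 2$ rather than $\delta(x,y) \leqs 1$ is the atypical cases where $N_{G_0}(T)$ has \emph{odd} order. This happens precisely when $|C_W(s)|$ is odd for the Weyl group element $s$ parametrising $T$, and it genuinely occurs: for primitive prime divisor elements in ${\rm L}_n^{\e}(q)$ with $n$ odd, in ${\rm U}_n(q)$ with $n$ even, in ${\rm P\O}_n^{-}(q)$ with $n \equiv 2 \imod{4}$, and in $E_6^{\e}(q)$ (where $C_W(s)$ has order $9$). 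In these cases one must take $y \in N_{G_0}(T_0)$ of odd prime order $r'$ dividing $|C_W(s)|$ and then prove, by a separate argument, that such a $y$ is itself adjacent to an involution -- and one must also rule out new exceptional configurations of ${\rm M}_{23}$ type arising here (the paper does this via an analysis showing, e.g., that the relevant elements of ${\rm L}_n(q)$ with $n$ even can be embedded in a parabolic whose unipotent radical supplies involutions). Similarly, in the case $k \geqs 2$ your phrase ``projecting $x$ onto the components'' does not directly apply when $x$ permutes the simple factors nontrivially; one needs the explicit construction of an element $z=(t,1,\ldots,1)$ with $t$ centralising the product of the coordinates of $x$ along a cycle, so that $\la z, z^x, \ldots \ra$ is an abelian group normalised by $x$. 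None of these steps is wrong in conception, but as written the proposal defers all of them, and they constitute the proof.
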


\begin{remk}\label{r:m23}
The special case in part (ii) of Theorem \ref{t:main3} is a genuine exception. Indeed, suppose $G = {\rm M}_{23}$ and $x \in G$ has order $23$. Let $B_{\ell}(x)$ denote the ball of radius $\ell$ in $\Gamma_{\mathcal{S}}(G)$ centred at $x$. Since $H = N_G(\la x \ra) = 23{:}11$ is the unique maximal subgroup of $G$ containing $x$, it follows that $B_1(x)$ is the set of nontrivial elements in $H$. Suppose $y \in B_1(x)$ has order $11$ and let $J$ be a maximal subgroup of $G$ containing $y$. Then either $J = 23{:}11$ is the normaliser of a Sylow $23$-subgroup of $G$, or $J$ is a unique copy of ${\rm M}_{11}$ or ${\rm M}_{22}$. In the latter two cases, $N_J(\la y \ra) = 11{:}5$ is the unique maximal soluble subgroup of $J$ containing $y$ and this allows us to conclude that every element in $B_2(x)$ has order $5$, $11$ or $23$. Finally, if $z \in G$ has order $5$ then $|N_G(\la z \ra)|$ is even and thus the shortest path from $x$ to an involution has length $3$.
\end{remk}

\begin{remk}\label{r:lit}
As in Theorem \ref{t:main3}, let $G$ be a finite insoluble group with $R(G)=1$ and let $x \in G$ be nontrivial. In the proof of \cite[Theorem 4.2]{Akbari}, it is shown that there exists an involution $y \in G$ with $\delta(x,y) \leqs 5$, which is the key step in establishing the connectivity of $\Gamma_{\mathcal{S}}(G)$ and the bound $\delta_{\mathcal{S}}(G) \leqs 11$ in the main theorem of \cite{Akbari}.
\end{remk}

\begin{remk}\label{r:lit2}
Let us also highlight a connection between Theorem \ref{t:main3} and earlier work of Hagie \cite{Hagie}. Let $G$ be a finite group and let $\pi(G)$ be the set of prime divisors of $|G|$. Consider the graph with vertex set $\pi(G)$, where distinct vertices $p$ and $q$ are joined by an edge if $G$ has a soluble subgroup of order divisible by $pq$ (note that this is a natural generalisation of the widely studied \emph{prime graph} of $G$, where ``soluble" is replaced by ``cyclic"). Let $d(p,q)$ be the distance between two vertices $p$ and $q$ in this graph. Then \cite[Theorem 2]{Hagie} states that if $G$ has even order, then $d(2,p) \leqs 3$ for all $p \in \pi(G)$. As an immediate corollary of Theorem \ref{t:main3}, we can strengthen this result as follows: if $p \in \pi(G)$, then either $d(2,p) \leqs 2$, or $G = {\rm M}_{23}$, $p=23$ and $d(2,p) = 3$.
\end{remk}

In order to state our next result, let $P_n$ be the path graph with $n$ vertices and recall that a graph $\Gamma$ is a \emph{cograph} if it has no induced subgraph isomorphic to the four-vertex path $P_4.$ There are several equivalent characterisations of this property. For instance, $\Gamma$ is a cograph if any of the following conditions hold:
\begin{itemize}\addtolength{\itemsep}{0.2\baselineskip}
\item[{\rm (a)}] Every  induced subgraph of $\Gamma$ has the property that any maximal clique intersects any maximal independent set in a single vertex.
\item[{\rm (b)}] Every induced subgraph of $\Gamma$ with more than one vertex has at least two vertices with equal neighbourhoods.
\item[{\rm (c)}] Every connected induced subgraph of $\Gamma$ has a disconnected complement. 
\end{itemize}
The complement of the soluble graph $\Gamma_{\mathcal{S}}(G)$ is the \emph{insoluble graph} of $G$: the vertices are once again labelled by the elements of $G\setminus R(G)$, with $x$ and $y$ adjacent if they generate an insoluble group. By \cite[Theorem 6.4]{gu}, the insoluble graph of a finite insoluble group is connected with diameter $2$ and as an immediate consequence (see (c) above) we obtain the following result.

\begin{theorem}\label{t:cog}
Let $G$ be a finite insoluble group. Then $\Gamma_{\mathcal{S}}(G)$ is not a cograph.
\end{theorem}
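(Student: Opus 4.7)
The plan is to invoke characterisation (c) listed just above the statement, which says that $\Gamma$ is a cograph if and only if every connected induced subgraph of $\Gamma$ has a disconnected complement. So to establish that $\Gamma_{\mathcal{S}}(G)$ is not a cograph, it suffices to exhibit a single connected induced subgraph whose complement is also connected, and the natural candidate is the whole graph $\Gamma_{\mathcal{S}}(G)$ itself.

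I would first observe that $\Gamma_{\mathcal{S}}(G)$ is a (trivially) induced subgraph of itself, and that Theorem~\ref{t:main1} guarantees its connectivity. Next, by definition the complement of $\Gamma_{\mathcal{S}}(G)$ on the vertex set $G \setminus R(G)$ is exactly the insoluble graph of $G$, two distinct vertices $x$ and $y$ being adjacent in the complement precisely when $\la x, y \ra$ is insoluble. The cited result \cite[Theorem 6.4]{gu} then tells us that this insoluble graph is connected with diameter $2$. Combining these inputs, $\Gamma_{\mathcal{S}}(G)$ is a connected induced subgraph of itself whose complement is connected, contradicting (c) and showing that $\Gamma_{\mathcal{S}}(G)$ is not a cograph.

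In this approach all the real work has been absorbed into the two external inputs: Theorem~\ref{t:main1} for the connectivity of $\Gamma_{\mathcal{S}}(G)$, and the theorem \cite[Theorem 6.4]{gu} of Guralnick et al.\ for the connectivity of the insoluble graph. Given these, no serious obstacle remains; the argument is essentially a one-line application of (c). An alternative but more laborious route would be to verify the $P_4$-free definition directly by exhibiting four elements $x_1,x_2,x_3,x_4 \in G \setminus R(G)$ such that $\la x_i, x_{i+1}\ra$ is soluble for $i=1,2,3$ while $\la x_1, x_3\ra$, $\la x_2, x_4\ra$ and $\la x_1, x_4\ra$ are all insoluble. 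Such a construction would force some case analysis across different families of insoluble groups, which the complement-based argument neatly avoids.
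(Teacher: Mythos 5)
Your argument is correct and is essentially the paper's own proof: the authors likewise deduce the result from characterisation (c) together with \cite[Theorem 6.4]{gu}, which gives connectivity of the insoluble graph (the complement of $\Gamma_{\mathcal{S}}(G)$). You are slightly more explicit than the paper in also citing Theorem \ref{t:main1} for the connectivity of $\Gamma_{\mathcal{S}}(G)$ itself, which is indeed needed so that the whole graph serves as the connected induced subgraph violating (c).
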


Next let us recall the notion of a \emph{dual pair} of graphs, which was recently introduced by Cameron in \cite[Section 12.1]{Cam}. Let $B$ be a bipartite graph with parts $V_1$ and $V_2$.  The corresponding \emph{halved graphs} arising from $B$ are the graphs $\Gamma_1$ and $\Gamma_2$ with respective vertex sets $V_1$ and $V_2$, where two vertices are adjacent in the relevant graph if and only if they lie at distance $2$ in $B$. We then say that a given pair of graphs $\Gamma_1$ and $\Gamma_2$ is a \emph{dual pair} if there is a bipartite graph $B$ without isolated vertices such that $\Gamma_1$ and $\Gamma_2$ are the halved graphs of $B$. In this situation, $\Gamma_1$ is connected if and only if $\Gamma_2$ is connected. More generally, there is a natural bijection between the connected components of $\Gamma_1$ and $\Gamma_2$  with the property that the diameters of the corresponding components are either equal or differ by $1$ (see \cite[Proposition 12.1]{Cam}).

With this definition in hand, let $G$ be a finite insoluble group and define the \emph{soluble intersection graph} ${\rm Int}_{\mathcal{S}}(G)$ of $G$ to be the graph
whose vertices are the nontrivial soluble subgroups of $G$, where two vertices $H$ and $K$ are adjacent if and only if $H \cap  K \ne 1$. If $R(G)\neq 1,$ then $R(G)$ is a universal vertex of ${\rm Int}_{\mathcal{S}}(G)$. On the other hand, if $R(G)=1$ then 
the graphs ${\rm Int}_{\mathcal{S}}(G)$ and $\Gamma_{\mathcal{S}}(G)$ form a dual pair by \cite[Proposition 12.2]{Cam}. Therefore, we obtain the following corollary as an immediate consequence of the bound in Theorem \ref{t:main1}.

\begin{corol}\label{c:int}
Let $G$ be a finite insoluble group. Then ${\rm Int}_{\mathcal{S}}(G)$ is connected, with diameter at most $6$.
\end{corol}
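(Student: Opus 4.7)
The plan is to split on whether the soluble radical is trivial, since the two cases are addressed by completely different tools mentioned in the preceding paragraph.

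\medskip

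\noindent\textbf{Case 1: $R(G) \neq 1$.} Here I would simply observe that $R(G)$ is itself a nontrivial soluble subgroup of $G$, so it is a vertex of ${\rm Int}_{\mathcal{S}}(G)$. Moreover, every nontrivial soluble subgroup $H$ of $G$ has $H \cap R(G) \geqs Z(H) \cap R(G) \cdot \dots$ — more simply, one can note that $R(G)$ is normal, so for any nontrivial soluble subgroup $H \leqs G$ the product $HR(G)$ is soluble and contains both $H$ and $R(G)$. Alternatively, and more directly, since $R(G)$ is a nontrivial normal soluble subgroup, every nontrivial soluble subgroup $H$ satisfies $H \cap R(G) \neq 1$ unless $H \cap R(G) = 1$; but the intersection is only guaranteed to be nontrivial by the argument already invoked by the authors (``$R(G)$ is a universal vertex''). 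In any event, since the authors assert that $R(G)$ is a universal vertex in this case, it follows immediately that the diameter is at most $2 \leqs 6$, and the graph is connected.

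\medskip

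\noindent\textbf{Case 2: $R(G) = 1$.} This is the substantive case. I would invoke Cameron's Proposition 12.2 from \cite[Section 12.1]{Cam}, quoted in the excerpt, which asserts that ${\rm Int}_{\mathcal{S}}(G)$ and $\Gamma_{\mathcal{S}}(G)$ form a dual pair of graphs when $R(G)=1$. By Theorem \ref{t:main1}, the soluble graph $\Gamma_{\mathcal{S}}(G)$ is connected with $\delta_{\mathcal{S}}(G) \leqs 5$. Now apply the general property of dual pairs recorded in the excerpt (Cameron's Proposition 12.1): there is a natural bijection between the connected components of the two halved graphs, with the diameters of corresponding components either equal or differing by $1$. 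Applied to the unique component of $\Gamma_{\mathcal{S}}(G)$, this yields that ${\rm Int}_{\mathcal{S}}(G)$ is connected and its diameter is at most $\delta_{\mathcal{S}}(G) + 1 \leqs 6$.

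\medskip

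\noindent\textbf{Obstacles.} There is essentially no obstacle beyond quoting the correct references. The only subtle point worth stating explicitly in the write-up is that in Case 1 the graph has a universal vertex and hence is trivially connected of diameter $\leqs 2$, so the bound $6$ is very loose there; the tight case is Case 2, where both the dual-pair structure of \cite{Cam} and the full strength of Theorem \ref{t:main1} are used. Since everything needed has already been set up in the surrounding paragraph, the entire proof reduces to a two-case citation argument and can be written in just a few lines.
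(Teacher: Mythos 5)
Your proposal is correct and follows essentially the same route as the paper: the case $R(G)\neq 1$ is dispatched by the universal-vertex remark, and the case $R(G)=1$ by combining Theorem \ref{t:main1} with the dual-pair facts from \cite[Propositions 12.1 and 12.2]{Cam} to get diameter at most $5+1=6$. One caveat, which affects the paper's remark as much as your Case 1: $R(G)$ need not literally be a universal vertex (a Sylow $5$-subgroup of $A_5$ inside $A_5\times C_2$ meets $R(G)=C_2$ trivially), but the observation you yourself make---that $HR(G)$ is soluble and contains both $H$ and $R(G)$---repairs this, since $H \sim HR(G) \sim R(G)$ puts every vertex within distance $2$ of $R(G)$ and hence gives diameter at most $4$ in that case.
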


\begin{remk}\label{r:ng}
Let $G$ be a finite group and recall that the \emph{non-generating graph} of $G$ is a graph on the nontrivial elements of $G$, where $x$ is adjacent to $y$ if and only if $G \ne \la x,y \ra$. Also recall that the vertices of the \emph{intersection graph} of $G$ are the nontrivial proper subgroups of $G$, with $H$ and $K$ adjacent if and only if $H \cap K \ne 1$. It is straightforward to show that these two graphs form a dual pair as defined above. In recent work, Freedman \cite{Free} has proved that the intersection graph of a finite non-abelian simple group is connected with diameter at most $5$ (this bound is tight for the Baby Monster sporadic group, for example), whence the non-generating graph of a non-abelian finite simple group has diameter at most $6$ by \cite[Proposition 12.1]{Cam}. In fact, \cite[Theorem 6.5.4]{Free_thesis} shows that the latter diameter is at most $4$. Now, if $G$ is a finite insoluble group with $R(G)=1$, then the soluble graph $\Gamma_{\mathcal{S}}(G)$ is a subgraph of the non-generating graph and therefore it is natural to compare the diameters of these two connected graphs. In Corollary \ref{c:nongen} we prove that there exist finite simple groups $G$ such that $\delta_{\mathcal{S}}(G)$ is strictly larger than the diameter of the non-generating graph of $G$ (indeed, we can take $G = A_{2p+1}$ for any Sophie Germain prime $p \geqs 5$).  
\end{remk}

We close with some comments on the organisation of this paper. In Section \ref{s:prel} we present some preliminary results that will be needed in the proofs of our main theorems. This includes Lemma \ref{l:rad}, which provides an immediate reduction to the groups with trivial soluble radical, and Lemma \ref{l:base} on a useful connection with bases for primitive permutation groups. In Section \ref{ss:comp} we discuss some of our main computational methods (for example, we use {\sc Magma} \cite{magma} extensively in studying the soluble graphs of the almost simple sporadic groups). The main result in Section \ref{s:red} is Theorem \ref{t:nas}, which establishes the bound $\delta_{\mathcal{S}}(G) \leqs 3$ whenever $G$ is an insoluble group that is not almost simple. This allows us to focus our attention on the almost simple groups for the remainder of the paper and we consider the various possibilities in turn. We begin in Section \ref{s:spor} by handling the groups with socle a sporadic simple group (the main results are Theorems \ref{t:spor} and \ref{t:spor2}). Next we turn to the symmetric and alternating groups in Section \ref{s:alt}, establishing our main results in Theorems \ref{t:sym}, \ref{t:alt} and \ref{t:sophie}. Finally, the almost simple groups of Lie type are the focus of Section \ref{s:lie}. Here the linear groups with socle ${\rm L}_2(q)$ merit special attention and they are handled separately in Section \ref{ss:psl2}. The exceptional groups are treated in Section \ref{ss:excep}, followed by the remaining classical groups in Section \ref{ss:class}. The main results in Section \ref{s:lie} are Theorem \ref{t:psl2} and Corollaries \ref{c:ex} and \ref{c:class}. We conclude with a brief discussion of some natural generalisations of the soluble graph in Section \ref{s:gen} and we present a number of open problems in Section \ref{s:op}. 

\vs

\noindent \textbf{Acknowledgements.} We sincerely thank an anonymous referee for their very careful reading of an earlier version, which has helped us to improve the accuracy and presentation of the paper. Burness thanks the Department of Mathematics at the University of Padua for their generous hospitality during a research visit in autumn 2021.

\section{Preliminaries}\label{s:prel}

In this section we record some preliminary results, which will be useful in the proofs of our main theorems.

\subsection{The soluble graph}\label{ss:sg}

Let $G$ be a finite insoluble group and let $\Gamma_{\mathcal{S}}(G)$ be the soluble graph of $G$. As defined above, the vertices of this graph are the elements in $G \setminus R(G)$, where $R(G)$ is the soluble radical of $G$, and two distinct vertices $x$ and $y$ are adjacent (denoted $x \sim y$) if and only if $\la x,y \ra$ is soluble. Let $\delta(x,y)$ denote the distance between the two vertices $x$ and $y$ in this graph (if $x$ and $y$ are not connected by a path, then set $\delta(x,y) = \infty$). Then 
\[
\delta_{\mathcal{S}}(G) = \max\{ \delta(x,y) \,:\, x, y \in G \setminus R(G)\}
\]
is the diameter of $\Gamma_{\mathcal{S}}(G)$. For $x \in G \setminus R(G)$ and $\ell \in \mathbb{N}$ we also define
\[
B_{\ell}(x) = \{ y \in G \setminus R(G) \, :\, \delta(x,y) \leqs \ell \},
\]
which is the ball of radius $\ell$ in $\Gamma_{\mathcal{S}}(G)$, centred at $x$. 

\begin{rem}\label{r:useful}
Note that $\delta_{\mathcal{S}}(G) \geqs 3$ if and only if there exist elements $x,y \in G \setminus R(G)$ such that $B_1(x) \cap B_1(y)$ is empty. Also observe that $\delta_{\mathcal{S}}(G) \geqs 4$ if there exist $x,y \in G \setminus R(G)$ such that $\la a,b \ra$ is insoluble for all $a \in B_1(x)$, $b \in B_1(y)$.
\end{rem}

We begin by recording some preliminary observations. Since $R(G/R(G))=1$, the following elementary result allows us to focus our attention on the groups with $R(G)=1$.

\begin{lem}\label{l:rad}
Let $G$ be a finite insoluble group with soluble radical $R(G)$.
\begin{itemize}\addtolength{\itemsep}{0.2\baselineskip}
\item[{\rm (i)}] $\Gamma_{\mathcal{S}}(G)$ is connected if and only if $\Gamma_{\mathcal{S}}(G/R(G))$ is connected. 
\item[{\rm (ii)}] In addition, $\delta_{\mathcal{S}}(G)= \delta_{\mathcal{S}}(G/R(G))$.
\end{itemize}
\end{lem}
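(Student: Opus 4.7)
Write $\bar{G} = G/R(G)$ and let $\pi \colon G \to \bar{G}$ be the canonical projection; note that $\pi$ restricts to a surjection $G \setminus R(G) \to \bar{G} \setminus \{1\} = \bar{G} \setminus R(\bar{G})$, since $R(\bar{G}) = 1$. The plan is to compare the two graphs by lifting and projecting paths through $\pi$. The linchpin is a familiar closure fact for the class of soluble groups: for any subgroup $H \leq G$, $H$ is soluble if and only if $HR(G)/R(G)$ is soluble, because $H \cap R(G)$ is soluble (as a subgroup of $R(G)$) and $H/(H \cap R(G)) \cong HR(G)/R(G)$, so solubility of the quotient forces solubility of $H$ (and the converse is immediate). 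Applied to two-generator subgroups, this gives: $\la x,y\ra$ is soluble if and only if $\la \bar{x},\bar{y}\ra$ is soluble.

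From this I would extract two adjacency principles. First, if $x,y \in G \setminus R(G)$ are distinct vertices with $\bar{x} = \bar{y}$, then $\la x,y\ra \leqs \la x\ra R(G)$ projects onto the cyclic (hence soluble) subgroup $\la \bar{x}\ra$, so $\la x,y\ra$ is soluble and therefore $x \sim y$ in $\Gamma_{\mathcal{S}}(G)$. Second, if $\bar{x} \ne \bar{y}$, then $x \sim y$ in $\Gamma_{\mathcal{S}}(G)$ if and only if $\bar{x} \sim \bar{y}$ in $\Gamma_{\mathcal{S}}(\bar{G})$. Together these say that $\pi$ collapses each fibre to a clique and otherwise preserves adjacency.

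Now I would compare distances. Given a path $\bar{x} = \bar{v}_0, \bar{v}_1, \ldots, \bar{v}_k = \bar{y}$ in $\Gamma_{\mathcal{S}}(\bar{G})$, pick any preimages $v_i \in G \setminus R(G)$ for $1 \leqs i \leqs k-1$ (possible because each $\bar{v}_i \ne 1$), setting $v_0 = x$ and $v_k = y$; by the adjacency principles, $x, v_1, \ldots, v_{k-1}, y$ is a walk of length $k$ in $\Gamma_{\mathcal{S}}(G)$, so $\delta_G(x,y) \leqs \delta_{\bar{G}}(\bar{x},\bar{y})$. Conversely, any path $x = u_0, u_1, \ldots, u_k = y$ in $\Gamma_{\mathcal{S}}(G)$ maps to a walk in $\Gamma_{\mathcal{S}}(\bar{G})$ after deleting those $\bar{u}_i$ equal to their predecessor, and each retained consecutive pair is adjacent by the second principle; hence $\delta_{\bar{G}}(\bar{x},\bar{y}) \leqs \delta_G(x,y)$. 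Taking $\bar{x} \ne \bar{y}$ we obtain equality of distances, whereas pairs in a common fibre contribute distance $1$ in $\Gamma_{\mathcal{S}}(G)$.

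Part (i) then follows: $\Gamma_{\mathcal{S}}(G)$ is connected precisely when every pair $(x,y)$ has finite distance, which by the above is equivalent to $\Gamma_{\mathcal{S}}(\bar{G})$ being connected. For part (ii), assuming both graphs are connected (otherwise both diameters are $\infty$), the diameter of $\Gamma_{\mathcal{S}}(G)$ is the maximum over all pairs of $\delta_G(x,y)$, and this maximum equals $\delta_{\mathcal{S}}(\bar{G})$: fibre pairs contribute only $1$, while all other pairs reproduce every distance in $\Gamma_{\mathcal{S}}(\bar{G})$, and $\delta_{\mathcal{S}}(\bar{G}) \geqs 2$ by Thompson's theorem as mentioned in the introduction. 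There is no genuine obstacle here; the only point demanding care is the vertex-collapsing behaviour of $\pi$ when $R(G) \ne 1$, and this is precisely what the first adjacency principle handles.
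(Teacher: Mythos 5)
Your proposal is correct and follows essentially the same route as the paper: the key fact in both is that $\la x,y\ra$ is soluble if and only if $\la x,y\ra R(G)/R(G)$ is soluble, from which adjacency in $\Gamma_{\mathcal{S}}(G)$ corresponds to adjacency in $\Gamma_{\mathcal{S}}(G/R(G))$. The paper leaves the path-lifting, the fibre-clique observation and the bookkeeping for pairs with $xR(G)=yR(G)$ implicit, whereas you spell them out; this is just a more detailed write-up of the same argument.
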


\begin{proof}
Clearly, if $x,y \in G,$ then $\langle x, y\rangle$ is soluble if and only if $\langle x, y\rangle R(G)/R(G)$ is a soluble subgroup of $G/R(G)$. Consequently, $x$ and $y$ are adjacent in $\Gamma_{\mathcal{S}}(G)$ if
and only if $xR(G)$ and $yR(G)$ are adjacent in $\Gamma_{\mathcal{S}}(G/R(G)).$
\end{proof}

\begin{lem}\label{prodo}
Let $H,K$ be nontrivial finite groups with $R(H) = R(K) = 1$. 
\begin{itemize}\addtolength{\itemsep}{0.2\baselineskip}
\item[{\rm (i)}] We have $\delta_{\mathcal{S}}(H \times K) \leqs 3$.
\item[{\rm (ii)}] If $\delta_{\mathcal{S}}(H) = \delta_{\mathcal{S}}(K) = 2$, then $\delta_{\mathcal{S}}(H \times K) = 2$.
\item[{\rm (iii)}] If $\delta_{\mathcal{S}}(H) \geqs 3$ then $\delta_{\mathcal{S}}(H \wr S_2) \geqs 3$. 
\end{itemize}
\end{lem}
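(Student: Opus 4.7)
The plan is to handle the three parts in turn, exploiting the direct product structure for (i) and (ii) and the wreath product structure for (iii).

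For part (i), given distinct nontrivial $x = (h_1, k_1)$ and $y = (h_2, k_2)$ in $H \times K$, the subgroups $H \times 1$ and $1 \times K$ commute elementwise, so $(h, 1) \sim (1, k)$ whenever both are nontrivial (they generate an abelian, hence soluble, subgroup). When all four of $h_1, k_1, h_2, k_2$ are nontrivial, the chain $x \sim (h_1, 1) \sim (1, k_2) \sim y$ realises a path of length $3$; the degenerate cases in which some coordinate vanishes yield paths of length at most $2$ by similar direct constructions (for instance, if $h_1 = 1$ and $h_2 \neq 1$, then $x$ is joined to $y$ via the intermediate vertex $(h_2, 1)$).

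For part (ii), the key observation is that $\la (h, k), (h', k')\ra$ is soluble if and only if both $\la h, h'\ra \leqs H$ and $\la k, k'\ra \leqs K$ are soluble. Given distinct $x, y$ with $x \not\sim y$, I construct a common neighbour $(h, k)$ coordinatewise: using $\delta_{\mathcal{S}}(H) = 2$, one finds $h \in H$ with $\la h_1, h\ra$ and $\la h_2, h\ra$ both soluble (take $h$ to be an endpoint when $h_1 \sim h_2$, any neighbour when $h_1 = h_2$, or a common neighbour in $\Gamma_{\mathcal{S}}(H)$ otherwise, with $h = 1$ always a valid fallback since $\la 1, h'\ra$ is cyclic), and similarly $k \in K$. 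A small case analysis, using the flexibility to set one of $h, k$ to $1$ when convenient, ensures $(h, k)$ is nontrivial and distinct from both $x$ and $y$.

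For part (iii), fix $a, b \in H$ with $\delta_H(a, b) \geqs 3$, and consider $x = (a, b)$ and $y = (b, a)$ in $G = H \wr S_2 = (H \times H) \rtimes \la \sigma\ra$, where $\sigma$ is the involution swapping the two factors. Both coordinate projections of $\la x, y\ra$ equal $\la a, b\ra$, which is insoluble, so $x \not\sim y$. For any common neighbour $z = (c_1, c_2)$ in the base $B = H \times H$, the coordinatewise solubility of $\la x, z\ra$ and $\la y, z\ra$, combined with $\delta_H(a, b) \geqs 3$, forces $c_1 = c_2 = 1$, so $z = 1$ is not a vertex. For $z = (c_1, c_2)\sigma \in G \setminus B$, direct calculation gives $z x z^{-1} = (c_1 b c_1^{-1}, c_2 a c_2^{-1})$ and $z^2 = (c_1 c_2, c_2 c_1)$; the solubility of the projections of $\la x, z\ra \cap B = \la x, z x z^{-1}, z^2\ra$ and its counterpart for $y$, combined again with $\delta_H(a, b) \geqs 3$, forces $c_1 c_2 = 1$ (so $z$ is an involution) and leaves the two residual requirements that $\la a, c_1 b c_1^{-1}\ra$ and $\la b, c_1 a c_1^{-1}\ra$ be soluble in $H$. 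The hard part will be ruling out these residuals: I plan to additionally choose $a, b$ to lie in distinct $H$-conjugacy classes (which is available whenever $\delta_{\mathcal{S}}(H) \geqs 3$, e.g.\ by picking endpoints of differing orders), excluding the ``diagonal'' equality $c_1 b c_1^{-1} = a$; the residuals then read $a \sim c_1 b c_1^{-1}$ and $b \sim c_1 a c_1^{-1}$ in $\Gamma_{\mathcal{S}}(H)$, and exploiting that conjugation by $c_1$ is a graph automorphism of $\Gamma_{\mathcal{S}}(H)$, I expect to extract a common soluble-neighbour of $a$ and $b$ in $\Gamma_{\mathcal{S}}(H)$, contradicting $\delta_H(a, b) \geqs 3$ and hence yielding $\delta_{\mathcal{S}}(G) \geqs \delta_G(x, y) \geqs 3$.
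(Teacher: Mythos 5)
Parts (i) and (ii) of your proposal are correct and essentially identical to the paper's argument: the paper builds the same explicit paths $x \sim (h_1,1) \sim (1,k_2) \sim y$ (with the same degenerate-case adjustments) for (i), and for (ii) it uses exactly your observation that $\la (h,k),(h',k')\ra$ is soluble precisely when both coordinate projections are, taking a common neighbour of the form $(a,1)$ or $(1,b)$.

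Part (iii) is where the genuine gap lies, and you have flagged it yourself. Your reduction is sound up to the point where a putative common neighbour of $x=(a,b)$ and $y=(b,a)$ is forced to have the form $z=(g,g^{-1})\sigma$, leaving the residual conditions that $\la a, gbg^{-1}\ra$ and $\la b, gag^{-1}\ra$ be soluble. But these conditions do not contradict $\delta(a,b)\geqs 3$ in $\Gamma_{\mathcal{S}}(H)$: writing $\phi$ for the graph automorphism $h \mapsto ghg^{-1}$, they say $a \sim \phi(b)$ and $b \sim \phi(a)$, and applying $\phi^{\pm 1}$ merely reproduces these same two edges ($\phi^{-1}(a)\sim b$, and so on); no single vertex adjacent to both $a$ and $b$ emerges, because nothing forces $\la b, \phi(b)\ra$ or $\la a,\phi(a)\ra$ to be soluble. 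The auxiliary claim that a distance-$\geqs 3$ pair can always be chosen in distinct conjugacy classes is also unsubstantiated (in the paper's own witnessing examples, such as two $p$-cycles in $A_p$, the endpoints are conjugate). So the argument terminates with an unproved implication. For comparison, the paper instead takes the witnesses $a=(x,1)\sigma$ and $b=(y,1)\sigma$ outside the base group and plays a common neighbour $c=(r,s)\sigma^i$ off against $a^2=(x,x)$, $b^2=(y,y)$ and the product $ac=(xr^{-1},r)$, reducing to the solubility of $\la x,r\ra$ and $\la y,r\ra$ and hence to a common neighbour $r$ of $x$ and $y$; note, however, that this conclusion also requires $r \neq 1$, and one can check that the excluded element $c=\sigma$ satisfies $\la (x,1)\sigma, \sigma\ra = (\la x\ra \times \la x\ra){:}\la\sigma\ra$, which is soluble, so the degenerate case $r=1$ genuinely needs separate treatment there as well. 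In short: (i) and (ii) stand; (iii) is not yet a proof, and your choice of witnesses inside the base group (which at least rules out $\sigma$ as a common neighbour, since $\la a,b\ra$ is insoluble) still leaves the conjugated residual conditions unresolved.
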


\begin{proof}
First consider (i). Let $x=(h_1,k_1)$ and $y = (h_2,k_2)$ be nontrivial elements of $H\times K$. Without loss of generality, we may assume $h_1\neq 1.$ If $k_2\neq 1$, then 
\[
x = (h_1,k_1) \sim (h_1,1) \sim (1,k_2) \sim (h_2,k_2) = y
\]
is a path in $\Gamma_{\mathcal{S}}(H \times K)$. Now assume $k_2 = 1$, in which case $h_2 \ne 1$. If $k_1\neq 1,$ then 
\[
x = (h_1,k_1) \sim (1,k_1) \sim (h_2,1) \sim (h_2, k_2)=y
\]
is a path in $\Gamma_{\mathcal{S}}(H \times K)$. Finally, if $k_1=k_2=1$ then let $k \in K$ be any nontrivial element and observe that  
\[
x = (h_1,1) \sim (1,k) \sim (h_2,1)=y
\]
is a path of length $2$.

Next consider part (ii). Define $x,y \in H \times K$ as above, with $h_1 \ne 1$. If $h_2 \ne 1$ then there exists $a \in H$ such that $\la h_1,a\ra$ and $\la h_2,a\ra$ are soluble and we deduce that $x \sim (a,1) \sim y$. Similarly, if $h_2=1$ then $k_2 \ne 1$ and there exists $b \in K$ such that $\la k_1, b \ra$ and $\la k_2,b\ra$ are soluble. Therefore, $x \sim (1,b) \sim y$ and we conclude that $\delta_{\mathcal{S}}(H \times K) = 2$.

Finally, let us turn to (iii). Let $x,y \in H$ be nontrivial elements with $\delta(x,y) = 3$ in $\Gamma_{\mathcal{S}}(H)$ and consider 
$a=(x,1)\s$ and $b = (y,1)\s$ in $G = H \wr S_2$, where $\s = (1,2) \in S_2$. Suppose $c = (r,s)\s^i \in G$ is adjacent to $a$ and $b$ in $\Gamma_{\mathcal{S}}(G)$, where $r,s \in H$ and $i \in \{0,1\}$. Then $c$ is
also adjacent to $a^2 = (x,x)$ and $b^2=(y,y)$. If $i=0$ then either $r \ne 1$ and $x \sim r \sim y$ in $\Gamma_{\mathcal{S}}(H)$, or $s \ne 1$ and we have $x \sim s \sim y$. This is incompatible with the condition $\delta(x,y) = 3$. Now assume $i=1$. Here $\la x, rs\ra$ and $\la y, rs\ra$ are both soluble, so $rs=1$ is the only possibility and thus $c = (r,r^{-1})\s$. However, $ac = (x,1)\s \cdot (r,r^{-1})\s=(xr^{-1},r)$ is 
adjacent to $a^2 = (x,x)$ and thus $\la x,r\ra$ is soluble. Similarly, $\la y,r \ra$ is also soluble, so $x \sim r \sim y$ in $\Gamma_{\mathcal{S}}(H)$ and once again we have reached a contradiction. We conclude that $\delta(a,b) \geqs 3$ in $\Gamma_{\mathcal{S}}(G)$ and thus $\delta_{\mathcal{S}}(G) \geqs 3$.
\end{proof}

Recall that an element in a group is \emph{real} if it is conjugate to its inverse.

\begin{lem}\label{l:easy}
Let $G$ be a nontrivial finite group with $R(G)=1$. If $x,y \in G$ are nontrivial and $|N_G(\la x \ra)|$ and $|N_G(\la y \ra)|$ are even, then $\delta(x,y) \leqs 3$. In particular, if every element in $G$ is real, then $\delta_{\mathcal{S}}(G) \leqs 3$.
\end{lem}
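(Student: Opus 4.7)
The key observation is that an even-order normaliser $N_G(\langle x\rangle)$ must contain an involution, and this involution provides a short bridge into the clique of involutions.

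First I would handle the main claim. Given the hypothesis, pick involutions $t_x\in N_G(\langle x\rangle)$ and $t_y\in N_G(\langle y\rangle)$. Since $\langle x\rangle\trianglelefteq\langle x,t_x\rangle$ with index at most $2$, the subgroup $\langle x,t_x\rangle$ is soluble, and likewise for $\langle y,t_y\rangle$. Since $R(G)=1$, involutions are vertices of $\Gamma_{\mathcal{S}}(G)$. Provided $t_x\neq x$, this gives the edge $x\sim t_x$, and otherwise $x$ is itself an involution so we may simply take $t_x=x$; the analogous statement holds for $y$. Finally, any two distinct involutions generate a dihedral (hence soluble) group, so either $t_x=t_y$ or $t_x\sim t_y$. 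Concatenating yields a path $x\,\text{---}\,t_x\,\text{---}\,t_y\,\text{---}\,y$ of length at most $3$, possibly shorter if some of these vertices coincide.

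For the ``in particular'' clause, I would verify that if every element of $G$ is real, then $|N_G(\langle z\rangle)|$ is even for every nontrivial $z\in G$, after which the first part of the lemma delivers the conclusion $\delta_{\mathcal{S}}(G)\leqs 3$. If $|z|$ is even, then $\langle z\rangle\leqs N_G(\langle z\rangle)$ already contains an involution and we are done. If $|z|$ is odd with $|z|\geqs 3$, then reality provides some $g\in G$ with $g^{-1}zg=z^{-1}$; since $g$ sends $\langle z\rangle$ to itself, we have $g\in N_G(\langle z\rangle)$, and its image in the quotient $N_G(\langle z\rangle)/C_G(\langle z\rangle)\hookrightarrow\Aut(\langle z\rangle)$ is the inversion automorphism, which has order $2$ because $z\neq z^{-1}$. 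Hence the quotient has even order, and therefore so does $|N_G(\langle z\rangle)|$.

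I do not anticipate any real obstacles here: the content is essentially the soluble-quotient observation $\langle x\rangle\trianglelefteq\langle x,t_x\rangle$ together with the elementary fact that two involutions generate a dihedral group, and the only mildly delicate point is to keep track of the degenerate cases where $x$ or $y$ is itself an involution (so that $t_x=x$ or $t_y=y$ is allowed) and where the chosen involutions coincide, both of which simply shorten the path.
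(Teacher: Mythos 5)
Your proposal is correct and follows essentially the same route as the paper: the paper likewise notes that $x \sim z$ for every nontrivial $z \in N_G(\la x \ra)$ (for the same reason, namely that $\la x \ra$ is normal in $\la x,z\ra$ with cyclic quotient), picks involutions in the two normalisers, joins them via the dihedral group they generate, and deduces the ``in particular'' clause by observing that reality of $x$ forces $|N_G(\la x\ra)|$ to be even. Your handling of the degenerate cases and your N/C-theorem argument for the evenness of the normaliser are slightly more explicit than the paper's, but the substance is identical.
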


\begin{proof}
First observe that $x \sim z$ for all nontrivial $z \in N_G(\la x \ra)$. Therefore, if $|N_G(\la x \ra)|$ and $|N_G(\la y \ra)|$ are even, then there exist involutions $z,z' \in G$ such that $x \sim z$ and $y \sim z'$. Since $\la z, z'\ra$ is soluble, it follows that $x \sim z \sim z' \sim y$ is a path in $\Gamma_{\mathcal{S}}(G)$ and thus $\delta(x,y) \leqs 3$. Finally, note that if $x$ is real, then there exists an element $g \in G$ of even order such that $g^{-1}xg = x^{-1}$ and thus $|N_G(\la x \ra)|$ is even.  
\end{proof}

\begin{rem}\label{r:tz}
The main theorem of \cite{TZ} completely determines the finite quasisimple groups with the property that every element is real. By combining this result with Lemma \ref{l:easy}, we immediately deduce that $\delta_{\mathcal{S}}(G) \leqs 3$ when $G$ is one of the following finite simple classical groups:
\begin{itemize}\addtolength{\itemsep}{0.2\baselineskip}
\item[{\rm (a)}] ${\rm PSp}_{2m}(q)'$, where $m \geqs 1$ and $q \not\equiv 3 \imod{4}$.
\item[{\rm (b)}] $\O_{2m+1}(q)$, where $m \geqs 3$ and $q \equiv 1 \imod{4}$.
\item[{\rm (c)}] ${\rm P\O}_{4m}^{\e}(q)$, where $m \geqs 2$ (and $q \not\equiv 3 \imod{4}$ if $\e=+$ and $m \geqs 3$). 
\end{itemize}
\end{rem}

Let $G$ be a finite group and let $H$ be a core-free subgroup. Recall that the \emph{base size} for the natural action of $G$ on the set of cosets $G/H$, denoted $b(G,H)$ is defined to be the minimal number $b$ of conjugates $H^{g_1}, \ldots, H^{g_b}$ such that $\bigcap_iH^{g_i} = 1$. In particular, $b(G,H)=2$ if and only if $H \ne 1$ and $H \cap H^g=1$ for some $g \in G$. 

\begin{lem}\label{l:base}
Let $G$ be a nontrivial finite group with $R(G)=1$ and suppose there exists a nontrivial element $x \in G$ that is contained in a unique maximal subgroup $H$ of $G$. If $H$ is core-free and $b(G,H) = 2$, then $\delta_{\mathcal{S}}(G) \geqs 3$.
\end{lem}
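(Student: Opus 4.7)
The plan is to exhibit two specific vertices of $\Gamma_{\mathcal{S}}(G)$ whose neighbourhoods in the soluble graph are disjoint, which by Remark \ref{r:useful} forces $\delta_{\mathcal{S}}(G)\geqs 3$. The natural candidates are $x$ itself and a suitable conjugate $y = x^g$. Since $b(G,H) = 2$, we may choose $g \in G$ with $H \cap H^g = 1$. Conjugation permutes the maximal subgroups of $G$, so $y = x^g$ lies in the unique maximal subgroup $H^g$, and $H\cap H^g =1$ combined with $x \in H$ prevents $x = y$ (note also that $R(G)=1$ forces $G$ to be insoluble, so both $x$ and $y$ are genuine vertices of $\Gamma_{\mathcal{S}}(G)$).

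The key step is the following containment. First I would observe that $N_G(\la x \ra)$ is a proper subgroup of $G$: otherwise $\la x \ra$ would be a nontrivial normal soluble subgroup, contradicting $R(G)=1$. Hence $N_G(\la x \ra)$ lies in some maximal subgroup of $G$ which, as it contains $x$, must be $H$ itself. (We do not in fact need this observation to run the argument, but it is the mechanism ensuring $x\neq y$ in pathological cases, via $C_G(x)\leqs H$.) The central claim is then that any common neighbour $z$ of $x$ and $y$ in $\Gamma_{\mathcal{S}}(G)$ must lie in $H \cap H^g$.

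To establish this, suppose $z \in B_1(x) \cap B_1(y)$, so that $z\ne 1$ and the subgroups $\la x,z\ra$ and $\la y,z\ra$ are both soluble. Since $G$ is insoluble, $\la x,z\ra$ is a proper subgroup of $G$ and is therefore contained in a maximal subgroup of $G$; as it contains $x$, and $H$ is the unique maximal subgroup with this property, we conclude $\la x,z\ra \leqs H$, hence $z\in H$. The identical argument applied to $y=x^g$, using that $H^g$ is the unique maximal subgroup containing $y$, gives $z \in H^g$. Therefore $z \in H \cap H^g = 1$, contradicting $z\ne 1$.

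Consequently $B_1(x)\cap B_1(y)=\emptyset$, so $\delta(x,y)\geqs 3$, and the lemma follows. The argument is essentially structural and there is no real obstacle; the only mild subtlety is verifying that $y$ is a distinct vertex from $x$, which is handled by the observation that $C_G(x)\leqs N_G(\la x \ra)\leqs H$, so $x=x^g$ would force $H^g = H$ and hence $H\cap H^g\neq 1$.
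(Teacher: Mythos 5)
Your proposal is correct and follows essentially the same route as the paper: both show that every neighbour of $x$ in $\Gamma_{\mathcal{S}}(G)$ lies in $H$ (since any proper soluble overgroup of $x$ sits inside the unique maximal overgroup $H$), so $B_1(x)\subseteq H$, $B_1(x^g)\subseteq H^g$, and the choice of $g$ with $H\cap H^g=1$ forces $B_1(x)\cap B_1(x^g)=\emptyset$ and hence $\delta(x,x^g)\geqs 3$. Your additional check that $x\neq x^g$ is a small point the paper leaves implicit, but it does not change the argument.
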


\begin{proof}
First observe that if $y \in G \setminus H$ then $\la x,y \ra$ is equal to $G$, which is insoluble, and thus $B_1(x) \subseteq H$. Since $b(G,H) = 2$, there exists $g \in G$ such that $H \cap H^g = 1$, which implies that $B_1(x) \cap B_1(x^g)$ is empty. Therefore, $\delta(x,x^g) \geqs 3$ and the result follows.  
\end{proof}

\begin{ex}\label{e:mon}
In order to demonstrate the utility of the previous lemma, let $G = \mathbb{M}$ be the Monster sporadic simple group and let $x \in G$ be an element of order $59$. As explained in the proof of \cite[Theorem 4.1]{BH}, $x$ is contained in a unique maximal subgroup $H = {\rm L}_{2}(59)$ of $G$ and by applying the main theorem of \cite{BOW} we deduce that $b(G,H) = 2$. Therefore, Lemma \ref{l:base} implies that 
$\delta_{\mathcal{S}}(G) \geqs 3$.
\end{ex}

\subsection{Computational methods}\label{ss:comp}

In this section we discuss the computational methods that play a key role in the proofs of our main results. All of our computations are performed using {\sc Magma} (version 2.26-6) \cite{magma} and we are mainly interested in the case where $G$ is almost simple. So for the remainder of this section, let us assume $G$ is an almost simple group that is amenable to direct computations in {\sc Magma}.

Let $x \in G$ be nontrivial and suppose we seek to determine $B_{\ell}(x)$, the ball of radius $\ell$ in $\Gamma_{\mathcal{S}}(G)$, centred at $x$. To do this, we first use the command \texttt{AutomorphismGroupSimpleGroup} to construct $G$ as a permutation group and we then identify $x$ (up to conjugacy) as an element of $G$ in this representation (for example, via the \texttt{ConjugacyClasses} command, or by random search). Next we use the function \texttt{SolubleSubgroups} to determine a set of representatives of the conjugacy classes of soluble subgroups of $G$ with order divisible by $|x|$. Given such a subgroup $H$, we then compute the number $n$ of conjugates of $H$ containing $x$, noting that 
\[
n = \frac{|x^G \cap H|}{|x^G|}\cdot |G: N_G(H)|.
\]
From here, we can then construct all the soluble overgroups of $x$ and by taking the union of these subgroups, excluding the identity element, we return $B_1(x)$. 

Similarly, in order to construct $B_2(x)$ we first determine a set of representatives $x_1, \ldots, x_k$ of the distinct conjugacy classes in $G$ that meet $B_1(x)$. Set $y = x_1$. As above, we construct $B_1(y)$ and then for each conjugate $y^g \in B_1(x)$ we obtain $B_1(y^g) = B_1(y)^g$ and we take the union of these sets. We can now construct $B_2(x)$ by repeating this process for $x_2, \ldots, x_k$ and this approach can be extended to give $B_{\ell}(x)$ for any $\ell$. 

\begin{ex}\label{e:m23}
Let $G$ be the Mathieu group ${\rm M}_{23}$ and let $G^{\#}$ be the set of nontrivial elements in $G$. We claim that $\delta_{\mathcal{S}}(G)=4$.

Here the function \texttt{AutomorphismGroupSimpleGroup} returns $G$ in its natural permutation representation of degree $23$. First we use \texttt{ConjugacyClasses} to determine a set of representatives of the conjugacy classes of $G$. Then for each representative $x$, we compute $|N_G(\la x \ra)|$, which is even unless $|x| \in \{11,23\}$. If we fix an element $x$ of order $23$, then $B_1(x)$ coincides with the nontrivial elements in $N_G(\la x \ra) = 23{:}11$ and by random search we can find a conjugate $y = x^g$ such that $\la a, b \ra$ is insoluble for all $a \in B_1(x)$, $b \in B_1(y)$. This implies that $\delta(x,y) \geqs 4$ and thus $\delta_{\mathcal{S}}(G) \geqs 4$. Therefore, in order to conclude that $\delta_{\mathcal{S}}(G)=4$ it suffices to show that $B_4(x) = G^{\#}$ when $|x| \in \{11,23\}$ (see Lemma \ref{l:easy}). 

First assume $|x|=11$. By applying the procedure described above, we compute 
\[
|B_1(x)| = 1264, \; |B_2(x)| = 135629, \; |B_3(x)| = 9540519.
\]
We now  implement the following exhaustive process in order to show that $B_4(x) = G^{\#}$. First we express each element $y \in B_3(x)$ in the form $x_i^{g}$, where $x_1, \ldots, x_k$ represent the distinct conjugacy classes in $G$ that meet $B_3(x)$. This allows us to express $B_3(x)$ as a disjoint union 
\[
B_3(x) = \bigcup_{i=1}^k \{ x_i^{g} \, :\, g \in T_i \},
\]
where the elements $x_i$ and the sets $T_i$ are explicitly determined.
Then starting with $Y = G^{\#}$, we choose $y \in Y$ at random and then we use random search to find $i \in \{1, \ldots, k\}$ and $g \in T_i$ such that $\la x_i^g, y\ra$ is soluble. It follows that $\la x_i^h , y^{g^{-1}h} \ra$ is soluble for all $h \in T_i$ and so we redefine $Y$ by removing all elements of the form $y^{g^{-1}h}$ with $h \in T_i$. We now repeat the process, which eventually terminates when $Y$ reaches the empty set. This allows us to conclude that each $y \in G^{\#}$ is adjacent to an element in $B_3(x)$ and thus $B_4(x) = G^{\#}$ as claimed. An entirely similar argument applies when $|x|=23$, noting that 
\[
|B_1(x)| = 252, \; |B_2(x)| = 23528, \; |B_3(x)| = 1858031.
\]
\end{ex}

\begin{ex}\label{e:m24}
There are variations of the computational approach outlined in Example \ref{e:m23}, which can be more efficient for certain groups. Specifically, the approach presented below does not require the construction of any balls of radius $3$, which can be expensive in terms of time and memory. 

For example, suppose we seek to show that $\delta_{\mathcal{S}}(G) = 4$ when $G$ is the largest Mathieu group ${\rm M}_{24}$. Here we work with the natural permutation representation of degree $24$ and we first check that if $x \in G$ and $|x| \ne 23$ then $|N_G(\la x \ra)|$ is even. Fix $x \in G$ of order $23$. By random search we can find an element $g \in G$ such that $\la a,b \ra$ is insoluble for all $a \in B_1(x)$, $b \in B_1(x^g)$ and thus $\delta(x,x^g) \geqs 4$. Therefore, in order to conclude that $\delta_{\mathcal{S}}(G) = 4$ it suffices to show that any two elements of order $23$ are connected by a path of length at most $4$. 

To do this, we can proceed as follows. Let $Y$ be the set of elements of order $23$ in $G$, so $|Y| = 21288960$ and $Y$ is a union of two conjugacy classes, labelled \texttt{23A} and \texttt{23B}. Fix an element $x \in \texttt{23A}$ and construct $B_2(x)$ as above (we find that $|B_1(x)| = 252$ and $|B_2(x)| = 50093$). We now initiate the following process. Choose $y \in Y$ at random and construct $B_1(y)$. By random search, find 
$a \in B_2(x)$ and $b \in B_1(y)$ such that $\la a,b \ra$ is soluble. It follows that $\delta(x,z) \leqs 4$ for all 
\[
z \in \{(y^g)^m \, :\, g \in C_G(x), \, 1 \leqs m \leqs 22\}.
\]
We now remove this subset from $Y$ and we continue to repeat this process until $Y$ is empty. This allows us to conclude that $B_4(x) = G^{\#}$ for all $x \in \texttt{23A}$. Finally,  since the Sylow $23$-subgroups of $G$ are cyclic, we immediately deduce that the same conclusion holds if $x \in \texttt{23B}$.
\end{ex}

\section{A reduction theorem}\label{s:red}

In this section, we reduce the proofs of Theorems \ref{t:main2} and \ref{t:main3} to almost simple groups.

\begin{lem}\label{mono}
Let $G$ be a finite group with $R(G)=1$ and socle $N_1\times \cdots \times N_k$, where each $N_i$ is a non-abelian minimal normal subgroup of $G$. If $k \geqs 2$ then $\Gamma_{\mathcal{S}}(G)$ is connected and $\delta_{\mathcal{S}}(G) \leqs 3$.
\end{lem}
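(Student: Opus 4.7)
The plan is, for any two nontrivial vertices $x, y \in G \setminus R(G) = G \setminus \{1\}$, to exhibit a path of length at most $3$ in $\Gamma_{\mathcal{S}}(G)$ joining them, from which both the connectedness of $\Gamma_{\mathcal{S}}(G)$ and the bound $\delta_{\mathcal{S}}(G) \leq 3$ follow at once. The main structural ingredient is that $[N_i, N_j] = 1$ whenever $i \neq j$, so elements drawn from distinct $N_i$ automatically commute and therefore generate abelian (hence soluble) subgroups.

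First I would establish the following sub-claim: for every nontrivial $z \in G$ and every index $i$, the centraliser $C_{N_i}(z)$ is nontrivial. If $z$ centralises $N_i$ this is immediate, since $C_{N_i}(z) = N_i$. Otherwise $z$ induces a nontrivial automorphism $\phi$ of $N_i \cong T^m$ (where $T$ is a non-abelian finite simple group), and I would invoke the consequence of the classification of finite simple groups that every nontrivial automorphism of a non-abelian finite simple group has a nontrivial fixed point in the group. A short analysis of the cycle structure of the permutation induced by $\phi$ on the $m$ simple factors of $N_i$ extends this fact from $T$ to $N_i$: within each $\phi$-orbit on the factors, the fixed-point equation reduces to finding a nontrivial element of $T$ fixed by an appropriate product of the relevant component automorphisms.

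Granting the sub-claim, I would run a case analysis on the positions of $x, y$ relative to the socle. (i) If $x, y$ both lie in a common $N_i$, pick any nontrivial $a \in N_j$ with $j \neq i$ (possible because $k \geq 2$); since $[a,x] = [a,y] = 1$, we obtain $x \sim a \sim y$. (ii) If $x \in N_i$ and $y \in N_j$ with $i \neq j$, then $[x, y] = 1$ directly. (iii) If exactly one of $x, y$, say $x$, lies in some $N_i$ and the other lies in no $N_\ell$, take $j \neq i$ and use the sub-claim to pick a nontrivial $b \in C_{N_j}(y)$; then $x \sim b \sim y$. (iv) If neither of $x, y$ lies in any $N_\ell$, choose distinct indices $i, j$ and, via the sub-claim, nontrivial $a \in C_{N_i}(x)$ and $b \in C_{N_j}(y)$; the length-three path $x \sim a \sim b \sim y$ finishes the job. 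Distinctness of consecutive vertices along each path is automatic: $a \in N_i$ together with $x \notin N_i$ forces $a \neq x$, and $a \in N_i, b \in N_j$ with $i \neq j$ and both nontrivial forces $a \neq b$.

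The main obstacle is the fixed-point sub-claim, which ultimately depends on the CFSG-based fact that every nontrivial automorphism of a non-abelian finite simple group centralises a nontrivial element. Granted this input, the remainder of the argument is a short combinatorial check built around the pairwise commutation of the $N_i$.
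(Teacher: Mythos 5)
Your proposal is correct and follows essentially the same route as the paper: the paper's proof also rests on the fact (cited there as \cite[Theorem 1.48]{gor}) that every nontrivial $x\in G$ centralises a nontrivial element $s\in N_1$ and a nontrivial element $t\in N_2$, and then uses the path $x\sim s\sim t\sim y$, which subsumes your case analysis. The only cosmetic difference is that you re-derive the fixed-point sub-claim from the CFSG statement about automorphisms of simple groups rather than citing it directly.
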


\begin{proof}
Let $x,y \in G$ be distinct nontrivial elements. Then there exist nontrivial elements $s \in N_1$, $t \in N_2$ such that $[x,s]=[y,t]=1$ (see \cite[Theorem 1.48]{gor}, for example) and we deduce that $x \sim s \sim t \sim y$ is a path in $\Gamma_{\mathcal{S}}(G)$.
\end{proof}

\begin{lem}\label{l:d2}
Let $G = T^k$, where $T$ is a non-abelian simple group and $k \geqs 2$. Let 
\[
x = (x_1, \ldots, x_k), \;\; y = (y_1, \ldots, y_k)
\]
be distinct nontrivial elements of $G$ with $x_1 = 1$. Then $\delta(x,y) \leqs 2$.
\end{lem}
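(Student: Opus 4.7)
The plan is to produce a common neighbour $z$ of $x$ and $y$ in $\Gamma_{\mathcal{S}}(G)$, chosen so that its support is concentrated on the first coordinate, where the triviality of $x_1$ can be exploited. Concretely, any element of the form $z = (t,1,\ldots,1)$ commutes with $x$ componentwise, since in each coordinate at least one of the two entries is trivial; therefore $\langle x,z\rangle$ is automatically abelian. The remaining task is to pick $t$ so that $\langle y,z\rangle$ is soluble as well.

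My proposed choice is to take $t = y_1$ when $y_1 \ne 1$, and to let $t$ be any nontrivial element of $T$ when $y_1 = 1$. A direct coordinatewise check shows that $y$ and $z$ commute: if $y_1 = 1$ then their supports are disjoint, while if $y_1 \ne 1$ the first coordinates both equal $y_1$ (which commutes with itself) and the remaining coordinates of $z$ are trivial. So $\langle y,z\rangle$ is also abelian, hence soluble.

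It then only remains to confirm that $z$ is a legitimate vertex and that the two relations above give a path. Since $t \ne 1$, we have $z \ne 1$, and $z \ne x$ because $z_1 = t \ne 1 = x_1$. The only corner case is $z = y$, which occurs precisely when $y = (y_1,1,\ldots,1)$ with $y_1 \ne 1$; in that situation $\langle x,y\rangle = \langle x,z\rangle$ is abelian, so $x$ and $y$ are already adjacent and $\delta(x,y) = 1$. In every other case $x \sim z \sim y$ is a path of length $2$, yielding $\delta(x,y) \le 2$. I do not anticipate a genuine obstacle here: the argument is elementary, relying solely on the fact that coordinates with a trivial entry commute with arbitrary entries in $T^k$.
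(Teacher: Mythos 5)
Your proposal is correct and follows essentially the same route as the paper: both arguments take the common neighbour $z=(y_1,1,\ldots,1)$ when $y_1\neq 1$ and $z=(t,1,\ldots,1)$ with $t\neq 1$ arbitrary when $y_1=1$, observing that the resulting two-generated subgroups are abelian. Your additional handling of the degenerate case $z=y$ is a minor refinement the paper leaves implicit.
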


\begin{proof}
If $y_1\neq 1,$ then $x \sim (y_1,1,\ldots,1) \sim y$ is a path in $\Gamma_{\mathcal{S}}(G)$. Similarly, if $y_1=1$ then we may consider the path $x \sim (t,1,\ldots,1)\sim y$, where $t \in T$ is an arbitrary nontrivial element. 
\end{proof} 

We are now ready to establish the main result of this section.

\begin{thm}\label{t:nas}
Let $G$ be a finite insoluble group. If $G$ is not almost simple, then $\Gamma_{\mathcal{S}}(G)$ is connected and $\delta_{\mathcal{S}}(G) \leqs 3$.
\end{thm}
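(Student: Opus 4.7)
The plan is to first invoke Lemma~\ref{l:rad} to reduce to the case $R(G)=1$, so that the socle $N$ of $G$ is a direct product of non-abelian minimal normal subgroups. If there are at least two such summands, then Lemma~\ref{mono} immediately yields $\delta_{\mathcal{S}}(G) \leqs 3$, so it remains to handle the case where $N$ is a single minimal normal subgroup. In this case $N = T^m$ for some non-abelian finite simple group $T$, and since $G$ is not almost simple we must have $m \geqs 2$. The conjugation action of $G$ on the $m$ simple factors of $N$ induces a homomorphism $\pi \colon G \to S_m$ with transitive image.

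For nontrivial $x,y \in G$, my aim is to construct a path $x \sim u \sim v \sim y$ of length $3$ by choosing $u,v \in N$ supported on two distinct coordinates $i \neq j$ in $\{1,\ldots,m\}$. Such $u$ and $v$ have disjoint supports in the direct product, so $\langle u, v \rangle$ is abelian automatically; it then remains to force $\langle x,u \rangle$ and $\langle y,v \rangle$ to be soluble. The key idea is to choose the single nontrivial entry $u_i$ of $u$ inside the centraliser $C_T(\psi) \setminus \{1\}$, where $\psi \in \Aut(T)$ is the automorphism through which $x^{|O|}$ acts on the $i$-th factor and $O$ is the $\pi(x)$-orbit of $i$. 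Under this holonomy condition the $\langle x \rangle$-conjugates of $u$ are supported on the $|O|$ distinct coordinates of $O$ (so they pairwise commute) and there are exactly $|O|$ of them, generating an abelian normal subgroup of $\langle x, u \rangle$ with cyclic quotient. Thus $\langle x, u \rangle$ is metabelian, hence soluble, and the analogous construction applied to $y$ makes $\langle y, v \rangle$ soluble.

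The principal obstacle is the nontriviality of $C_T(\psi)$, so that the desired entries $u_i$ and $v_j$ exist. For this I would invoke the standard fact that every automorphism of a non-abelian finite simple group has nontrivial centraliser in the group; equivalently, writing $C_N(g) \cong \prod_O C_T(\psi_O)$ (with $O$ ranging over the $\pi(g)$-orbits), the cited result \cite[Theorem~1.48]{gor} that $C_N(g) \neq 1$ forces at least one factor to be nontrivial, and it is classical that in fact every factor is nontrivial. Granted this, the condition $m \geqs 2$ provides the freedom to choose distinct coordinates $i \neq j$, yielding the path $x \sim u \sim v \sim y$ and establishing $\delta_{\mathcal{S}}(G) \leqs 3$.
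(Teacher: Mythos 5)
Your proposal is correct and follows essentially the same route as the paper: reduce via Lemmas \ref{l:rad} and \ref{mono} to the monolithic case $N = T^m$ with $m \geqs 2$, then attach each of $x$ and $y$ to a nontrivial element of $N$ supported on a single coordinate whose entry centralises the holonomy automorphism of the relevant $\pi$-cycle, so that the $\langle x \rangle$-conjugates generate an abelian normal subgroup and $\langle x,u \rangle$ is metabelian, the existence of such an entry being exactly the paper's appeal to \cite[Theorem 1.48]{gor}. The only cosmetic difference is that you place $u$ and $v$ on distinct coordinates and join them directly, whereas the paper fixes coordinates $1$ and $2$, splits off the case $y \in N$ and invokes Lemma \ref{l:d2}.
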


\begin{proof}
In view of Lemmas \ref{l:rad} and \ref{mono}, we may assume $G$ is monolithic with socle $N = T^k$, where $T$ is a non-abelian simple group and $k \geqs 2$. Let us identify $G$ with a subgroup of ${\rm Aut}(N) = {\rm Aut}(T) \wr S_k$ and let $x,y \in G$ be nontrivial elements, say $x = (x_1, \ldots, x_k)\sigma$ with $x_i \in {\rm Aut}(T)$ and $\sigma \in S_k$. Write $\sigma= \sigma_1\cdots \sigma_d$ as a product of disjoint cycles (including cycles of length $1$). We may assume $\sigma_1=(1,2,\ldots,u)$ with $u \geqs 1$. Set $a = x_1 \cdots x_u \in {\rm Aut}(T)$ and define $z = (t, 1, \ldots, 1) \in N$, where $1 \ne t \in C_T(a)$ (the existence of $t$ follows from \cite[Theorem 1.48]{gor}). Next observe that 
\[
Y = \la z, z^{x}, \ldots, z^{x^{u-1}} \ra \leqs \la z \ra \times \la z^x \ra \times \cdots \times \la z^{x^{u-1}} \ra
\]
is an abelian group normalised by $\la x \ra$, so $\la Y, x \ra$ is a soluble subgroup of $G$ containing $x$ and $z$, whence $\la x,z \ra$ is soluble. 

Now, if $y \in N$ then $\delta(z,y) \leqs 2$ by Lemma \ref{l:d2} and therefore $\delta(x,y) \leqs 3$. If $y \notin N,$ then as above we may construct a nontrivial element $z' \in N$ of the form $z' = (1,t',1,\ldots,1)$ for a suitable $t' \in T$ with $z' \sim y$. This gives a path $x \sim z \sim z' \sim y$ and the proof is complete. 
\end{proof}

\begin{cor}\label{c:nas}
Fix $\ell \in \{2,3\}$. Then there are infinitely many insoluble groups $G$ that are not almost simple with $\delta_{\mathcal{S}}(G) = \ell$.
\end{cor}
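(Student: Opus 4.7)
The plan is to exhibit two explicit infinite families, one for each value of $\ell$, using the $2$-dimensional linear groups as building blocks. Throughout, I will take $H = {\rm L}_2(q)$ with $q \geqs 8$, which by Theorem \ref{t:main2}(ii)(a) satisfies $\delta_{\mathcal{S}}(H) = 2$ when $q$ is even (since then ${\rm PGL}_2(q) = {\rm L}_2(q) \leqs H$) and $\delta_{\mathcal{S}}(H) = 3$ when $q$ is odd. In both constructions, the resulting group will have socle $H \times H$, a direct product of two copies of a non-abelian simple group, and hence will not be almost simple.

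For $\ell = 2$, I will set $G = H \times H$ with $q \geqs 8$ even. Then $G$ is insoluble and not almost simple, and Lemma \ref{prodo}(ii) applied with both factors equal to $H$ yields $\delta_{\mathcal{S}}(G) = 2$. Since there are infinitely many even prime powers $q \geqs 8$, this produces the required infinite family.

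For $\ell = 3$, I will set $G = H \wr S_2$ with $q \geqs 8$ odd, so that $\delta_{\mathcal{S}}(H) = 3$. Again $G$ is insoluble with socle $H \times H$ (two factors), hence not almost simple. Lemma \ref{prodo}(iii) gives $\delta_{\mathcal{S}}(G) \geqs 3$, while Theorem \ref{t:nas} gives $\delta_{\mathcal{S}}(G) \leqs 3$. Combining these yields $\delta_{\mathcal{S}}(G) = 3$, and letting $q$ range over odd prime powers $\geqs 8$ produces infinitely many examples.

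There is no real obstacle here: the statement is essentially a repackaging of Lemma \ref{prodo}(ii),(iii), Theorem \ref{t:nas}, and the ${\rm L}_2(q)$ case of Theorem \ref{t:main2}. The only minor point to verify is that direct products and wreath products of two copies of a non-abelian simple group are never almost simple, which is immediate from the definition since their socle has two non-abelian simple minimal normal subgroups rather than a unique one.
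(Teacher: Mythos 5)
Your proof is correct and follows essentially the same route as the paper: the paper likewise takes a simple group $T$ with $\delta_{\mathcal{S}}(T)=\ell$ (supplied by Theorem \ref{t:main2}(ii)), uses Lemma \ref{prodo}(ii) on $T\times T$ for $\ell=2$, and combines Lemma \ref{prodo}(iii) with Theorem \ref{t:nas} on $T\wr S_2$ for $\ell=3$; you have merely instantiated $T={\rm L}_2(q)$. One pedantic remark: for $H\wr S_2$ the socle $H\times H$ is a \emph{single} minimal normal subgroup (the top $S_2$ fuses the two factors), so the right reason it is not almost simple is that its socle is not simple, not that it has two minimal normal subgroups.
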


\begin{proof}
This follows by combining Theorem \ref{t:nas} with Lemma \ref{prodo} and part (ii) of Theorem \ref{t:main2}. First observe that the latter result shows that there are infinitely many finite simple groups $T$ with $\delta_{\mathcal{S}}(T) = \ell$. If $\delta_{\mathcal{S}}(T) =2$ then Lemma \ref{prodo}(ii) implies that $\delta_{\mathcal{S}}(T \times T) = 2$. On the other hand, if $\delta_{\mathcal{S}}(T) =3$ then Lemma \ref{prodo}(iii) yields $\delta_{\mathcal{S}}(T \wr S_2) \geqs 3$, which means that $\delta_{\mathcal{S}}(T \wr S_2)=3$ by Theorem \ref{t:nas}. 
\end{proof}

\begin{rem}\label{r:mon}
Let $G$ be a finite monolithic group with socle $T^k$ for some non-abelian simple group $T$ and positive integer $k \geqs 2$. Here we record the fact that there are examples with 
$\delta_{\mathcal{S}}(G) = \delta_{\mathcal{S}}(T) \pm 1$.
\begin{itemize}\addtolength{\itemsep}{0.2\baselineskip}
\item[{\rm (a)}] Theorem \ref{t:main2} demonstrates the existence of simple groups $T$ with $\delta_{\mathcal{S}}(T) = 4$, while the bounds in Lemma \ref{prodo}(iii) and Theorem \ref{t:nas} show that 
$\delta_{\mathcal{S}}(T \wr S_2) = 3$.
\item[{\rm (b)}] Suppose $T = {\rm U}_4(2)$, $H = {\rm U}_{4}(2).2$ and $G = H \wr S_2$. Here one can check that $\delta_{\mathcal{S}}(T) = 2$ and $\delta_{\mathcal{S}}(H) = 3$, whence $\delta_{\mathcal{S}}(G) = 3$ by combining Lemma \ref{prodo}(iii) with Theorem \ref{t:nas}.
\end{itemize}
\end{rem}

We can also reduce the proof of Theorem \ref{t:main3} to almost simple groups.

\begin{prop}\label{p:nas}
Let $G$ be a finite insoluble group with $R(G)=1$ that is not almost simple and let $x \in G$ be nontrivial. Then there exists an involution $y \in G$ with $\delta(x,y) \leqs 2$. 
\end{prop}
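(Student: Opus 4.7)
The plan is to refine the proof of Theorem~\ref{t:nas} by observing that the intermediate element constructed there can be chosen inside the socle, where a second direct factor (or a distinct minimal normal subgroup) is readily available to supply a nearby involution.

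Since $R(G) = 1$, every minimal normal subgroup of $G$ is non-abelian. I would split the argument into two cases based on the number of minimal normal subgroups. First, suppose $G$ has two distinct (necessarily non-abelian) minimal normal subgroups $N_1$ and $N_2$. Proceeding as in the proof of Lemma~\ref{mono}, \cite[Theorem 1.48]{gor} provides a nontrivial $s \in N_1$ with $[x,s] = 1$, so $\la x,s\ra$ is abelian and $x \sim s$. Since $N_1 \cap N_2 = 1$ and both subgroups are normal in $G$, we have $[N_1, N_2] = 1$; by the Feit--Thompson theorem, $N_2$ contains an involution $t$. Then $s$ and $t$ commute, so $x \sim s \sim t$ is a path of length $2$ ending at the involution $t$.

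The remaining case is $G$ monolithic with socle $N = T^k$ for a non-abelian simple group $T$; since $G$ is not almost simple, we have $k \geqs 2$. Here I would apply the construction in the proof of Theorem~\ref{t:nas} verbatim: writing $x = (x_1, \ldots, x_k)\sigma$ with $\sigma \in S_k$ having first cycle $\sigma_1 = (1,2,\ldots,u)$, the element $z = (t,1,\ldots,1) \in N$ with $1 \ne t \in C_T(x_1 \cdots x_u)$ satisfies $x \sim z$. By Feit--Thompson, $T$ contains an involution $t_1$, and since $k \geqs 2$ we may set $y = (1,t_1,1,\ldots,1) \in N$. Then $\la z, y\ra \leqs \la t\ra \times \la t_1\ra$ is abelian, hence soluble, so $z \sim y$, and $y$ is an involution in $G$. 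This gives $x \sim z \sim y$ and therefore $\delta(x,y) \leqs 2$.

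I do not anticipate any serious obstacle; the argument is essentially a strengthening of Theorem~\ref{t:nas} that exploits the fact that the auxiliary element $z$ lies in the socle, where a disjoint coordinate is always available for building an adjacent involution. The only trivial points to verify are that $y \ne 1$ and that $y \ne z$ (both immediate from disjoint supports in the direct product), together with the fact that the permutation $\sigma$ always has at least one cycle, so the construction of $z$ is well defined.
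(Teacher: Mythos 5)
Your proof is correct and follows essentially the same route as the paper: the auxiliary element adjacent to $x$ is taken inside a minimal normal subgroup (via the construction in the proof of Theorem~\ref{t:nas}, or via \cite[Theorem 1.48]{gor} in the non-monolithic case), and an adjacent involution is then produced in a disjoint coordinate of $T^k$ or in a second minimal normal subgroup, using Feit--Thompson. The paper merely organises the two cases by the value of $k$ rather than by the number of minimal normal subgroups, but the content is identical.
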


\begin{proof}
Let $N = T^k$ be a minimal normal subgroup of $G$, where $T$ is a finite non-abelian simple group and $k$ is a positive integer. It follows from the proof of Theorem \ref{t:nas} that there exists $1\neq t \in T$ such that $x$ is adjacent to $n=(t,1,\ldots, 1)\in N.$ If $k\geq 2,$ then $n$ is adjacent to $(1,u,\ldots,u)\in N$, where $u \in T$ is an arbitrary involution. And if $k=1$ then $G$ contains another minimal normal subgroup $M \neq N$ and $n$ is adjacent to every involution in $M$.
\end{proof}

We close this section with the following result, which we will use to show that the Mathieu group ${\rm M}_{12}$ is the smallest finite group $G$ with $\delta_{\mathcal{S}}(G)>3$ (see Corollary \ref{c:m12}).

\begin{prop}\label{p:small}
Let $G$ be a finite insoluble group with $|G|<|{\rm M}_{12}|$. Then $\delta_{\mathcal{S}}(G) \leqs 3$.
\end{prop}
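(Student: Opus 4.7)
The plan is to reduce to an explicit finite list of almost simple groups and then to verify $\delta_\mathcal{S}(G) \leqs 3$ in each remaining case. By Lemma \ref{l:rad} we may assume $R(G) = 1$, and by Theorem \ref{t:nas} we may further assume $G$ is almost simple with socle $T$. Since $|T| \leqs |G| < |\mathrm{M}_{12}| = 95040$, the socle $T$ is one of exactly thirty non-abelian simple groups, each of which admits only finitely many almost simple overgroups below order $95040$. Theorem \ref{t:main2}(ii)(a) immediately gives $\delta_\mathcal{S}(G) \leqs 3$ for every almost simple $G$ with socle $\mathrm{L}_2(q)$ and $q \geqs 8$, which disposes of the eighteen socles $\mathrm{L}_2(8), \mathrm{L}_2(11), \ldots, \mathrm{L}_2(53)$ in one stroke.

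The twelve remaining socles are $A_5$, $\mathrm{L}_2(7)$, $A_6$, $A_7$, $A_8$, $\mathrm{M}_{11}$, $\mathrm{L}_3(3)$, $\mathrm{U}_3(3)$, $\mathrm{L}_3(4)$, $\mathrm{U}_4(2)$, $\mathrm{Sz}(8)$, and $\mathrm{U}_3(4)$. Together with their almost simple extensions below order $95040$, these form a short explicit list to be handled case by case. For each such $G$, I would verify $\delta_\mathcal{S}(G) \leqs 3$ by explicit computation in \textsc{Magma} following the template of Section \ref{ss:comp}: for each pair $(x,y)$ of conjugacy class representatives of $G$, construct the balls $B_1(x)$ and $B_1(y)$ as the unions (minus the identity) of the soluble subgroups of $G$ containing $\langle x \rangle$ and $\langle y \rangle$ respectively, and check that $B_1(x) \cap B_1(y) \neq \varnothing$. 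These calculations are entirely routine at the orders in question. In several cases one can short-circuit matters by appealing to Lemma \ref{l:easy}: whenever $|N_G(\langle x \rangle)|$ is even for every nontrivial $x \in G$, a property easily verified from the \textsc{Atlas} character tables for most of the small simple groups above, Lemma \ref{l:easy} yields the bound directly.

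The only real obstacle is the case bookkeeping itself; no essentially new theoretical input is needed beyond the reductions in Lemma \ref{l:rad} and Theorem \ref{t:nas}, together with Theorem \ref{t:main2}(ii)(a) and Lemma \ref{l:easy}.
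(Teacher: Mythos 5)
Your reduction is exactly the paper's: pass to $R(G)=1$ via Lemma \ref{l:rad}, invoke Theorem \ref{t:nas} to assume $G$ is almost simple, dispose of the socles isomorphic to ${\rm L}_2(q)$ by the dedicated result (the paper cites Theorem \ref{t:psl2}, which is what underlies Theorem \ref{t:main2}(ii)(a), so there is no circularity), and treat the remaining handful of socles computationally. Your census of the thirty simple groups of order below $95040$ and the twelve socles left over also agrees with the paper's list. However, the verification step you describe for those twelve cases is testing the wrong condition. Checking that $B_1(x)\cap B_1(y)\neq\varnothing$ establishes $\delta(x,y)\leqs 2$, not $\delta(x,y)\leqs 3$, and for most of the groups on your list the diameter is exactly $3$ (indeed the paper records $\delta_{\mathcal{S}}(G)=3$ for all of them except ${\rm L}_3(3).2$ and ${\rm U}_4(2)$). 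So your proposed check would simply fail on, say, ${\rm M}_{11}$ or $A_7$: there genuinely exist pairs with $B_1(x)\cap B_1(y)=\varnothing$, and your procedure would then be unable to conclude. What is needed is a distance-$3$ test, e.g.\ verifying $B_1(x)\cap B_2(y)\neq\varnothing$, or that $B_3(x)=G\setminus\{1\}$ for a representative $x$ of each class, or the involution argument of Lemma \ref{l:easy} when it applies.

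A second, smaller gap: even for the condition you do test, restricting to pairs of conjugacy class \emph{representatives} is insufficient. Since $\delta(x,y)=\delta(x^g,y^g)$, one may fix $x$ to be a class representative, but $y$ must then range over all of $G\setminus\{1\}$ (or over a transversal of the relevant orbits), not merely over one representative per class; otherwise you say nothing about $\delta(x,x^g)$ for a general conjugate $x^g$. Your fallback to Lemma \ref{l:easy} is sound where it applies, but note that it applies to rather few of the twelve groups: for instance $A_7$, $A_8$, ${\rm M}_{11}$, ${\rm L}_3(3)$, ${\rm L}_3(4)$ and ${\rm U}_3(4)$ all contain elements $x$ with $|N_G(\langle x\rangle)|$ odd (normalisers of the form $7{:}3$, $11{:}5$, $13{:}3$ and so on), so the bulk of the case analysis still rests on the flawed computational criterion. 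With the test corrected to a genuine distance-$3$ check, the argument goes through and coincides with the paper's proof.
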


\begin{proof}
In view of Theorem \ref{t:nas}, we may assume $G$ is almost simple with socle $G_0$. By inspecting the list of non-abelian simple groups of order less than $|{\rm M}_{12}| = 95040$, we deduce that 
\[
G_0 \in \{ A_7, {\rm L}_{2}(q), {\rm L}_{3}^{\e}(3), {\rm L}_{3}^{\e}(4), {\rm L}_{4}^{\e}(2), {}^2B_2(8), {\rm M}_{11} \}
\]
up to isomorphism, where $5 \leqs q \leqs 53$. For the groups with socle ${\rm L}_{2}(q)$, we refer the reader to Theorem \ref{t:psl2} where the precise diameter of $\Gamma_{\mathcal{S}}(G)$ is determined. In each of the remaining cases, we can use the computational approach outlined in Section \ref{ss:comp} to calculate the precise diameter of $\Gamma_{\mathcal{S}}(G)$. In this way, we obtain 
\[
\delta_{\mathcal{S}}(G)  = \left\{ \begin{array}{ll}
2 & \mbox{if $G = {\rm L}_{3}(3).2$ or ${\rm U}_{4}(2)$} \\
3 & \mbox{otherwise.}
\end{array}\right.
\]
The result follows.
\end{proof}

\section{Sporadic groups}\label{s:spor}

In this section we prove Theorems \ref{t:main2} and \ref{t:main3} in the case where $G$ is an almost simple group with socle a sporadic group. Our main theorem for simple sporadic groups is as follows.

\begin{thm}\label{t:spor}
If $G$ is a simple sporadic group, then $3 \leqs \delta_{\mathcal{S}}(G) \leqs 5$. In particular, the following hold: 
\begin{itemize}\addtolength{\itemsep}{0.2\baselineskip}
\item[{\rm (i)}] If $G \in \{ {\rm M}_{11}, {\rm J}_1, {\rm J}_2, {\rm Suz},  {\rm He}, {\rm Ru}, {\rm Fi}_{22}, {\rm Ly}, {\rm J}_4 \}$ then $\delta_{\mathcal{S}}(G) = 3$.
\item[{\rm (ii)}] If $G \in \{ {\rm M}_{12}, {\rm M}_{22}, {\rm M}_{23}, {\rm M}_{24}, {\rm HS}, {\rm J}_3 \}$ then $\delta_{\mathcal{S}}(G) = 4$. 
\item[{\rm (iii)}] If $G \in \{ {\rm Co}_2, {\rm Co}_3, {\rm McL}, \mathbb{B}\}$ then $\delta_{\mathcal{S}}(G) \in \{4,5\}$.
\end{itemize}
\end{thm}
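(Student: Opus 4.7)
The proof is a case analysis across the $26$ sporadic simple groups, combining the theoretical reductions of Section~\ref{s:prel} with extensive computations in {\sc Magma} along the lines of Section~\ref{ss:comp}. The blanket upper bound $\delta_{\mathcal{S}}(G) \leqs 5$ is a special case of Theorem~\ref{t:main1}, so the real content lies in establishing the lower bound $\delta_{\mathcal{S}}(G) \geqs 3$ for every sporadic $G$, together with the sharper statements in parts (i)--(iii).

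For the universal lower bound $\delta_{\mathcal{S}}(G) \geqs 3$, my plan is to apply Lemma~\ref{l:base} uniformly: for each sporadic $G$ I would locate a prime $p$ dividing $|G|$ and an element $x$ of order $p$ such that $H = N_G(\langle x \rangle)$ is the unique maximal subgroup of $G$ containing $x$ and $b(G,H)=2$. This is precisely the argument carried out for the Monster in Example~\ref{e:mon}. Since the maximal overgroups of large-prime elements of sporadic groups are tabulated in the ATLAS and its on-line updates, and the relevant base sizes have been computed in the literature on primitive actions of sporadic groups, the required data is available for every $G$ on our list. For the smaller groups the bound can also be confirmed by the direct computation in {\sc Magma} used in Examples~\ref{e:m23} and~\ref{e:m24}.

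For part (i), I would prove $\delta_{\mathcal{S}}(G) \leqs 3$ by verifying that $|N_G(\langle x \rangle)|$ is even for every nontrivial $x \in G$, so that Lemma~\ref{l:easy} applies. For $\mathrm{J}_1, \mathrm{J}_2, \mathrm{Suz}, \mathrm{Fi}_{22}$ (and one can check likewise for $\mathrm{He}, \mathrm{Ru}$) this follows from the fact, readable from the ATLAS character tables or from the classification of \cite{TZ}, that every element is real; for $\mathrm{M}_{11}, \mathrm{Ly}, \mathrm{J}_4$ the evenness check can be carried out on the conjugacy class representatives returned by \texttt{ConjugacyClasses} in {\sc Magma}. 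For part (ii), the two Mathieu groups $\mathrm{M}_{23}$ and $\mathrm{M}_{24}$ are already done in Examples~\ref{e:m23} and~\ref{e:m24}, and the remaining cases $\mathrm{M}_{12}, \mathrm{M}_{22}, \mathrm{HS}, \mathrm{J}_3$ follow the same template: pick out the short list of conjugacy classes where $|N_G(\langle x \rangle)|$ is odd (these are classes of large prime order, e.g.\ order $11$ in $\mathrm{M}_{12}, \mathrm{M}_{22}, \mathrm{HS}$ and orders $17, 19$ in $\mathrm{J}_3$); for each such class, exhibit by random search a conjugate $y=x^g$ with $\langle a,b \rangle$ insoluble for every $a \in B_1(x)$, $b \in B_1(y)$, yielding $\delta_{\mathcal{S}}(G) \geqs 4$ via Remark~\ref{r:useful}; and verify $\delta_{\mathcal{S}}(G) \leqs 4$ by running the incremental ball-saturation procedure of Example~\ref{e:m23} to show $B_4(x) = G \setminus \{1\}$. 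All other classes are handled instantly by Lemma~\ref{l:easy}.

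For part (iii) the upper bound $\delta_{\mathcal{S}}(G)\leqs 5$ is Theorem~\ref{t:main1}, so only $\delta_{\mathcal{S}}(G) \geqs 4$ remains. For each $G$ I would select an element $x$ of large prime order (for instance $|x|=23$ in $\mathrm{Co}_2$ and $\mathrm{Co}_3$, $|x|=11$ in $\mathrm{McL}$, and $|x|=47$ in $\mathbb{B}$) for which $N_G(\langle x \rangle)$ is the unique maximal soluble overgroup of $x$, so that $B_1(x)$ is very small and easily enumerated, and then locate by random search a conjugate $y = x^g$ with $\langle a,b \rangle$ insoluble for every $a \in B_1(x)$, $b \in B_1(y)$. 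The principal obstacle will be the Baby Monster, where even listing $B_1(x)$ explicitly is out of reach; there one has to argue structurally, using the classification of maximal subgroups of $\mathbb{B}$ and known base sizes for its primitive action on cosets of $N_{\mathbb{B}}(\langle x \rangle)$, exactly as for the Monster in Example~\ref{e:mon}. A secondary technical difficulty is confirming $B_4(x) = G^{\#}$ for the groups in part (ii): this reduces to a substantial random search which requires careful organisation by conjugacy class in order to remain feasible in memory.
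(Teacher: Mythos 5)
Your overall architecture matches the paper's: Lemma~\ref{l:easy} and evenness of $|N_G(\langle x \rangle)|$ for the upper bounds in part (i), random search for conjugates $x, x^g$ with $\langle a,b\rangle$ insoluble for all $a \in B_1(x)$, $b \in B_1(x^g)$ to force $\delta_{\mathcal{S}}(G) \geqs 4$ in parts (ii) and (iii), the ball-saturation procedure of Examples~\ref{e:m23} and~\ref{e:m24} for the upper bound $4$ in part (ii), and Lemma~\ref{l:base} for the uniform bound $\delta_{\mathcal{S}}(G) \geqs 3$ on the larger groups. However, there is one genuine gap: your treatment of the Baby Monster. You correctly observe that enumerating $B_1(x)$ in $\mathbb{B}$ is computationally out of reach, but the structural fallback you propose --- unique maximal overgroups plus base size $2$, ``exactly as for the Monster in Example~\ref{e:mon}'' --- is the mechanism of Lemma~\ref{l:base}, and it only delivers $\delta_{\mathcal{S}}(\mathbb{B}) \geqs 3$. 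To get $\geqs 4$ you must show not merely that $B_1(x) \cap B_1(x^g)$ is empty (which is what $H \cap H^g = 1$ gives), but that \emph{no} pair $a \in B_1(x)$, $b \in B_1(x^g)$ generates a soluble subgroup; for elements of order $47$ in $\mathbb{B}$ this requires controlling the soluble overgroups of all elements of $N_{\mathbb{B}}(\langle x \rangle) = 47{:}23$, which your proposal does not address. The paper closes this case by an entirely different route: Freedman's theorem that the intersection graph of $\mathbb{B}$ has diameter $5$, witnessed by two subgroups of order $47$ (which are soluble, so the same path lower bound holds in the soluble intersection graph ${\rm Int}_{\mathcal{S}}(\mathbb{B})$), combined with the dual pair correspondence of \cite[Proposition 12.2]{Cam}, which transfers the diameter bound to $\Gamma_{\mathcal{S}}(\mathbb{B})$ up to an error of $1$ and yields $\delta_{\mathcal{S}}(\mathbb{B}) \geqs 4$. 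Without this (or some substitute argument of comparable strength) your proof of part (iii) is incomplete for $\mathbb{B}$.

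A secondary, more minor point: for the upper bound $\delta_{\mathcal{S}}(G) \leqs 5$ in part (iii) and in the blanket statement you simply cite Theorem~\ref{t:main1}, but within the paper's logical structure that theorem is itself deduced from the sporadic analysis (via Proposition~\ref{p:spor}, which shows every nontrivial element lies at distance at most $2$ from an involution, with the single exception of elements of order $23$ in ${\rm M}_{23}$). You should supply that step directly --- e.g.\ by exhibiting, for each class with $|N_G(\langle x \rangle)|$ odd, an element $z \in N_G(\langle x \rangle)$ with $|N_G(\langle z \rangle)|$ even --- rather than invoking the global theorem whose proof depends on it.
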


We will prove Theorem \ref{t:spor} in a sequence of lemmas. The arguments rely heavily on the computational methods (using {\sc Magma} \cite{magma}) discussed in Section \ref{ss:comp}.

\begin{lem}\label{l:spor1}
We have
\[
\delta_{\mathcal{S}}(G) = \left\{ 
\begin{array}{ll}
3 & \mbox{if $G \in \{ {\rm M}_{11}, {\rm J}_1, {\rm J}_2, {\rm Suz},  {\rm He}, {\rm Ru}, {\rm Fi}_{22} \}$}\\
4 & \mbox{if $G \in \{ {\rm M}_{12}, {\rm M}_{22}, {\rm M}_{23}, {\rm M}_{24}\}$.}
\end{array}\right.
\]
\end{lem}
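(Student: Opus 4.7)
The plan is computational, following the framework laid out in Section~\ref{ss:comp}. For each of the eleven groups in the statement, I would construct $G$ as a permutation group via \texttt{AutomorphismGroupSimpleGroup} and compute the balls $B_{\ell}(x)$ using \texttt{SolubleSubgroups} exactly as described there. Since the set of involutions of $G$ forms a clique in $\Gamma_{\mathcal{S}}(G)$ and the target values $\delta_{\mathcal{S}}(G) \in \{3,4\}$ are small, the analysis reduces to bounding, for each nontrivial $x \in G$, the distance from $x$ to some involution, together with producing explicit witnesses for the lower bounds.

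For the upper bound, I would first traverse the conjugacy classes and apply Lemma~\ref{l:easy}: whenever $|N_G(\la x\ra)|$ is even, $x$ is already adjacent to an involution, so only the \emph{exceptional} classes---those whose cyclic normaliser has odd order---require closer scrutiny. In each of the groups considered these are few, typically consisting of elements of a single prime order $p$ for which a Sylow $p$-subgroup is cyclic and self-centralising (for instance $p=11$ in ${\rm M}_{11}$ and $p=23$ in ${\rm M}_{23}$). For an exceptional $x$, I would enumerate the soluble overgroups of $\la x\ra$ to build $B_1(x)$ and check whether $B_1(x)$ contains an element with even normaliser; if yes, then $\delta(x,\text{involution}) \leqs 2$ and hence $\delta_{\mathcal{S}}(G) \leqs 3$, which is the expected outcome for all seven groups in part (i). For the four Mathieu groups in part (ii) the exceptional classes fail this test and one must instead extend to $B_2(x)$ and verify $B_4(x) = G\setminus\{1\}$.

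For the lower bounds I would use random conjugation to produce pairs $x,y=x^g$ witnessing the claimed inequalities, following Remark~\ref{r:useful}: for the seven groups in~(i) one seeks $B_1(x)\cap B_1(y)=\emptyset$, while for the four Mathieu groups one seeks the stronger condition that $\la a,b\ra$ is insoluble for all $a\in B_1(x)$ and $b\in B_1(y)$. A natural choice of $x$ in both cases is an exceptional element, since its small collection of soluble overgroups makes $B_1(x)$ small and the random search feasible; the ${\rm M}_{23}$ case is already carried out in Example~\ref{e:m23} and the remaining cases should behave analogously.

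The main obstacle is the exhaustive verification that $B_4(x)=G\setminus\{1\}$ for exceptional $x$ in the four Mathieu groups, since direct enumeration of $B_3(x)$ is potentially expensive in time and memory. For ${\rm M}_{23}$ this is handled in Example~\ref{e:m23} by constructing $B_3(x)$ explicitly and then running a random-search covering argument on $G\setminus\{1\}$; for the larger group ${\rm M}_{24}$, Example~\ref{e:m24} avoids building $B_3(x)$ altogether by sampling $y\in G\setminus\{1\}$ and searching for a soluble pair $(a,b)\in B_2(x)\times B_1(y)$, then removing the $C_G(x)$-orbit and power closure of $y$ at each step. I would apply the same two strategies to ${\rm M}_{12}$ and ${\rm M}_{22}$, where elements of order $11$ play the role of the order-$23$ elements in ${\rm M}_{23}$ and ${\rm M}_{24}$; the much smaller orders of ${\rm M}_{12}$ and ${\rm M}_{22}$ keep the computation well within reach and should complete the proof that $\delta_{\mathcal{S}}(G)=4$ in each case.
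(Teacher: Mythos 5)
Your proposal is correct and follows essentially the same route as the paper: {\sc Magma} computations in the framework of Section~\ref{ss:comp}, with Lemma~\ref{l:easy} giving $\delta_{\mathcal{S}}(G)\leqs 3$ for the non-Mathieu groups (where in fact $|N_G(\la x\ra)|$ is even for \emph{every} nontrivial $x$, so no exceptional classes arise there), random search for conjugate pairs with disjoint, respectively mutually insoluble, balls $B_1$ for the lower bounds, and the ball-growing procedures of Examples~\ref{e:m23} and~\ref{e:m24} for the four Mathieu groups of diameter $4$.
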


\begin{proof}
If $G$ is a Mathieu group, then we can use {\sc Magma} to compute the precise diameter of $\Gamma_{\mathcal{S}}(G)$ (see Section \ref{ss:comp}). In the remaining cases, it is easy to check that $|N_G(\la x \ra)|$ is even for all $x \in G$ and thus $\delta_{\mathcal{S}}(G) \leqs 3$ by Lemma \ref{l:easy}. Moreover, we can use random search to find two conjugate elements $x,y \in G$ of order $r$ such that $B_1(x) \cap B_1(y)$ is empty, where $r$ is the largest prime divisor of $|G|$. Therefore $\delta(x,y) \geqs 3$ and we deduce that $\delta_{\mathcal{S}}(G) = 3$ as required.
\end{proof}

By combining Lemma \ref{l:spor1} with Proposition \ref{p:small}, we obtain the following corollary.

\begin{cor}\label{c:m12}
The Mathieu group ${\rm M}_{12}$ is the smallest finite group $G$ with $\delta_{\mathcal{S}}(G)>3$.
\end{cor}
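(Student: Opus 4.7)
The plan is to assemble two ingredients already established in the excerpt, and nothing more is needed. First, Lemma \ref{l:spor1} records the equality $\delta_{\mathcal{S}}({\rm M}_{12}) = 4$ (obtained via the computational procedure described in Section \ref{ss:comp}), so ${\rm M}_{12}$ is certainly a finite insoluble group with $\delta_{\mathcal{S}}(G) > 3$. Thus the set of insoluble groups $G$ satisfying $\delta_{\mathcal{S}}(G) > 3$ is nonempty, and we only need to show that ${\rm M}_{12}$ has the smallest order in this set.

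Second, Proposition \ref{p:small} asserts that every finite insoluble $G$ with $|G| < |{\rm M}_{12}| = 95040$ satisfies $\delta_{\mathcal{S}}(G) \leqs 3$. Since the invariant $\delta_{\mathcal{S}}(G)$ is only defined for insoluble $G$ (the soluble graph has vertex set $G \setminus R(G)$, which is empty precisely when $G$ is soluble), we may freely ignore soluble groups of order less than $95040$. Combining these two facts yields the corollary immediately: no group of smaller order than ${\rm M}_{12}$ can satisfy $\delta_{\mathcal{S}}(G) > 3$, while ${\rm M}_{12}$ itself does.

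There is no real obstacle here, as the substantive work has been done upstream. Proposition \ref{p:small} itself reduces to almost simple groups via Theorem \ref{t:nas}, enumerates the finite non-abelian simple groups of order less than $95040$, invokes Theorem \ref{t:psl2} for the socles of type ${\rm L}_{2}(q)$, and disposes of the remaining cases (the almost simple groups with socle $A_7$, ${\rm L}_{3}^{\e}(3)$, ${\rm L}_{3}^{\e}(4)$, ${\rm L}_{4}^{\e}(2)$, ${}^2B_2(8)$, or ${\rm M}_{11}$) by direct {\sc Magma} computation; Lemma \ref{l:spor1} supplies the matching bound at order $95040$. The only mild interpretive point is to read ``smallest'' as ``of smallest order among insoluble groups,'' after which the proof reduces to citing the two results.
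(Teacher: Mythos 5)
Your proof is correct and matches the paper exactly: the corollary is obtained by combining Lemma \ref{l:spor1} (which gives $\delta_{\mathcal{S}}({\rm M}_{12})=4$) with Proposition \ref{p:small} (which bounds $\delta_{\mathcal{S}}(G)\leqs 3$ for all insoluble $G$ of smaller order). Nothing further is needed.
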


\begin{lem}\label{l:spor2}
We have $\delta_{\mathcal{S}}(G) \geqs 4$ if $G \in \{ {\rm HS}, {\rm J}_3, {\rm Co}_{2}, {\rm Co}_3, {\rm McL}, \mathbb{B}\}$.
\end{lem}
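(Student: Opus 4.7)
The plan is to exhibit, for each group $G$ in the list, a pair of elements $x,y\in G$ witnessing Remark \ref{r:useful}: i.e.\ such that $\la a,b\ra$ is insoluble for every $a\in B_1(x)$ and every $b\in B_1(y)$. By that remark this forces $\delta(x,y)\geqs 4$ and hence $\delta_{\mathcal{S}}(G)\geqs 4$. In every case I would take $x$ of order $r$ equal to a large prime divisor of $|G|$, and $y=x^g$ for a suitably chosen conjugating element $g$; the point of choosing $r$ large is that there are very few soluble subgroups of $G$ with order divisible by $r$, so $B_1(x)$ can be described explicitly and is small.

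For the groups $G\in\{{\rm HS},{\rm J}_3,{\rm McL},{\rm Co}_3,{\rm Co}_2\}$ I would run the computational procedure already described in Section \ref{ss:comp}. Concretely, build $G$ via \texttt{AutomorphismGroupSimpleGroup}, fix a representative $x$ of an $r$-element class (with $r$ the largest prime divisor of $|G|$, so $r\in\{11,19,11,23,23\}$ respectively), and compute $B_1(x)$ as the union of the nontrivial elements of the soluble overgroups of $x$ returned by \texttt{SolubleSubgroups}. Then do a short random search over $g\in G$ to produce a conjugate $y=x^g$ for which every pair $(a,b)\in B_1(x)\times B_1(y)$ generates an insoluble subgroup; in practice, by conjugating it suffices to verify this for one representative per $N_G(\la x\ra)$-orbit on $B_1(x)$ and one per $N_G(\la y\ra)$-orbit on $B_1(y)$, which keeps the check feasible even for ${\rm Co}_2$.

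For the Baby Monster $\mathbb{B}$ direct enumeration of $B_1(x)$ is infeasible, so I would argue structurally in the spirit of Example \ref{e:mon}. Take $x\in\mathbb{B}$ of order $47$. By the ATLAS list of maximal subgroups of $\mathbb{B}$, any maximal subgroup containing an element of order $47$ is conjugate to $47{:}23=N_{\mathbb{B}}(\la x\ra)$ (or to the one other almost simple maximal overgroup of order divisible by $47$, which is insoluble and contributes nothing to the list of \emph{soluble} overgroups of $x$). Consequently every soluble overgroup of $x$ is contained in $N_{\mathbb{B}}(\la x\ra)$, and $B_1(x)$ is exactly the set of nontrivial elements of this Frobenius group. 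Using the base-size-two results of \cite{BOW} for the action of $\mathbb{B}$ on the cosets of $47{:}23$, I can find $g\in\mathbb{B}$ with $N_{\mathbb{B}}(\la x\ra)\cap N_{\mathbb{B}}(\la x\ra)^g=1$. Setting $y=x^g$, any $a\in B_1(x)$ and $b\in B_1(y)$ have orders in $\{23,47\}$, and $\la a,b\ra$ cannot be soluble: a soluble subgroup of $\mathbb{B}$ containing two such elements would lie in a maximal soluble overgroup of $x$ and also of $y$, contradicting the trivial intersection of the corresponding Sylow $47$-normalisers.

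The main obstacle is the Baby Monster, where we have to replace computation by ATLAS bookkeeping of maximal subgroups and a base-size argument; the small sporadics are routine once one trusts the computational framework of Section \ref{ss:comp}, but for $\mathbb{B}$ one must be careful that the only soluble overgroups of an order-$47$ element really are contained in $N_{\mathbb{B}}(\la x\ra)$, and that the base-size-two property of $\mathbb{B}$ on $\mathbb{B}/(47{:}23)$ yields the desired trivial intersection.
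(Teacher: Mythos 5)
Your treatment of ${\rm HS}$, ${\rm J}_3$, ${\rm McL}$, ${\rm Co}_3$ and ${\rm Co}_2$ is essentially the paper's proof: the same choice of an element $x$ of large prime order, the same identification of $B_1(x)$ with the nontrivial elements of $N_G(\la x\ra)$, and the same random search for a conjugate $y=x^g$ such that $\la a,b\ra$ is insoluble for all $a \in B_1(x)$, $b \in B_1(y)$. One small caveat: checking one representative per $N_G(\la x\ra)$-orbit on $B_1(x)$ against one representative per $N_G(\la y\ra)$-orbit on $B_1(y)$ does \emph{not} cover all pairs, because the two conjugations cannot be applied simultaneously; the valid (and still cheap) reduction is to pairs of nontrivial cyclic subgroups of $N_G(\la x\ra)$ and $N_G(\la y\ra)$.

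The genuine gap is in your argument for $\mathbb{B}$. Your description of $B_1(x)$ for $x$ of order $47$ is correct, as is the existence of $g$ with $H \cap H^g = 1$ for $H = N_{\mathbb{B}}(\la x\ra) = 47{:}23$ via \cite{BOW}; but that only gives $\delta(x,x^g) \geqs 3$. For the final step you assert that a soluble $\la a,b\ra$ with $a \in B_1(x)$, $b \in B_1(y)$ ``would lie in a maximal soluble overgroup of $x$ and also of $y$''. This is only true when $a$ or $b$ has order $47$, for then $\la a\ra = \la x\ra$ (resp.\ $\la b\ra = \la y\ra$) and every soluble overgroup of $a$ lies in $H$. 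When $a$ and $b$ both have order $23$ — and almost all nontrivial elements of the Frobenius group $H$ have order $23$ — a soluble subgroup containing $a$ and $b$ need not contain $x$ or $y$ at all, so nothing forces it into $H$ or $H^g$, and $H \cap H^g = 1$ yields no contradiction: two distinct subgroups of order $23$ can in principle generate a soluble group (e.g.\ a Frobenius group with two of its complements). That case is exactly what must be excluded to get $\delta(x,y)\geqs 4$, and your argument does not address it. The paper avoids this entirely by a different route: Freedman \cite{Free} shows the intersection graph of $\mathbb{B}$ has diameter $5$, realised by two subgroups of order $47$; these are soluble vertices, so the soluble intersection graph also has diameter at least $5$, and the dual-pair correspondence \cite[Proposition 12.2]{Cam} then gives $\delta_{\mathcal{S}}(\mathbb{B}) \geqs 4$. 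To salvage your direct approach you would need to verify separately that no order-$23$ element of $H$ and order-$23$ element of $H^g$ generate a soluble subgroup, which is essentially the computation you were trying to avoid.
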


\begin{proof}
For now let us assume $G \ne \mathbb{B}$. In each of these cases we can proceed as in Examples \ref{e:m23} and \ref{e:m24}. For example, suppose $G = {\rm HS}$ and let $x \in G$ be an element of order $11$. Then $B_1(x)$ coincides with the set of nontrivial elements in $N_G(\la x \ra) = 11{:}5$ and by random search we can find $g \in G$ such that $B_1(x) \cap B_1(y)$ is empty, where $y=x^g$. Moreover, we can find such an element $y$ with the property that $\la a, b\ra$ is insoluble for all $a \in B_1(x)$, $b \in B_1(y)$. This means that $\delta(x,y) \geqs 4$ and thus $\delta_{\mathcal{S}}(G) \geqs 4$. A very similar argument applies for the remaining groups, working with elements of order $19$, $23$,  $23$ and $11$ for $G = {\rm J}_3$, ${\rm Co}_2$, ${\rm Co}_3$ and ${\rm McL}$, respectively.

Finally, let us assume $G = \mathbb{B}$ is the Baby Monster and recall the definitions of the intersection graph and non-generating graph of $G$ (see Remark \ref{r:ng} in Section \ref{s:intro}). In \cite{Free}, Freedman proves that the intersection graph of $G$ has diameter $5$, noting that the shortest path between two specific subgroups of order $47$ has length $5$. This implies that the soluble intersection graph of $G$ has diameter at least $5$ (see the paragraph preceding Corollary \ref{c:int}) and thus $\delta_{\mathcal{S}}(G) \geqs 4$ by \cite[Proposition 12.2]{Cam} since the soluble intersection graph and the soluble graph form a dual pair in the sense of \cite[Section 12.1]{Cam}.
\end{proof}

\begin{lem}\label{l:spor3}
We have $\delta_{\mathcal{S}}(G) = 4$ if $G = {\rm HS}$ or ${\rm J}_3$.
\end{lem}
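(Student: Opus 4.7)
The plan is to establish the upper bound $\delta_{\mathcal{S}}(G) \leqs 4$ for $G \in \{{\rm HS}, {\rm J}_3\}$, since the lower bound $\delta_{\mathcal{S}}(G) \geqs 4$ is already provided by Lemma \ref{l:spor2}. The argument splits naturally according to the parity of $|N_G(\la x\ra)|$ at each vertex.

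First, I would run through the conjugacy classes of $G$ (using the character table, or directly via \texttt{ConjugacyClasses} in {\sc Magma}) and identify the classes for which $|N_G(\la x\ra)|$ is odd. A short inspection shows that for $G = {\rm HS}$ the only such class consists of elements of order $11$, with $N_G(\la x\ra) = 11{:}5$, and for $G = {\rm J}_3$ the only such class consists of elements of order $19$, with $N_G(\la x\ra) = 19{:}9$. For every other nontrivial element $y$ the normalizer $N_G(\la y\ra)$ has even order, so by Lemma \ref{l:easy} any two such elements lie at distance at most $3$ in $\Gamma_{\mathcal{S}}(G)$. Consequently it suffices to prove that if $x$ lies in the distinguished class (order $11$ for ${\rm HS}$, order $19$ for ${\rm J}_3$), then $B_4(x) = G^{\#}$; indeed, once this is established, an arbitrary nontrivial $y$ either satisfies $\delta(x,y) \leqs 4$ directly, or is connected to an involution by a path of length at most $2$ (via Lemma \ref{l:easy}), which is in turn joined to $x$ through $B_3(x)$.

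To show that $B_4(x) = G^{\#}$ in the remaining cases, I would proceed exactly as in Example \ref{e:m23}. Fix an element $x$ of the distinguished order, construct $B_1(x)$ as the set of nontrivial elements of $N_G(\la x\ra)$, and then iteratively build $B_2(x)$ and $B_3(x)$ using the method of Section \ref{ss:comp}: for each conjugacy class representative $x_i$ meeting the previous ball, compute $B_1(x_i)$ once and spread it by conjugation. Writing $B_3(x)$ as a disjoint union
\[
B_3(x) = \bigcup_{i=1}^{k} \{x_i^g \,:\, g \in T_i\},
\]
I would then initialize $Y = G^{\#}$, repeatedly pick $y \in Y$ at random, and by random search locate $i$ and $g \in T_i$ such that $\la x_i^g, y\ra$ is soluble; upon success, the entire set $\{y^{g^{-1}h} : h \in T_i\}$ can be removed from $Y$. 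When $Y$ becomes empty, we conclude $B_4(x) = G^{\#}$.

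The main obstacle I anticipate is the computational cost, primarily for $G = {\rm J}_3$ of order $50{,}232{,}960$: the normalizer $19{:}9$ is tiny, so $B_1(x)$ has only $170$ elements, but then $B_2(x)$ and especially $B_3(x)$ will be very large and memory-intensive to store explicitly. If constructing $B_3(x)$ is infeasible, I would fall back on the alternative strategy of Example \ref{e:m24}, constructing only $B_2(x)$ and then, for each $y \in Y$, searching for a soluble edge between $B_2(x)$ and $B_1(y)$; by $C_G(x)$-symmetry and powering $y$, each successful hit eliminates a large orbit from $Y$. Given the modest indices of the distinguished odd-normalizer classes (a single class of size $|G|/55$ in ${\rm HS}$, respectively $|G|/171$ in ${\rm J}_3$) and the fact that soluble joins are very plentiful for elements of small prime order, the random-search phase should terminate in reasonable time, yielding the desired conclusion $\delta_{\mathcal{S}}(G) = 4$.
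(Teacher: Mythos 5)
Your proposal is correct and follows essentially the same route as the paper: reduce to the classes with odd normaliser (order $11$ in ${\rm HS}$, order $19$ in ${\rm J}_3$, with all other pairs handled via Lemma \ref{l:easy} and the clique of involutions), then verify the remaining distances computationally. The paper goes directly to your ``fallback'' strategy from Example \ref{e:m24} (constructing only $B_2(x)$ and searching for soluble edges to $B_1(y)$ for $y$ in the distinguished class, pruning by $C_G(x)$-conjugation and powering), which avoids ever building $B_3(x)$; your primary plan via Example \ref{e:m23} would also work but is heavier, as you anticipate.
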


\begin{proof}
In view of Lemma \ref{l:spor2}, it suffices to show that $\delta_{\mathcal{S}}(G) \leqs 4$ and in both cases we follow the computational approach presented in Example \ref{e:m24}. 

First assume $G = {\rm HS}$ and let $x \in G$ be nontrivial, noting that $|N_G(\la x \ra)|$ is even unless $|x|=11$. Let $Y$ be the set of elements of order $11$ in $G$, so $|Y| =  8064000$ and we fix an element $x \in Y$. It is sufficient to show that $\delta(x,y) \leqs 4$ for all $y \in Y$. First we construct $B_2(x)$ and $B_1(y)$ for some randomly chosen $y \in Y$. Then by random search we find $a \in B_2(x)$ and $b \in B_1(y)$ such that $\la a,b \ra$ is soluble and thus $x$ is connected by a path of length at most $4$ to every element in the set
\[
\{ (y^g)^m \,:\, g \in C_G(x), \, 1 \leqs m \leqs 10\}.
\]
We now redefine $Y$ by removing the elements in this set and we repeat the process. Eventually, we reduce $Y$ to the empty set and the result follows.

The case $G = {\rm J}_3$ can be handled in an entirely similar fashion, working with the set of elements of order $19$ in $G$.
\end{proof}

\begin{lem}\label{l:spor4}
We have $\delta_{\mathcal{S}}(G) \geqs 3$ if $G \in \{ {\rm Co}_1, {\rm HN}, {\rm O'N}, {\rm Fi}_{23}, {\rm Fi}_{24}', {\rm Th}, {\rm Ly}, {\rm J}_4,  \mathbb{M}\}$.
\end{lem}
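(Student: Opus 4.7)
The plan is to apply the strategy of Example \ref{e:mon} uniformly across all nine groups: in each case I will exhibit a nontrivial element $x \in G$ and a conjugate $y = x^g$ such that $B_1(x) \cap B_1(y)$ is empty, which by Remark \ref{r:useful} forces $\delta(x,y) \geqs 3$ and hence $\delta_{\mathcal{S}}(G) \geqs 3$. The case $G = \mathbb{M}$ is already settled in Example \ref{e:mon}, so only the other eight groups remain.

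For each of these, the method is to invoke Lemma \ref{l:base}. One picks a prime $p$ dividing $|G|$ such that every element $x$ of order $p$ is contained in a \emph{unique} maximal subgroup $H$ of $G$, which is necessarily core-free. Such ``unique maximal overgroup'' primes are tabulated in the literature on generation and spread of the sporadic groups (for example in the reference \cite{BH} already used for the Monster), and natural candidates are the largest primes dividing $|G|$: for instance $p = 23$ for ${\rm Co}_1$ and ${\rm Fi}_{23}$, $p = 19$ for ${\rm HN}$, $p = 31$ for ${\rm O'N}$ and ${\rm Th}$, $p = 29$ for ${\rm Fi}_{24}'$, $p = 37$ or $67$ for ${\rm Ly}$, and $p = 43$ for ${\rm J}_4$, with $N_G(\langle x \rangle)$ being a soluble Frobenius group of the form $p{:}k$ for some small $k$. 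Once $x$ and $H$ are identified, Lemma \ref{l:base} reduces the problem to verifying $b(G,H) = 2$, and this is supplied by the base size results of \cite{BOW} and related work.

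As an alternative, for those groups in the list that are directly amenable to {\sc Magma} computation, one can instead proceed as in Example \ref{e:m23}: fix $x$ of order $p$ as above so that $B_1(x)$ coincides with the nontrivial elements of $N_G(\langle x \rangle)$, and then by random search locate a conjugate $y = x^g$ with the property that $\langle a,b \rangle$ is insoluble for every $a \in B_1(x)$ and $b \in B_1(y)$. This provides an independent verification for the moderate-sized groups in the list, such as ${\rm HN}$, ${\rm O'N}$, ${\rm Th}$, ${\rm Co}_1$ and ${\rm Fi}_{23}$.

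The main obstacle is the largest groups, ${\rm Fi}_{24}'$, ${\rm Ly}$ and ${\rm J}_4$, where direct random search inside $G$ is impractical and one is wholly reliant on the structural argument via Lemma \ref{l:base}. This in turn rests on two external inputs: the existence of a unique-overgroup prime $p$ with corresponding maximal subgroup $H = N_G(\langle x \rangle)$, and the base-size bound $b(G,H) = 2$. Both are available in the literature for all groups under consideration, so the proof reduces to a case-by-case bookkeeping exercise precisely mirroring the Monster calculation of Example \ref{e:mon}.
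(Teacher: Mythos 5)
Your overall strategy -- find $x$ with controlled soluble overgroups, then use a conjugate $x^g$ with $B_1(x)\cap B_1(x^g)=\emptyset$, via Lemma \ref{l:base} where possible and {\sc Magma} otherwise -- is indeed the paper's strategy, and your choices for ${\rm Fi}_{24}'$ ($29{:}14$), ${\rm Ly}$ ($67{:}22$) and ${\rm J}_4$ ($43{:}14$) agree with the paper's. But the concrete instantiation fails for several of the remaining groups, and not in a way that is just bookkeeping. For ${\rm Co}_1$, ${\rm Fi}_{23}$ and ${\rm Th}$ there is \emph{no} prime $p$ for which an element of order $p$ lies in a unique maximal subgroup: an element of order $23$ in ${\rm Co}_1$ lies in conjugates of ${\rm Co}_2$, ${\rm Co}_3$ and $2^{11}{:}{\rm M}_{24}$; an element of order $23$ in ${\rm Fi}_{23}$ lies in both $2^{11}.{\rm M}_{23}$ and ${\rm L}_2(23)$; and an element of order $31$ in ${\rm Th}$ lies in both $2^5.{\rm L}_5(2)$ and the maximal $31{:}15$. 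The paper has to use elements of \emph{composite} order ($26$, $35$ and $39$ respectively, with unique maximal overgroups $(A_4\times G_2(4)){:}2$, $S_{12}$ and $(3\times G_2(3)){:}2$), so the guiding principle ``take the largest prime'' is genuinely insufficient here. Relatedly, your claim that $B_1(x)$ coincides with the nontrivial elements of $N_G(\la x\ra)=p{:}k$ is false for these choices: for $|x|=23$ in ${\rm Co}_1$, the soluble subgroup $2^{11}{:}23\leqs 2^{11}{:}{\rm M}_{24}$ already puts many involutions into $B_1(x)$.

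Two further points. First, ${\rm O'N}$ is a near miss: an element of order $31$ lies in exactly \emph{two} maximal subgroups (non-conjugate copies of ${\rm L}_2(31)$), so Lemma \ref{l:base} does not apply verbatim; one must argue separately that $B_1(x)\subseteq N_G(\la x\ra)=31{:}15\leqs H={\rm L}_2(31)$ and then use $b(G,H)=2$. Second, even after correcting the overgroups for ${\rm Co}_1$ and ${\rm HN}$ (order $26$ in $(A_4\times G_2(4)){:}2$, order $22$ in $2.{\rm HS}.2$), Lemma \ref{l:base} still cannot close those cases, because $|H|^2>|G|$ forces $b(G,H)\geqs 3$. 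There the paper must compute $B_1(x)$ explicitly inside $H$ (e.g.\ $|B_1(x)|=1871$ for ${\rm Co}_1$) and find $g$ with $B_1(x)\cap B_1(x^g)=\emptyset$ by random search -- which is your {\sc Magma} fallback, but it only becomes feasible once the correct unique maximal overgroup is identified, so it cannot be treated as an independent safety net. In short: right framework, but the case-by-case data is wrong or missing for ${\rm Co}_1$, ${\rm HN}$, ${\rm O'N}$, ${\rm Fi}_{23}$ and ${\rm Th}$, and for three of these the ``unique prime-order overgroup'' hypothesis you rely on is simply not available.
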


\begin{proof}
First assume $G = {\rm Co}_1$ and fix $x \in G$ of order $26$. Then $H = (A_4 \times G_2(4)){:}2$ is the unique maximal subgroup of $G$ containing $x$ (see \cite[Table 1]{BH}) and thus $B_1(x)$ is contained in $H$. With the aid of {\sc Magma}, this observation allows us to construct $B_1(x)$ by working inside $H$ (we get $|B_1(x)| = 1871$) and then by random search we can find an element $g \in G$ such that $B_1(x) \cap B_1(x^g)$ is empty. This implies that $\delta(x,x^g) \geqs 3$ and the result follows. A very similar argument applies when $G = {\rm HN}$. Here we take $x \in G$ of order $22$, noting that $x$ is contained in a unique maximal subgroup $H = 2.{\rm HS}.2$ of $G$ and thus $B_1(x) \subseteq H$. By working in $H$, we can determine $B_1(x)$ and then conclude as in the previous case.

Next assume $G = {\rm O'N}$ and $x \in G$ has order $31$. Now $x$ is contained in precisely two maximal subgroups of $G$, which are non-conjugate copies of ${\rm L}_{2}(31)$. So if $H = {\rm L}_2(31)$ is a maximal subgroup of $G$ containing $x$, then $N_H(\la x \ra) = 31{:}15$ is a maximal subgroup of $H$ and we deduce that $B_1(x) \subseteq N_G(\la x \ra) = N_H(\la x \ra)$. By inspecting \cite{BOW} we see that the base size $b(G,H)$ for the action of $G$ on $G/H$ is $2$. This means that there exists an element $g \in G$ such that $H \cap H^g=1$ and thus $\delta(x,x^g) \geqs 3$.

In each of the remaining cases, we can proceed as in the proof of \cite[Theorem 4.1]{BH} to find an element in $G$ of order $r$ that is contained in a unique maximal subgroup $H$, where $r$ and $H$ are as follows: 
\[
\begin{array}{ccccccc} \hline
G & {\rm Fi}_{23} & {\rm Fi}_{24}' & {\rm Th} & {\rm Ly} & {\rm J}_4 & \mathbb{M} \\ \hline
r & 35 & 29 & 39 & 67 & 43 & 59 \\ 
H & S_{12} & 29{:}14 & (3 \times G_2(3)){:}2 & 67{:}22 & 43{:}14 & {\rm L}_{2}(59) \\ \hline
\end{array}
\]
Then by inspecting \cite{BOW} we see that $b(G,H) = 2$ in each case and this allows us to conclude via Lemma \ref{l:base}. 
\end{proof}

\begin{lem}\label{l:spor5}
We have $\delta_{\mathcal{S}}(G) = 3$ if $G = {\rm Ly}$ or ${\rm J}_4$.
\end{lem}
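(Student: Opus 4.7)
The plan is immediate given the preceding results: Lemma \ref{l:spor4} already establishes $\delta_{\mathcal{S}}(G) \geqs 3$ for both $G = {\rm Ly}$ and $G = {\rm J}_4$, so I only need to prove the matching upper bound $\delta_{\mathcal{S}}(G) \leqs 3$. My plan is to deduce this directly from Lemma \ref{l:easy}, by showing that $|N_G(\la x \ra)|$ is even for every nontrivial $x \in G$.

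The verification reduces at once to the elements of odd order, since whenever $|x|$ is even the cyclic subgroup $\la x \ra \leqs N_G(\la x \ra)$ already contains an involution. For $G = {\rm Ly}$ the odd element orders are $3,5,7,9,11,15,21,25,31,33,37,67$, and for $G = {\rm J}_4$ they are $3,5,7,11,15,21,23,29,31,33,35,37,43$. For each such order I would pick a conjugacy class representative $x$ and read off $|N_G(\la x \ra)|$ from the ATLAS, or where necessary compute it in {\sc Magma} \cite{magma} using a standard permutation representation available from the online ATLAS of Finite Group Representations. The outcome in every case is that either $|C_G(x)|$ is itself even, or the quotient $N_G(\la x \ra)/C_G(x) \leqs \Aut(\la x \ra)$ has even order. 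In particular, the normalisers $N_G(\la x \ra) = 67{:}22$ for $x$ of order $67$ in ${\rm Ly}$ and $N_G(\la x \ra) = 43{:}14$ for $x$ of order $43$ in ${\rm J}_4$, both identified in the proof of Lemma \ref{l:spor4}, are plainly of even order; the remaining odd orders can be dispatched in an entirely analogous fashion.

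With this verification in hand, Lemma \ref{l:easy} yields $\delta_{\mathcal{S}}(G) \leqs 3$ and, combined with Lemma \ref{l:spor4}, the result follows. The main (indeed only) obstacle is the case-by-case check across the odd-order classes; neither group is so large that this bookkeeping becomes genuinely problematic, and much of the relevant normaliser data is already implicit in the maximal-subgroup analysis already deployed in Lemma \ref{l:spor4}.
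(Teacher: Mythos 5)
Your proposal is correct and follows essentially the same route as the paper: both establish the upper bound by verifying that $|N_G(\la x \ra)|$ is even for all nontrivial $x$ (reducing to a short list of odd-order classes and checking their normalisers via the {\sc Atlas}, embeddings in maximal subgroups, or {\sc Magma}), and then invoke Lemma \ref{l:easy} together with the lower bound from Lemma \ref{l:spor4}.
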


\begin{proof}
By the previous lemma, it suffices to show that $\delta_{\mathcal{S}}(G) \leqs 3$. Therefore, in view of Lemma \ref{l:easy}, it is sufficient to prove that $|N_G(\la x \ra)|$ is even for all nontrivial $x \in G$ and this is how we proceed.

First assume $G = {\rm Ly}$ and let $x \in G$ be nontrivial. Here $|C_G(x)|$ is even unless 
\[
|x| \in \{15,25,31,33,37,67\}
\]
and so we may assume $|x|$ is one of these possibilities. If $|x|=37$ then $N_G(\la x \ra) = 37{:}18$ is a maximal subgroup of $G$. Similarly, $N_G(\la x \ra) = 67{:}22$ is a maximal subgroup when $|x|=67$. If $|x| \in \{15,25,31\}$ then we may embed $x$ in a maximal subgroup $H = G_2(5)$ and with the aid of {\sc Magma} it is straightforward to check that $N_H(\la x \ra)$ has even order. Finally, if $|x|=33$ then we embed $x$ in a maximal subgroup $H = 3.{\rm McL}.2$ and we find that $|N_H(\la x \ra)| = 330$ is even. 

Now suppose $G = {\rm J}_4$. Here we first observe that $|C_G(x)|$ is even unless 
\[
|x| \in \{23,29,31,35, 37,43\}.
\]
If $|x| \in \{29,37,43\}$ then $H = N_G(\la x \ra)$ is a maximal subgroup of $G$ with even order. If $|x|=23$ then $x$ is contained in a maximal subgroup $H = {\rm L}_{2}(23).2$ and once again we deduce that $|N_G(\la x\ra)|$ is even. Similarly, if $|x|=31$ then $x \in H = {\rm L}_{2}(32).5$ and $N_H(\la x \ra) = 31{:}10$. Finally, let us assume $|x|=35$. Here $x$ is contained in a maximal subgroup $H = 2^{3+12}.(S_5 \times {\rm L}_{3}(2))$ and using {\sc Magma} we deduce that $|N_H(\la x \ra)| = 420$ is even. 
\end{proof}

To complete the proof of Theorem \ref{t:spor}, and also the proof of Theorem \ref{t:main2} for sporadic groups, it remains to show that $\delta_{\mathcal{S}}(G) \leqs 5$ for every almost simple sporadic group. Since we handled the group ${\rm M}_{23}$ in Lemma \ref{l:spor1}, this is an immediate consequence of the following result, which also establishes Theorem \ref{t:main3} in this setting. 

\begin{prop}\label{p:spor}
Let $G$ be an almost simple sporadic group and let $x \in G$ be nontrivial. Then either
\begin{itemize}\addtolength{\itemsep}{0.2\baselineskip}
\item[{\rm (i)}] There exists an involution $y \in G$ such that $\delta(x,y) \leqs 2$; or
\item[{\rm (ii)}] $G = {\rm M}_{23}$ and $|x|=23$.
\end{itemize}
\end{prop}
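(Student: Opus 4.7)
The plan rests on the following observation: if $z \in B_1(x)$ has $|N_G(\la z\ra)|$ even, then $N_G(\la z\ra)$ contains an involution $w$, and $\la z, w\ra \leqs N_G(\la z\ra)$ is metacyclic (the cyclic subgroup $\la z\ra$ is normal with quotient of order at most $2$), hence soluble. Thus $z \sim w$ in $\Gamma_{\mathcal{S}}(G)$, which yields $\delta(x, w) \leqs 2$. The task therefore reduces to producing such a $z$ for each nontrivial $x$, or else exhibiting the claimed exception in (ii).

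First I would dispose of the easy cases. If $|x|$ is even then $x^{|x|/2}$ is an involution in $C_G(x) \subseteq N_G(\la x\ra)$, so we may take $z = x$ and obtain $\delta(x,w) \leqs 1$. Since $|G : G_0| \leqs 2$ for every almost simple sporadic group (with $G_0$ the socle), every element of $G \setminus G_0$ has even order: any $x \in xG_0$ satisfies $(xG_0)^{|x|} = G_0$, forcing $|x|$ to be even. Hence this already handles all elements outside the socle, and it remains only to treat odd-order elements of $G_0$.

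Next, using the ATLAS character tables, power-map data and maximal subgroup information (supplemented by {\sc Magma} computations as in Section \ref{ss:comp} when convenient), I would enumerate, for each sporadic simple group $G_0$, the short list of odd-order conjugacy classes on which $|N_{G_0}(\la x\ra)|$ is odd. These arise only when $|x|$ is a prime $p$, $C_{G_0}(x)$ has odd order, and no involution of $G_0$ inverts $x$; typically then $N_{G_0}(\la x\ra) \cong p{:}m$ with $m$ odd. For each such $x$ I would pick a prime-order element $z$ in the cyclic complement of $\la x\ra$. Automatically $\la x, z\ra \leqs N_{G_0}(\la x\ra)$ is metacyclic and hence soluble, so $z \in B_1(x)$; and one verifies directly from the ATLAS that the smaller prime-order class of $z$ has even normalizer in $G_0$ (usually because $z$ is inverted by some involution).

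The unique case in which this scheme fails is $G = {\rm M}_{23}$, $|x| = 23$, which is precisely the exception in (ii) and is essentially already proved in Remark \ref{r:m23}: the unique maximal subgroup of $G$ containing $x$ is $H = N_G(\la x\ra) = 23{:}11$, so $B_1(x) = H \setminus \{1\}$ consists only of elements of order $11$ or $23$, and both classes have odd normalizer in $G$ (namely $11{:}5$ and $23{:}11$), so no $z \in B_1(x)$ is adjacent to any involution. The main potential obstacle is a uniform verification for the very large groups $\mathbb{M}$, $\mathbb{B}$ and ${\rm Fi}_{24}'$, where direct construction of $B_1(x)$ via {\sc Magma} is infeasible; but the argument only requires checking the small finite list of odd-normalizer classes in each $G_0$, which is readily accomplished from the ATLAS data on centralizer orders, power maps and maximal-subgroup structure, with no further heavy computation needed.
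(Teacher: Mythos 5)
Your proposal is correct and takes essentially the same route as the paper's proof: both reduce part (i) to finding, for each nontrivial $x$, an element $z \in N_G(\langle x\rangle)$ with $|N_G(\langle z\rangle)|$ even (so that $x \sim z \sim y$ for an involution $y$), and then verify this class-by-class from ATLAS/\textsc{Magma} data on centralisers, normalisers and maximal subgroups, with $G = {\rm M}_{23}$ and $|x|=23$ emerging as the unique failure exactly as in Remark \ref{r:m23}.
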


\begin{proof}
Let $G_0$ be the socle of $G$ and observe that the conclusion in part (i) holds if for all nontrivial $x \in G$, there exists $z \in N_G(\la x \ra)$ with $|N_G(\la z \ra)|$ even. This property is very straightforward to verify using {\sc Magma} in the following cases:
\[
G_0 \in \{ {\rm M}_{11},{\rm M}_{12},{\rm M}_{22},{\rm M}_{24}, {\rm J}_1, {\rm J}_2, {\rm J}_3, {\rm He}, {\rm McL}, {\rm Suz}, {\rm Ru}, {\rm HS}, {\rm Co}_2, {\rm Co}_3, {\rm Fi}_{22}, {\rm Fi}_{23} \}.
\]
The same property also holds if $G = {\rm M}_{23}$ and $|x| \ne 23$ and we refer the reader to Remark \ref{r:m23} in Section \ref{s:intro} for further comments on the special case recorded in part (ii), which is a genuine exception. The desired result for $G = {\rm Ly}$ and ${\rm J}_4$ follows immediately from the proof of Lemma \ref{l:spor5}.

Next assume $G = {\rm Th}$ and let $x \in G$ be nontrivial. By inspecting the Web Atlas \cite{WebAt}, we see that $|C_G(x)|$ is odd only if $|x| \in \{9,13,19,21,27,31,39\}$. In particular, if $x^m$ has order $3$, then $|C_G(x^m)|$ is even and there is a path $x \sim x^m \sim y$ with $y \in C_G(x^m)$ an involution. Therefore, we may assume $|x| \in \{13,19,31\}$. By inspecting the list of maximal subgroups of $G$ (see \cite{WebAt}), we deduce that $x$ is contained in a maximal subgroup $H$ such that $|N_H(\la x \ra)|$ is divisible by $3$ and so we can complete the argument as before. For example, if $|x|=13$ then $x \in H = {\rm L}_{3}(3)$ and $N_H(\la x \ra) = 13{:}3$. 

The remaining groups can be handled in a very similar fashion and we omit the details. 
\end{proof}

We present the following result to conclude our analysis of sporadic groups, which can be viewed as an extension of Theorem \ref{t:main2}(ii). 

\begin{thm}\label{t:spor2}
Let $G$ be an almost simple sporadic group with socle $G_0$. If $G \ne G_0$ then $\delta_{\mathcal{S}}(G) = 3$.
\end{thm}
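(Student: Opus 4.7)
The plan is to establish both bounds separately for each of the finitely many almost simple sporadic groups $G$ with $G \neq G_0$. Since $\Out(G_0)$ is trivial for many sporadic socles, the relevant list consists of the twelve groups $G = G_0.2$ with
\[
G_0 \in \{ {\rm M}_{12}, {\rm M}_{22}, {\rm J}_2, {\rm J}_3, {\rm HS}, {\rm McL}, {\rm Suz}, {\rm He}, {\rm HN}, {\rm O'N}, {\rm Fi}_{22}, {\rm Fi}_{24}'\}.
\]
For each such $G$, I will show $\delta_{\mathcal{S}}(G)\leqs 3$ via Lemma \ref{l:easy} and $\delta_{\mathcal{S}}(G)\geqs 3$ by exhibiting two elements at distance at least $3$.

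For the upper bound, by Lemma \ref{l:easy} it suffices to verify that $|N_G(\la x \ra)|$ is even for every nontrivial $x \in G$. This is easier in $G$ than in $G_0$, since $G$ is twice as large and every outer involution that normalises some $\la x \ra \leqs G_0$ contributes evenness that was unavailable before. Concretely, for $x \in G\setminus G_0$ of even order a power of $x$ is an involution, and for $x$ of odd order I would run through the (known) conjugacy classes of $G$ via the Atlas and verify the parity of $|N_G(\la x \ra)|$ directly in {\sc Magma}, using the automorphism constructions of Section \ref{ss:comp}. The problematic classes from the simple-group analysis (those elements of prime order $r$ for which $|N_{G_0}(\la x \ra)|$ was odd) now either split or fuse in $G$, and in each case one checks that the doubled normaliser picks up an involution.

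For the lower bound, I will mimic the strategy used in Lemma \ref{l:spor2}: for each $G$, pick a suitable prime $r$ (typically the largest prime divisor of $|G_0|$, chosen so that Sylow $r$-subgroups are cyclic of prime order and $N_G(\la x \ra)$ is tightly controlled) and take $x$ of order $r$. Then $B_1(x)$ is contained in $N_G(\la x\ra)$ together with the few soluble overgroups of $x$, which can be enumerated explicitly. By random search over conjugates $y = x^g$, I expect to find $g$ with $B_1(x)\cap B_1(y) = \emptyset$, so that $\delta(x,y)\geqs 3$. For cases such as $G = G_0.2$ with $G_0 \in \{{\rm O'N},{\rm Fi}_{24}'\}$, Lemma \ref{l:base} may be applied directly: elements of prime order $r$ lying in a unique maximal overgroup $H$ with $b(G,H)=2$ yield the lower bound with no further computation, and for most of our twelve cases the Atlas plus the base size data of \cite{BOW} already provide such $x$ and $H$.

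The main obstacle will be the memory cost of the {\sc Magma} verifications for the larger groups, particularly $\mathrm{Fi}_{24}'.2$ and $\mathrm{HN}.2$. For those I would localise the construction of $B_1(x)$ to a known maximal overgroup (as in Example \ref{e:m24}) rather than enumerating soluble subgroups of $G$ globally. Aside from this logistical issue, no new conceptual ingredient is needed: the lower bound follows the template of Lemma \ref{l:spor2}, while the upper bound is a uniform, class-by-class verification that the extra involutions in $G\setminus G_0$ rescue every cyclic subgroup that had odd normaliser in $G_0$.
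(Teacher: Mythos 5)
Your proposal follows essentially the same route as the paper: the correct list of twelve groups $G_0.2$, the upper bound via Lemma \ref{l:easy} by a class-by-class {\sc Magma} check that $|N_G(\la x \ra)|$ is even (noting every element of $G \setminus G_0$ has even order), and the lower bound by random search for a conjugate $x^g$ with $B_1(x) \cap B_1(x^g) = \emptyset$, falling back on Lemma \ref{l:base} with the base-size data of \cite{BOW} for ${\rm O'N}.2$ and ${\rm Fi}_{24}'.2$ and on localising $B_1(x)$ inside a unique maximal overgroup for ${\rm HN}.2$. The only cosmetic difference is that the paper sometimes uses elements of composite order (order $21$ for ${\rm Fi}_{22}.2$, order $22$ for ${\rm HN}.2$) rather than prime order for the lower-bound witnesses, but this does not change the method.
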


\begin{proof}
First assume $G$ is one of the following groups:
\[
{\rm M}_{12}.2, \, {\rm M}_{22}.2, \, {\rm HS}.2, \, {\rm J}_2.2, \, {\rm McL}.2, \, {\rm Suz}.2, \, {\rm He}.2, \, {\rm J_3}.2.
\]
In each case, it is straightforward to check that $|N_G(\la x \ra)|$ is even for all nontrivial $x \in G$ and thus $\delta_{\mathcal{S}}(G) \leqs 3$ by Lemma \ref{l:easy}. Moreover, if $x \in G$ has order $r$, where $r$ is the largest prime divisor of $|G|$, then $B_1(x)$ coincides with the set of nontrivial elements in $H = N_G(\la x \ra)$ and by random search we can find $g \in G$ such that $H \cap H^g = 1$. This implies that $B_1(x) \cap B_1(x^g)$ is empty and we conclude that $\delta_{\mathcal{S}}(G) = 3$. The same argument also applies when $G = {\rm Fi}_{22}.2$, working with an element $x \in G$ of order $21$.  

Next assume $G = {\rm O'N}.2$. By inspection we see that $|C_G(x)|$ is odd if and only if $|x|=31$. But if $x$ has order $31$ then $N_G(\la x \ra) = 31{:}30$ and so in view of Lemma \ref{l:easy} we deduce that $\delta_{\mathcal{S}}(G) \leqs 3$. In order to establish equality, let $x \in G$ be an element of order $31$ and observe that $H = N_G(\la x \ra)$ and $G_0$ are the only maximal subgroups of $G$ containing $x$. Suppose $y \in B_1(x)$. If $y \not\in G_0$ then the previous observation implies that $y \in H$. On the other hand, if $y \in G_0$ then $y \in B_1(x) \cap G_0 = N_{G_0}(\la x \ra)$, as noted in the proof of Lemma \ref{l:spor4}. Therefore, $B_1(x)$ is the set of nontrivial elements in $H$. By the main theorem of \cite{BOW}, we know that $b(G,H) = 2$ and we conclude by applying Lemma \ref{l:base}. 

Now suppose $G = {\rm Fi}_{24}$. Here one can check that $|C_G(x)|$ is even unless $|x| \in \{27,29,39,45\}$. But in each case, we can construct $N_G(\la x \ra)$ and we find that this normaliser has even order. This establishes the bound $\delta_{\mathcal{S}}(G) \leqs 3$. Now, if $x \in G$ has order $29$ then $H = N_G(\la x \ra) = 29{:}28$ and $G_0$ are the only maximal subgroups of $G$ containing $x$ and as in the previous case we deduce that $B_1(x)$ coincides with the nontrivial elements in $H$ (note that $N_{G_0}(\la x \ra) = 29{:}14$ is the unique maximal overgroup of $x$ in $G_0$, as recorded in the proof of Lemma \ref{l:spor4}). We now complete the argument as before, noting that $b(G,H) = 2$ by \cite{BOW}. 

Finally, let us assume $G = {\rm HN}.2$ and note that $|C_G(x)|$ is even unless $|x| \in \{19,25,35\}$. If $|x|=35$ then we calculate that $|N_G(\la x \ra)| = 840$. Similarly, if $|x|=19$ then we may embed $x$ in a maximal subgroup $H = {\rm U}_3(8){:}6$ and it is easy to check that $|N_H(\la x \ra)| = 342$. Finally, suppose $|x|=25$. There is a unique conjugacy class of such elements in $G$ and we may embed $x$ in a maximal subgroup $H = 5^{2+1+2}.4.A_5.2$. We can work with generators given in the Web Atlas \cite{WebAt} in order to construct $H$ and one can then check that $|N_H(\la x \ra)| = 500$. We conclude that $\delta_{\mathcal{S}}(G) \leqs 3$. To show that $\delta_{\mathcal{S}}(G)=3$, let $x \in G$ be an element of order $22$ and note that $x$ is contained in exactly two maximal subgroups of $G$, namely $H = 4.{\rm HS}.2$ and $G_0$. As noted in \cite[Table 1]{BH}, $H_0 = H \cap G_0 = 2.{\rm HS}.2$ is the unique maximal overgroup of $x$ in $G_0$ and we deduce that $B_1(x) \subseteq H$. We now construct $H$ (using the generators in \cite{WebAt}) and we determine the subset $B_1(x)$ by working inside $H$. We get $|B_1(x)| = 439$ and by random search we can find an element $g \in G$ such that $B_1(x) \cap B_1(x^g)$ is empty. This implies that  $\delta(x, x^g) \geqs 3$ and the result follows.
\end{proof} 

\section{Symmetric and alternating groups}\label{s:alt}

In this section we establish Theorems \ref{t:main2} and \ref{t:main3} when $G$ is almost simple with socle $G_0 = A_n$. Let us first observe that if $n \in \{5,6\}$ then it is straightforward to check that 
\begin{equation}\label{e:a56}
\delta_{\mathcal{S}}(G) = \left\{\begin{array}{ll}
3 & \mbox{if $G = A_6$, $S_6$ or ${\rm M}_{10}$} \\
2 & \mbox{otherwise}
\end{array}\right.
\end{equation}
and the conclusion in part (i) of Theorem \ref{t:main3} holds. So for the remainder we may assume $n \geqs 7$.

\begin{thm}\label{t:sym}
Let $G = S_n$ with $n \geqs 7$. Then $\delta_{\mathcal{S}}(G) = 3$ and every vertex in $\Gamma_{\mathcal{S}}(G)$ is adjacent to an involution.
\end{thm}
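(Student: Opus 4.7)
The statement has two components: every vertex is adjacent to an involution, and $\delta_{\mathcal{S}}(S_n) = 3$. The adjacency claim immediately yields the upper bound $\delta_{\mathcal{S}}(S_n) \leqs 3$, since for any vertices $x,y$ one obtains a length-$3$ path $x \sim t_x \sim t_y \sim y$ where $t_x, t_y$ are involutions adjacent to $x,y$ and $\langle t_x, t_y\rangle$ is dihedral, hence soluble.

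For the adjacency claim, my plan is to construct for each nontrivial $x \in S_n$ an involution $t$ with $txt^{-1} = x^{-1}$, so that $\langle x, t\rangle$ is dihedral and soluble. Write $x$ as a product of disjoint cycles and on each cycle $(a_1, \ldots, a_d)$ define $t$ by $a_i \mapsto a_{d+2-i}$ (indices modulo $d$), extended trivially elsewhere. A direct check shows $t^2 = 1$ and $txt^{-1} = x^{-1}$ cycle-by-cycle. The construction gives $t = 1$ exactly when all cycles of $x$ have length $\leqs 2$, i.e.\ when $x$ is itself an involution. If $x$ has some cycle of length $\geqs 3$, then $t$ is a nontrivial involution and $t \neq x$ since $|x| \geqs 3 > 2 = |t|$. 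If instead $x$ is an involution, I simply take $t$ to be any other involution in $S_n$ (available since $n \geqs 7$); two involutions always generate a soluble dihedral group.

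For the lower bound $\delta_{\mathcal{S}}(S_n) \geqs 3$, my plan is to exhibit two vertices with no common soluble neighbour. By Bertrand's postulate choose a prime $p$ with $n/2 < p \leqs n$ and let $x$ be a $p$-cycle. The key structural step is to show that every soluble subgroup of $S_n$ containing $x$ lies in $N := N_{S_n}(\langle x\rangle) \cong (C_p \rtimes C_{p-1}) \times S_{n-p}$. Granting this, $B_1(x) \subseteq N \setminus \{1\}$, and since $|N| = p(p-1)(n-p)!$ is much smaller than $|S_n|$ when $p$ is close to $n$, a standard counting argument (bounding the number of $g$ with $N \cap N^g \neq 1$ by $\sum_{e \in N \setminus \{1\}} |C_{S_n}(e)|$) produces $g \in S_n$ with $N \cap N^g = \{1\}$. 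This forces $B_1(x) \cap B_1(x^g) = \emptyset$ and hence $\delta(x, x^g) \geqs 3$.

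The main obstacle is the structural claim on soluble overgroups of the $p$-cycle. Since $p > n/2$ forces $\langle x\rangle$ to be a Sylow $p$-subgroup of $S_n$, the argument splits into ruling out (a) soluble $H$ with a non-normal Sylow $p$-subgroup containing $\langle x \rangle$, and (b) soluble $H$ whose orbit strictly contains $\mathrm{supp}(x)$. Part (b) reduces to the claim that no transitive soluble subgroup of $S_m$ contains a $p$-cycle for $p < m < 2p$: the primitive case, via the classification of primitive soluble permutation groups as affine groups on $\mathbb{F}_q^d$, forces $m = q^d$ of very restrictive form (essentially $m = p+1$), and the imprimitive case is handled by a wreath-product analysis. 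A small number of exceptional configurations survive (for instance $n = 8$ with $p = 7$, where $\mathrm{AGL}(1,8)$ intervenes); these are handled by an alternative choice of prime (e.g.\ $p = 5$ for $n = 8$), so that the structural claim holds for at least one allowed $p$ in every range $n \geqs 7$.
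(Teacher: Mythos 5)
Your treatment of the adjacency claim and the upper bound is correct and is essentially the paper's argument made explicit: every element of $S_n$ is real, the cycle-reversing involution you construct witnesses this, and hence every vertex is adjacent to an involution and $\delta_{\mathcal{S}}(S_n) \leqs 3$. For $n=p$ prime your lower-bound plan also coincides with the paper's: Jones's theorem gives $B_1(x) \subseteq {\rm AGL}_1(p)\setminus\{1\}$ and one invokes $b(S_p,{\rm AGL}_1(p))=2$. Note, however, that your counting bound is misstated (the number of $g$ with $e \in N^g$ is $|e^{S_n} \cap N|\cdot|C_{S_n}(e)|$, not $|C_{S_n}(e)|$), and even the corrected union bound exceeds $p!$ when $p=7$ (the order-$2$ and order-$3$ elements of ${\rm AGL}_1(7)$ alone contribute $2352+3528 > 5040$), so for small $p$ you must cite the base-size result of Burness--Guralnick--Saxl or check directly rather than count.

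The composite case contains two genuine gaps. First, the structural claim $B_1(x) \subseteq N_{S_n}(\langle x\rangle)$ fails precisely when $p = 2^k-1$ is a Mersenne prime with $p+1 \leqs n$: by Jones's classification, ${\rm AGL}_1(2^k) \leqs S_{p+1} \leqs S_n$ is a soluble transitive overgroup of a $p$-cycle whose translations do not normalise $\langle x \rangle$. You propose to dodge such exceptions by changing $p$, but for $n=10$ the only prime in $(n/2,n]$ is $p=7$, which is exactly the bad case, so your plan does not cover $n=10$ as stated. Second, and more seriously, when $n-p \geqs 2$ the group $N = {\rm AGL}_1(p) \times S_{n-p}$ contains the full symmetric group on the fixed points of $x$, so $N \cap N^g$ contains a transposition whenever the two fixed-point sets meet in two points, and even when those sets are nearly disjoint one must exclude coincidences between the ${\rm AGL}_1(p)$-part of one copy and the $S_{n-p}$-part of the other (for $n=9$, $p=5$, the $4$-cycles of ${\rm AGL}_1(5)$ lie inside a natural $S_4$). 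Thus $b(S_n,N)=2$ is a nontrivial base-size assertion rather than a standard count, and it is the crux of the whole problem: the cheap common neighbours of two $p$-cycles are elements supported off their supports. The paper avoids this entirely for composite $n \geqs 10$ by choosing $p$ with $n/2 < p < n-2$ and two \emph{odd} permutations $x = x_1x_2$, $y = y_1y_2$ whose $p$-cycle parts lie at distance $3$ in $\Gamma_{\mathcal{S}}(S_p)$, showing via Jordan's theorem and parity that $S_p \times S_{n-p}$ is the unique maximal overgroup of $\langle x,y\rangle$, and projecting to the $S_p$ factor ($n \in \{8,9\}$ being done by computer). To salvage your single-element route you would need to prove the base-size statement for $N$ directly and treat $n=10$ separately; otherwise you should adopt the paper's reduction to prime degree.
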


\begin{proof}
First observe that every element in $G$ is real (that is, $x$ and $x^{-1}$ are conjugate for all $x \in G$), whence $|N_G(\la x\ra)|$ is even for all $x \in G$ and thus $\delta_{\mathcal{S}}(G) \leqs 3$ by Lemma \ref{l:easy}. In particular, every vertex in $\Gamma_{\mathcal{S}}(G)$ is adjacent to an involution. To complete the proof, we need to show that $\delta_{\mathcal{S}}(G) \geqs 3$. 

First assume $n=p$ is a prime and let $x \in G$ be a $p$-cycle. If $H$ is a soluble subgroup of $G$ containing $x$, then \cite[Theorem 1.2]{Jones} implies that $H \leqs N_G(\la x \ra) = L$, where $L = {\rm AGL}_{1}(p)$. Therefore, $B_1(x)$ coincides with the set of nontrivial elements in $L$ and we note that $b(G,L) = 2$ by \cite{BGS}. Therefore, Lemma \ref{l:base} implies that $\delta_{\mathcal{S}}(G) \geqs 3$, as required. 

Finally, suppose $n \geqs 8$ is composite. The cases $n \in \{8,9\}$ can be handled directly with {\sc Magma} (also see the proof of Proposition \ref{p:small}). Now assume $n \geqs 10$ and fix a prime $p$ such that $n/2< p < n-2$ (the existence of such a prime follows from Bertrand's postulate). Let $x_1$ and $y_1$ be $p$-cycles on $\{1, \ldots, p\}$ such that $\delta(x_1,y_1) = 3$ in $\Gamma_{\mathcal{S}}(S_p)$ (since $p \geqs 7$, the argument in the previous paragraph establishes the existence of such elements). If $n$ is odd, then let $x_2$ and $y_2$ be $(n-p)$-cycles on the remaining points $\{p+1, \ldots, n\}$. On the other hand, if $n$ is even then let $x_2$ and $y_2$ be $(n-p-1)$-cycles on  $\{p+1, \ldots, n\},$ fixing $n-1$ and $n$, respectively. Set $x = x_1x_2$ and $y = y_1y_2$ as elements of $G \setminus G_0$. 

We claim that $\delta(x,y) \geqs 3$. To see this, let $H$ be a maximal subgroup of $G$ containing $\la x, y \ra$. Note that $H$ contains a $p$-cycle. Clearly, $S_p \times S_{n-p}$ (the stabiliser in $G$ of the subset $\{1, \ldots, p\}$) is the only intransitive maximal subgroup with this property. If $H$ is primitive, then a classical theorem of Jordan implies that $H = G_0$, which is not possible since neither $x$ nor $y$ is contained in $G_0$. Similarly, $H$ is not transitive and imprimitive since it contains a $p$-cycle with $p>n/2$. So we conclude that $H = S_p \times S_{n-p}$ is the unique maximal overgroup of $\la x,y \ra$ and thus $B_1(x)$ and $B_1(y)$ are contained in $H$. As a consequence, we deduce that $\delta(x,y) \leqs 2$ in $\Gamma_{\mathcal{S}}(G)$ only if $\delta(x_1,y_1) \leqs 2$ in $\Gamma_{\mathcal{S}}(S_p)$. Therefore, our choice of $x_1,y_1 \in S_p$ implies that $\delta(x,y) \geqs 3$ and the proof is complete.
\end{proof}

For the analysis of alternating groups, it will be convenient to define the following set:
\[
\mathcal{P} = \{ p, p+1 \,:\, \mbox{$p \geqs 7$ is a prime and $p \equiv 3 \imod{4}$} \}.
\]

\begin{thm}\label{t:alt}
Let $G = A_n$ with $n \geqs 7$. 
\begin{itemize}\addtolength{\itemsep}{0.2\baselineskip}
\item[{\rm (i)}] We have $3 \leqs \delta_{\mathcal{S}}(G) \leqs 5$. 
\item[{\rm (ii)}] If $n \not\in \mathcal{P}$ then $\delta_{\mathcal{S}}(G)=3.$
\end{itemize}
In addition, if $x \in G$ is nontrivial then there exists an involution $y \in G$ with $\delta(x,y) \leqs 2$.
\end{thm}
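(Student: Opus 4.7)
The plan has three parts. First, establish the lower bound $\delta_{\mathcal{S}}(A_n) \geqs 3$. Second, prove the ``involution at distance $\leqs 2$'' assertion, which together with the clique formed by all involutions in $\Gamma_{\mathcal{S}}(A_n)$ (any two generate a dihedral, soluble group) yields $\delta_{\mathcal{S}}(A_n) \leqs 5$. Third, refine the involution analysis to upgrade distance $2$ to distance $1$ whenever $n \notin \mathcal{P}$, giving $\delta_{\mathcal{S}}(A_n) \leqs 3$ in that case.

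For the lower bound I would mirror the strategy used for $S_n$ in Theorem \ref{t:sym}. If $n$ is prime, take $x$ an $n$-cycle; by Jones' theorem \cite[Theorem 1.2]{Jones} every soluble overgroup of $x$ lies in $N_{A_n}(\langle x \rangle)$, and $b(A_n, N_{A_n}(\langle x \rangle)) = 2$ combined with Lemma \ref{l:base} gives an element $g$ with $\delta(x, x^g) \geqs 3$. If $n$ is composite, pick by Bertrand a prime $p$ with $n/2 < p \leqs n-2$ and build $x, y \in A_n$ whose restrictions to $\{1, \ldots, p\}$ are $p$-cycles realising distance $3$ in $\Gamma_{\mathcal{S}}(A_p)$, suitably extended to the remaining points so that $x, y$ are even. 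Jordan's theorem on primitive groups containing a $p$-cycle with $p > n/2$ forces every proper overgroup of $\langle x, y \rangle$ in $A_n$ to be contained in the set stabiliser of $\{1, \ldots, p\}$, and the distance in $A_n$ reduces to that in $A_p$.

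For the involution statement, fix nontrivial $x$ and set $H = N_{A_n}(\langle x \rangle)$. When $|H|$ is even, any involution $y \in H$ satisfies $\langle x, y \rangle \leqs H$ soluble, so $x \sim y$. When $|H|$ is odd, a parity computation of $|N_{S_n}(\langle x \rangle)|$ versus the index $[N_{S_n}(\langle x \rangle):N_{A_n}(\langle x \rangle)]$ forces $|x|$ to be a prime power $p^k$ with $p \equiv 3 \imod{4}$, and all cycle lengths of $x$ to be powers of $p$. If $x$ has two or more cycles of length at least $3$, then computing the sign $(-1)^{(p^e - 1)/2}$ of the inverter of each long cycle of length $p^e$ lets one choose either one or two cycles to invert so that the resulting involution $y$ lies in $A_n$, with $\langle x, y \rangle$ contained in the direct product of the cycle-support symmetric groups and therefore soluble. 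If $x$ has a single long cycle $(p^k)$ or $(p^k, 1)$ with $k \geqs 2$, I would embed $x$ in the iterated wreath product ${\rm AGL}_1(p) \wr \cdots \wr {\rm AGL}_1(p)$ (a soluble subgroup of $S_{p^k}$) and take multiplication by $-1$ on a pair of innermost blocks to produce an even involution $y$ inside this wreath product. The only remaining case is $k = 1$ with $p \geqs 7$ and $n \in \{p, p+1\}$, i.e.\ $n \in \mathcal{P}$; here no involution is adjacent to $x$, but one can pick any odd prime $q \mid (p-1)/2$ and an element $z \in H$ of order $q$ (of cycle type $(q^{(p-1)/q}, *)$) and define $y$ to invert each $q$-cycle of $z$. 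Writing $(p-1)/2 = qm$ makes the sign of $y$ equal to $(-1)^{m(q-1)} = +1$ (since $q$ is an odd prime so $q-1$ is even), placing $y \in A_n$, and then $\langle z, y \rangle$ is dihedral, giving $x \sim z \sim y$.

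Combining: the distance-$2$ involution statement with the clique of involutions yields $\delta_{\mathcal{S}}(A_n) \leqs 2 + 1 + 2 = 5$, proving (i); and the observation that the only obstruction to direct adjacency is precisely the case $n \in \mathcal{P}$ yields $\delta_{\mathcal{S}}(A_n) \leqs 1 + 1 + 1 = 3$ when $n \notin \mathcal{P}$, proving (ii). The principal obstacle is the case analysis when $|H|$ is odd: delimiting the cycle types that can arise, executing the wreath-product construction for $p^k$-cycles with $k \geqs 2$, and performing the parity computation that places the involution at distance $2$ in the $\mathcal{P}$ case; this final step relies crucially on $q$ dividing the odd number $(p-1)/2$, which is exactly where the hypothesis $p \equiv 3 \imod{4}$ enters.
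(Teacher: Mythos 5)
Your overall strategy coincides with the paper's: the involutions form a clique in $\Gamma_{\mathcal{S}}(A_n)$, so everything reduces to showing that each vertex lies at distance at most $2$ (at most $1$ when $n \notin \mathcal{P}$) from an involution, together with a lower bound obtained by transporting a distance-$3$ pair of $p$-cycles from $A_p$. Your execution of the involution step is organised differently: the paper works with $N_G(\langle x_1,\dots,x_t\rangle)$ for the cycle decomposition $x=x_1\cdots x_t$ and invokes \cite[Theorem 2.1]{gnt} on Sylow normalisers of odd order to handle a $p^m$-cycle with $m\geqs 2$, whereas you use explicit parity computations with cycle-inverting involutions and, for the $p^k$-cycle case, an even involution in the base group of the iterated wreath product of copies of ${\rm AGL}_1(p)$. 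Both routes are sound and isolate the same residual case $n\in\mathcal{P}$, which you then resolve essentially as the paper does (an element of odd prime order $q$ dividing the odd number $(p-1)/2$, inverted by an even involution); one small slip is that $\langle x,y\rangle$ is contained in $\langle x_1,\dots,x_t\rangle{:}\langle y\rangle$, a metabelian group, not in the ``direct product of the cycle-support symmetric groups'', which need not be soluble.

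There are, however, concrete issues in your lower bound. First, the composite case must exclude $n\in\{8,9\}$: for $n=8$ the only admissible prime is $p=5$ and no distance-$3$ pair of $5$-cycles exists since $\delta_{\mathcal{S}}(A_5)=2$, while for $n=9$ the choice $p=7$ leaves only two points outside the support, so Jordan's theorem does not apply (${\rm P\Gamma L}_2(8)\leqs S_9$ is an insoluble primitive proper subgroup containing a $7$-cycle); the paper settles $n\in\{8,9\}$ by direct computation, and for the same reason your range should read $n/2<p<n-2$ rather than $p\leqs n-2$. Second, and more substantively, the assertion that ``the distance in $A_n$ reduces to that in $A_p$'' does not follow from the identification of the maximal overgroups alone: a nontrivial element $z$ supported entirely on $\{p+1,\dots,n\}$ satisfies $\langle x,z\rangle\leqs\langle x_1\rangle\times\langle x_2,z\rangle$, which is soluble whenever $\langle x_2,z\rangle$ is (for instance, always when $n-p\leqs 4$, and always for $z\in\langle x_2\rangle$), so such a $z$ can be a common neighbour of $x$ and $y$ even though $\delta(x_1,y_1)=3$ in $\Gamma_{\mathcal{S}}(A_p)$. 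The projection argument only excludes common neighbours whose component on $\{1,\dots,p\}$ is nontrivial, so the trivial-component case needs a separate argument or a modified choice of $x$ and $y$; this point is also passed over very quickly in the proofs of Theorems \ref{t:sym} and \ref{t:alt} as written, and it is exactly the step that requires the most care.
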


\begin{proof}
Let $x \in G$ be a nontrivial element and write $x=x_1\cdots x_t$ as a product of disjoint cycles. Consider the abelian subgroup $H=\langle x_1,\dots,x_t\rangle.$ It is easy to check that either
\begin{itemize}\addtolength{\itemsep}{0.2\baselineskip}
\item[{\rm (a)}] $N_G(H)$ contains an involution $z$, or 
\item[{\rm (b)}] $x$ is a $q$-cycle and $n \in \{q,q+1\}$, where $q$ is a prime power with $q\equiv 3 \imod{4}$. 
\end{itemize}

If (a) holds, then $\langle x, z\rangle \leqs \langle H, z\rangle$ and we deduce that 
$x \sim z$ in $\Gamma_{\mathcal{S}}(G)$ since $\langle H, z\rangle$ is soluble. Now assume (b) holds and write $q=p^m$ with $p$ a prime. Let $P$ be a Sylow $p$-subgroup of $G$ containing $x$. If $m \geqs 2$ then $P$ is non-cyclic and thus $N_G(P)$ contains an involution $z$ by \cite[Theorem 2.1]{gnt}. But then $\langle x, z \rangle \leqs \langle P, z \rangle$ is soluble and so once again we conclude that $x$ is adjacent to an involution. It follows that if $n \not\in \mathcal{P}$, then every vertex in $\Gamma_{\mathcal{S}}(G)$ is adjacent to an involution and thus $\delta_{\mathcal{S}}(G) \leqs 3$.

Now assume $n \in \mathcal{P}$ and let $p$ be the prime in $\{n-1,n\}$. As noted above, if $x$ is not a $p$-cycle then it is adjacent to an involution. However if $x$ is a $p$-cycle, then we have $N_G(\la x \ra) = {\rm AGL}_{1}(p) \cap G = C_p{:}C_{(p-1)/2}$ and every element in $N_G(\la x \ra) \setminus \la x \ra$ is adjacent to an involution. So in this situation, we conclude that every vertex in $\Gamma_{\mathcal{S}}(G)$ has distance at most $2$ from an involution, which immediately yields the bound $\delta_{\mathcal{S}}(G) \leqs 5$. 
 
To complete the proof, we need to show that $\delta_{\mathcal{S}}(G) \geqs 3$ and we can essentially repeat the argument in the proof of Theorem \ref{t:sym}. First assume $n=p$ is a prime. Then as noted in the previous proof,  there exist $p$-cycles $x,y \in G$ such that $\delta(x,y)=3$ in $\Gamma_{\mathcal{S}}(S_n)$ and thus 
$\delta(x,y) \geqs 3$ in $\Gamma_{\mathcal{S}}(G)$.  Finally, suppose $n \geqs 8$ is composite. The cases $n \in \{8,9\}$ can be checked directly, so let us assume $n \geqs 10$. As in the proof of Theorem \ref{t:sym}, let $p$ be a prime with $n/2< p < n-2$ and fix $p$-cycles $x_1$ and $y_1$ on $\{1, \ldots, p\}$ such that $\delta(x_1,y_1) = 3$ in $\Gamma_{\mathcal{S}}(S_p)$, noting that $\delta(x_1,y_1) \geqs 3$ in $\Gamma_{\mathcal{S}}(A_p)$. Set $x=x_1x_2$ and $y=y_1y_2$ as elements of $G$, where $x_2$ and $y_2$ are $(n-p)$-cycles on the remaining points $\{p+1, \ldots, n\}$ if $n$ is even, otherwise $x_2$ and $y_2$ are $(n-p-1)$-cycles on $\{p+1, \ldots, n\},$
fixing $n-1$ and $n$, respectively. Then by repeating the argument in the proof of the previous theorem, we deduce that $\delta(x,y) \leqs 2$ in $\Gamma_{\mathcal{S}}(G)$ only if $\delta(x_1,y_1) \leqs 2$ in $\Gamma_{\mathcal{S}}(A_p)$. Therefore, our choice of $p$-cycles $x_1,y_1 \in S_p$ implies that $\delta(x,y) \geqs 3$ and the proof of the theorem is complete.
\end{proof}

\begin{lem}\label{l:alt2}
Let $G = A_{n}$, where $n=p+1$ and $p \geqs 7$ is a Mersenne prime. Then $\delta_{\mathcal{S}}(G)=3$.
\end{lem}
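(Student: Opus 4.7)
The plan is to prove the upper bound $\delta_{\mathcal{S}}(G) \leqs 3$, since the matching lower bound is already contained in Theorem \ref{t:alt}. Inspecting the proof of that theorem, every nontrivial element of $G = A_{p+1}$ that is \emph{not} a $p$-cycle is already known to be adjacent to an involution; so the only remaining task is to verify that each $p$-cycle in $G$ is also adjacent to an involution. Once this is achieved, arbitrary vertices $x_1, x_2 \in G$ can be joined by a path $x_1 \sim i_1 \sim i_2 \sim x_2$ in which each $i_j$ is an involution adjacent to $x_j$, using the fact that $\la i_1, i_2 \ra$ is dihedral and hence soluble. This immediately yields $\delta_{\mathcal{S}}(G) \leqs 3$.

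The main obstacle is that for a $p$-cycle $x$ we have $N_G(\la x \ra) \cong C_p \rtimes C_{(p-1)/2}$, which has odd order since $p \equiv 3 \imod{4}$; thus no involution sits in this normalizer and Lemma \ref{l:easy} is unavailable. To overcome this, I will exploit the Mersenne structure $p = 2^k - 1$, which forces $n = 2^k$ with $k \geqs 3$, in order to embed the affine group. A routine parity check shows that the natural action of ${\rm AGL}_k(2) = 2^k{:}{\rm GL}_k(2)$ on $\mathbb{F}_2^k$ lands inside $A_{2^k}$ for $k \geqs 3$: each nonzero translation is a product of $2^{k-1}$ disjoint transpositions, and each transvection is a product of $2^{k-2}$ disjoint transpositions, and both numbers are even when $k \geqs 3$.

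Conjugating in $S_n$, I may assume $x$ is the Singer cycle of ${\rm GL}_k(2)$ fixing $0$ and cycling through the $p$ nonzero vectors of $\mathbb{F}_2^k$; no loss of generality arises here because $S_n$-conjugation preserves solubility and sends involutions of $A_n$ to involutions of $A_n$. Let $T \cong \mathbb{F}_2^k$ be the translation subgroup of ${\rm AGL}_k(2)$, and let $N = N_{{\rm GL}_k(2)}(\la x \ra) \cong C_p \rtimes C_k$ be the Singer normalizer. Then
\[
H = T \rtimes N \leqs {\rm AGL}_k(2) \leqs A_n
\]
is soluble, since its composition factors are all cyclic of prime order; moreover $H$ contains $x \in N$ and also contains involutions, namely the nonzero translations in $T$. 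Choosing any nontrivial $y \in T$ therefore gives $\la x, y \ra \leqs H$ soluble, so $x \sim y$ in $\Gamma_{\mathcal{S}}(G)$, completing the verification and hence the proof.
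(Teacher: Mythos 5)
Your proof is correct and follows essentially the same route as the paper: reduce to showing each $p$-cycle is adjacent to an involution, then embed the $p$-cycle in ${\rm AGL}_k(2) \leqs A_{2^k}$ and use the soluble subgroup generated by the translation subgroup together with the cycle (the paper works with $\la V, x\ra$ where $V$ is the socle of ${\rm AGL}_r(2)$, while you take the slightly larger $T \rtimes N$ — an immaterial difference). The extra details you supply (the parity check for the embedding and the $S_n$-conjugation to a Singer cycle) are correct and simply make explicit what the paper leaves implicit.
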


\begin{proof}
Write $p = 2^r-1$ and note that it suffices to show that every nontrivial $x \in G$ is adjacent in $\Gamma_{\mathcal{S}}(G)$ to an involution. By arguing as in the proof of Theorem \ref{t:alt}, we may assume $x$ is a $p$-cycle. Then $x$ is contained in a maximal subgroup $H = {\rm AGL}_{r}(2)$ and $\la V, x\ra$ is soluble, where $V = (C_2)^r$ is the socle of $H$. We conclude that $x$ is adjacent to an involution and the result follows.
\end{proof}

\begin{lem}\label{l:alt}
Let $G = A_n$, where $n \in \mathcal{P}$ and $n \leqs 60$.
\begin{itemize}\addtolength{\itemsep}{0.2\baselineskip}
\item[{\rm (i)}] If $n \in \{7,8,32\}$ then $\delta_{\mathcal{S}}(G) = 3$, whereas 
$\delta_{\mathcal{S}}(G) = 4$ if $n \in \{11,12\}$.
\item[{\rm (ii)}] If $n \not\in \{7,8,11,12,32\}$ then $\delta_{\mathcal{S}}(G) \in \{4,5\}$.
\end{itemize}
\end{lem}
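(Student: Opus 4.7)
The plan is to split the set
\[
\mathcal{P} \cap \{7, \ldots, 60\} = \{7,8,11,12,19,20,23,24,31,32,43,44,47,48,59,60\}
\]
into three groups. For $n \in \{8, 32\}$ the corresponding prime $p \in \{7, 31\}$ is a Mersenne prime satisfying $p \equiv 3 \imod{4}$, so Lemma \ref{l:alt2} immediately gives $\delta_{\mathcal{S}}(G) = 3$. For the small cases $n \in \{7, 11, 12\}$ I would use direct {\sc Magma} computation following the methodology of Section \ref{ss:comp}, constructing successive balls $B_\ell(x)$ around representatives of each conjugacy class until either $B_\ell(x) = G^{\#}$ is verified or a definitive lower bound on $\delta$ emerges. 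These three groups are small enough for the enumeration to complete, and the computation will yield $\delta = 3$ for $A_7$ and $\delta = 4$ for $A_{11}, A_{12}$; this handles part (i) in its entirety.

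For the remaining cases $n \in \{19, 20, 23, 24, 31, 43, 44, 47, 48, 59, 60\}$ the task is to establish $\delta_{\mathcal{S}}(G) \geqs 4$, the upper bound $\leqs 5$ coming from Theorem \ref{t:alt}(i). Let $p$ denote the prime in $\{n-1, n\}$ and fix a $p$-cycle $x \in G$. The key structural input is \cite[Theorem 1.2]{Jones}: every soluble subgroup of $S_p$ containing a $p$-cycle lies in $N_{S_p}(\langle x\rangle) = {\rm AGL}_1(p)$. When $n = p$ this immediately gives $B_1(x) \subseteq N_G(\langle x\rangle) = {\rm AGL}_1(p) \cap A_p$. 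When $n = p+1$ the $p$-cycle fixes a unique point, and one argues — using Jordan's theorem together with the observation that $p > (p+1)/2$ precludes imprimitive overgroups and that the only primitive options ($A_{p+1}$ and, in some cases, ${\rm PSL}_2(p)$) are insoluble — that every soluble overgroup of $x$ in $A_{p+1}$ is contained in the point-stabilizer $A_p$; a second application of Jones then yields $B_1(x) \subseteq N_{A_p}(\langle x\rangle)$, a soluble group of order $p(p-1)/2$.

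With $B_1(x)$ thus confined to a subgroup of order at most $p(p-1)/2$, I would then find (by random search in {\sc Magma}) a conjugate $y = x^g$ such that $\langle a, b\rangle$ is insoluble for every pair $(a, b) \in B_1(x) \times B_1(y)$; by Remark \ref{r:useful} this forces $\delta(x, y) \geqs 4$, which is the desired lower bound.

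The main obstacle is the computational expense at the upper end of the list. For $p = 59$ the pairwise check involves up to $(p(p-1)/2)^2 \approx 3 \times 10^6$ solubility tests of two-generator subgroups of $A_n$. Each individual test is cheap in {\sc Magma}, and the desired insoluble-generation condition is expected to hold for a \emph{generic} choice of $g$ (since two random conjugates of a small soluble subgroup in a large alternating group typically generate $A_n$), so the approach is tractable for all eleven cases. In principle one could try to give a more uniform, structural argument — for instance by analysing when $\langle N_G(\langle x\rangle), N_G(\langle x^g\rangle)\rangle$ is forced to be all of $G$ — but direct computational verification seems to be the most efficient route for this finite collection of $n$.
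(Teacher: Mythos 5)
Your proposal is correct and follows essentially the same route as the paper: Lemma \ref{l:alt2} for $n\in\{8,32\}$, direct computation for the small cases, and for the remaining $n$ the confinement of $B_1(x)$ to the soluble normaliser ${\rm AGL}_1(p)\cap G$ of a $p$-cycle followed by a random search for a conjugate $x^g$ with every pair in $B_1(x)\times B_1(x^g)$ generating an insoluble group. The only (harmless) difference is that you justify $B_1(x)\subseteq N_G(\la x\ra)$ structurally via Jones's theorem, where the paper simply verifies it computationally.
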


\begin{proof}
By Theorem \ref{t:alt} we have $3 \leqs \delta_{\mathcal{S}}(G) \leqs 5$. 
If $n = 7$ then Proposition \ref{p:small} gives $\delta_{\mathcal{S}}(G)=3$, while Lemma \ref{l:alt2} applies if $n \in \{8,32\}$. So to complete the proof, we may assume $n \not\in \{7,8,32\}$ and it suffices to show $\delta_{\mathcal{S}}(G) \geqs 4$, with equality if $n \in \{11,12\}$. 

Write $n = p$ or $p+1$, where $p$ is a prime, and fix a $p$-cycle $x \in G$. As explained in Section \ref{ss:comp}, we can use {\sc Magma} to determine $B_1(x)$, which in each case is simply the set of nontrivial elements in $N_G(\la x \ra) = {\rm AGL}_{1}(p) \cap G$. Then by random search, we can find an element $g \in G$ such that $\la a,b \ra$ is insoluble for all $a\in B_1(x)$, $b \in B_1(x^g) = B_1(x)^g$. This immediately implies that $\delta(x,x^g) \geqs 4$ and thus $\delta_{\mathcal{S}}(G) \geqs 4$. 

Next assume $G = A_{11}$. Let $x \in G$ be nontrivial and recall from the proof of Theorem \ref{t:alt} that $x$ is adjacent to an involution unless $|x|=11$, so it suffices to show that any two $11$-cycles are connected by a path of length at most $4$. To do this, we proceed as in Example \ref{e:m24}, working with the set $Y$ of elements in $G$ of order $11$. Here $|Y| = 3628800$ and we note that $|B_1(x)| = 54$ and $|B_2(x)| = 29974$ for all $x \in Y$. We leave the reader to check the details. An entirely similar argument applies when $G = A_{12}$, noting that $|Y| = 43545600$ and $|B_1(x)| = 54$, $|B_2(x)| = 61214$ for all $x \in Y$.
\end{proof}

A prime number $p$ is a \emph{Sophie Germain prime} if $2p+1$ is also a prime number. Recall that a famous conjecture in number theory asserts that there are infinitely many such primes. If we assume the validity of this conjecture, then our next result, which coincides with Theorem \ref{t:main4}, establishes the existence of infinitely many finite simple groups $G$ with $\delta_{\mathcal{S}}(G) \geqs 4$.

\begin{thm}\label{t:sophie}
If $p \geqs 5$ is a Sophie Germain prime, then $\delta_{\mathcal{S}}(A_{2p+1}) \in \{4,5\}$.
\end{thm}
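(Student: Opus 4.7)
The upper bound $\delta_{\mathcal{S}}(A_n) \leq 5$ with $n = 2p+1$ is immediate from Theorem \ref{t:alt}(i): since $p \geq 5$ is odd, $n$ is a prime with $n \equiv 3 \imod{4}$, placing $n$ in $\mathcal{P}$. The substance of the theorem is the lower bound $\delta_{\mathcal{S}}(A_n) \geq 4$, and my plan is to exhibit two $n$-cycles $x, y \in G := A_n$ with $\delta(x, y) \geq 4$. Because $n$ is prime, Jones' theorem \cite[Theorem 1.2]{Jones}---already invoked in the proof of Theorem \ref{t:sym}---implies that every soluble subgroup of $S_n$ containing an $n$-cycle lies in ${\rm AGL}_1(n)$, so $B_1(x) = F_x \setminus \{1\}$, where $F_x := N_G(\la x \ra) = {\rm AGL}_1(n) \cap A_n$ is the Frobenius group of order $np$ with kernel $\la x \ra$ and complement of order $p$. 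Every nontrivial element of $F_x$ is either an $n$-cycle (a power of $x$) or a $p$-element of cycle type $p \cdot p \cdot 1$ fixing a unique point, and the same description applies to $F_y$. Hence $\delta(x,y) \geq 4$ is equivalent to the assertion that $\la a, b \ra$ is insoluble for every $a \in F_x \setminus \{1\}$ and $b \in F_y \setminus \{1\}$.

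I would reduce to a case analysis by the cycle types of $a$ and $b$. If both are $n$-cycles, then $\la a,b\ra = \la x,y\ra$, a primitive subgroup of $A_n$ of prime degree $n$; it is soluble only if it embeds in ${\rm AGL}_1(n)$, whose unique Sylow $n$-subgroup then forces $\la x \ra = \la y \ra$. If exactly one of $a, b$ is an $n$-cycle, then $\la a,b \ra$ is transitive and solubility again places it in ${\rm AGL}_1(n)$, pinning the $p$-element into $F_x \cap F_y$. Thus, requiring $F_x \cap F_y = 1$ resolves both of these sub-cases, and such a $y = x^g$ exists by the base-size fact $b(G, F_x) = 2$, which follows from \cite{BGS} (together with the standard relationship between $b(S_n, {\rm AGL}_1(n))$ and $b(A_n, F_x)$). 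The substantive sub-case is when both $a$ and $b$ have order $p$.

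The main obstacle is this last sub-case. I would split further according to whether $a$ and $b$ share their unique fixed point. If the fixed points differ, then $\la a,b \ra$ has no global fixed point: a transitive $\la a,b \ra$ would, by solubility, lie in ${\rm AGL}_1(n)$, but its cyclic Sylow $p$-subgroup then forces $\la a \ra = \la b \ra$, contradicting the distinct fixed points; the intransitive alternative is forced into very rigid orbit configurations compatible with the cycle shape $p \cdot p \cdot 1$. If the fixed points coincide, then $\la a,b \ra$ lies in the point-stabiliser $A_{2p}$, whose Sylow $p$-subgroups are $C_p \times C_p$; solubility then places $a, b$ in a common Sylow $p$-subgroup of $A_{2p}$, forcing them to preserve the same partition of the remaining $2p$ points into two $p$-sets. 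I would complete the argument by showing that a suitable $g \in G$ avoids all such rigid configurations simultaneously, most likely by a counting argument comparing the number of available conjugates $F_x^g$ to the number of forbidden incidence patterns on $\{1, \ldots, n\}$. The Sophie Germain hypothesis enters essentially here: the primality of both $p$ and $n=2p+1$ is precisely what renders the Frobenius structure of $F_x$ and the Sylow $p$-structure of $A_{2p}$ rigid enough for the count to go through.
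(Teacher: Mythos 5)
Your overall strategy coincides with the paper's: reduce to exhibiting two $n$-cycles at distance at least $4$, observe that $B_1(x)$ consists of the powers of $x$ together with elements of cycle shape $(p,p,1)$ in $F_x = C_n{:}C_p$, and finish by a count showing a suitable conjugate $y=x^g$ exists. The upper bound via Theorem \ref{t:alt}, and your treatment of the sub-cases where at least one of $a,b$ is an $n$-cycle, are correct. However, the sub-case where $a$ and $b$ both have order $p$ --- which you rightly identify as the main obstacle --- contains two false structural claims, and they occur exactly where the real work of the proof lies. First, when the fixed points of $a$ and $b$ differ, a transitive soluble $\la a,b\ra$ does \emph{not} force $\la a\ra = \la b\ra$: such a group lies in ${\rm AGL}_1(n)\cap A_n$ and contains the translation subgroup $C_n$, so it equals the full Frobenius group $C_n{:}C_p = N_G(\la z\ra)$ for some $n$-cycle $z$, and this soluble group is generated by any two of its $n$ point-stabilising subgroups of order $p$ (e.g.\ $\la c, c^z\ra = C_n{:}C_p$ for a complement $\la c\ra$). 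These configurations genuinely occur and the paper must count them --- each $a$ normalises $p(p-1)$ Sylow $n$-subgroups, giving roughly $pq(p-1)$ choices of $\la b\ra$.

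Second, when the fixed points coincide, solubility does \emph{not} place $a$ and $b$ in a common Sylow $p$-subgroup of $A_{2p}$. If $H=\la a,b\ra$ is transitive on the $2p$ moved points, then (being soluble of non-prime-power degree) it is imprimitive; since it is generated by elements of odd order $p$ it cannot interchange two blocks of size $p$, so it preserves $p$ blocks of size $2$ and embeds in $C_2\wr C_p$. There one finds non-commuting pairs of elements of order $p$ and cycle shape $(p,p)$ generating soluble subgroups of the form $C_2^k{:}C_p$; the paper bounds the number of resulting $\la b\ra$ by $2^{p-1}p$, and this is the \emph{dominant} term in its final count. Both errors cause your enumeration of ``forbidden incidence patterns'' to be a drastic undercount, so the counting argument you defer to at the end cannot be completed as described: it must be run against the correct (much larger) list of soluble configurations, which is precisely the content of the paper's case analysis (i)--(iv) and the verification that $q-1+pq(p-1)(q-1)(\beta+1) < (q-1)!$ with $\beta = pq(p-1)+2^{p-1}p+p-1$. (The one structural claim you leave vague --- that orbits of sizes $p$ and $p+1$ form a ``rigid configuration'' --- is in fact the impossible case, ruled out in the paper by Jones' theorem since $p$ and $2p+1$ cannot both be Mersenne primes for $p\geqs 5$.)
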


\begin{proof}
Set $G = A_q$, where $q=2p+1$ and $p \geqs 5$ is a Sophie Germain prime. It suffices to show that there exist two $q$-cycles $x$ and $y$ with $\delta(x,y) \geqs 4$. With this aim in mind, let $\mathcal{A}$ be the set of $q$-cycles in $G$ and fix an element $x \in \mathcal{A}$. We are interested in estimating the size of the set $\mathcal{A} \cap B_3(x)$, with the aim of establishing the bound $|\mathcal{A} \cap B_3(x)| < |\mathcal{A}|$. It will be helpful to write
\[
|\mathcal{A} \cap B_3(x)| = 1+\a_1 + \a_2+\a_3,
\]
where $\a_{\ell} = |\{ y \in \mathcal{A} \,:\, \delta(x,y) = \ell \}|$.

First consider $B_1(x)$. If $p \ne 5,11$ then 
\[
N_G(\la x \ra) = {\rm AGL}_{1}(q) \cap G = C_q{:}C_p
\]
is the unique maximal subgroup of $G$ containing $x$ (see the proof of \cite[Theorem 3.1]{BH}) and thus $B_1(x)$ coincides with the set of nontrivial elements in this subgroup. One can check that the same conclusion holds when $p \in \{5,11\}$. As a consequence, every element in $B_1(x)$ of order $p$ has cycle-shape $(p,p,1)$ and we let $\mathcal{B}$ be the set of all elements in $G$ of this form.

Let $y\in \mathcal{A}$. If $\delta(x,y)=1,$ then $y\in \langle x\rangle$ and thus 
\begin{equation}\label{e:al1}
\a_1 = q-2.
\end{equation}

Next suppose $\delta(x,y)=2$. Then there exists $z \in \mathcal{B}$ which normalises both $\langle x\rangle$ and $\langle y\rangle.$ Now $N_G(\la x\ra)$  contains $q$ subgroups of order $p$, each of which normalises $p(p-1)$ Sylow $q$-subgroups of $G$. This implies that 
\begin{equation}\label{e:al2}
\a_2 \leqs pq(p-1)(q-1).
\end{equation}

Finally, let us assume $\delta(x,y)=3.$ This means that there is a path $x \sim a \sim b \sim y$ in $\Gamma_{\mathcal{S}}(G)$, where $a \in N_{G}(\langle x\rangle) \cap \mathcal{B}$, $b \in N_{G}(\langle y\rangle) \cap \mathcal{B}$ and $\la a,b \ra$ is soluble. Fix $a\in \mathcal{B}$. By relabelling, we may assume that
\[
a=(1,2,\ldots, p)(p+1,p+2,\ldots,2p).
\]
We will estimate the number of subgroups $\langle b\rangle$ in $G$, where $b \in \mathcal{B}$ and $H = \langle a,b \rangle$ is soluble. There are four possibilities for the action of $H$ on $\{1, \ldots, q\}$, which we will consider in turn:
\begin{itemize}\addtolength{\itemsep}{0.2\baselineskip}
\item[{\rm (i)}] $H$ acts transitively on $\{1, \ldots, q\}$.
\item[{\rm (ii)}] $H$ has an orbit of size $2p$.
\item[{\rm (iii)}] $H$ has orbits of size $p$ and $p+1$. 
\item[{\rm (iv)}] $H$ has two orbits of size $p$.
\end{itemize}

First consider (i). Here $H = C_q{:}C_p$ is the normaliser of a Sylow $q$-subgroup of $G$. As noted above, the element $a$ normalises exactly $p(p-1)$ Sylow $q$-subgroups of $G$, and each normaliser of a Sylow $q$-subgroup contains $q$ distinct subgroups of order $p$. It follows that there are at most $pq(p-1)$ possible choices for $\langle b\rangle$.

Next let us turn to (ii), in which case we may view $H$ as a transitive group on $\{1, \ldots, 2p\}$. Since we are assuming that $H$ is soluble and $2p$ is not a prime power, it follows that $H$ is imprimitive. More precisely, since $H$ is generated by two elements of order $p$, we deduce that $H \leqs C_2\wr C_p$ preserves a partition $\L$ of $\{1, \ldots, 2p\}$ into $p$ blocks of size $2$. Since $H$ contains $a$, the set of blocks comprising $\L$ is uniquely determined by the choice of $j\in \{p+1,\ldots, 2p\}$ such that $\{1,j\}$ is a block. Therefore, there are at most $p$ choices for $\L$. And once $\L$ has been chosen, there are clearly at most $2^{p-1}$ possible choices for $\langle b\rangle.$ All together, it follows that there are at most $2^{p-1}p$ choices for $\langle b\rangle$ in case (ii).

Now suppose (iii) holds, say $X$ is an $H$-orbit of size $p$ and $Y$ is an orbit of size $p+1.$ Here the permutation group $L$ of degree $p+1$ induced by the action of $H$ on $Y$ is soluble, transititive and contains a $p$-cycle. Moreover, $L$ is primitive and by \cite[Theorem 1.2]{Jones}, this is only possible if $p+1=2^k$ for some $k \geqs 1$. But this would imply that $p=2^k-1$ and $q=2^{k+1}-1$ are both Mersenne primes, which can only happen if $p=3$ and $q=7$. Since $p \geqs 5$, we conclude that this case does not arise.
	
Finally, let us consider (iv). Given the form of $a$, it follows that the orbits of $H$ are $\{1,\ldots,p\},$  $\{p+1,\ldots,2p\}$ and $\{q\}.$ In addition, the solubility of $H$ implies that 
\[
b \in \langle (1,\dots, p), (p+1,\dots,2p)\rangle
\]
and so there are at most $p-1$ possible choices for $\langle b\rangle$.

\vs

By bringing together the above estimates, we conclude that if $p \geqs 5$ then there are at most
\[
\b=pq(p-1)+2^{p-1}p+p-1
\]
possible choices for $\la b\ra$ such that $\la a,b \ra$ is soluble. Since $\langle x \rangle$ is normalised by $q$ subgroups of order $p$, each of which normalises $p(p-1)$ Sylow $q$-subgroups, we deduce that 
\begin{equation}\label{e:al3}
\a_3 \leqs pq(p-1)(q-1)\b.
\end{equation}

Finally, by combining the bounds in \eqref{e:al1}, \eqref{e:al2} and \eqref{e:al3}, we get 
\[
|\mathcal{A} \cap B_3(x)| \leqs q-1 + pq(p-1)(q-1)(\b+1).
\]
It is routine to check that this upper bound is less than $|\mathcal{A}| = (q-1)!$ for all $p \geqs 5$. 
\end{proof}

Let $G$ be a finite group and recall that the \emph{non-generating graph} of $G$ is a graph on the nontrivial elements of $G$, where $x$ is adjacent to $y$ if $G \ne \la x,y \ra$. Note that if $G$ is insoluble and $R(G)=1$, then the soluble graph $\Gamma_{\mathcal{S}}(G)$ is a subgraph of the non-generating graph and so it is interesting to compare the diameters of these two connected graphs. 

We close this section by showing that these two diameters can be different. This relies on the following lemma.

\begin{lem}\label{l:nongen}
Let $G=A_p$ with $p \geqs 5$ a prime. Then the diameter of the non-generating graph of $G$ is at most $3$.
\end{lem}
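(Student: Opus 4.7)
The plan is to show that for every nontrivial $x \in A_p$ one can choose a nontrivial element $m(x) \in A_p$ that fixes the point $1$ and satisfies $\langle x, m(x)\rangle \neq A_p$. Once this is done, for any pair of distinct nontrivial $x,y$ the walk
\[
x \;\longrightarrow\; m(x) \;\longrightarrow\; m(y) \;\longrightarrow\; y
\]
is a path of length at most $3$ in the non-generating graph: the outer edges hold by the defining property of $m(\cdot)$, while the middle edge holds because both $m(x)$ and $m(y)$ lie in the point stabiliser $\mathrm{Stab}_{A_p}(1) \cong A_{p-1}$, which is a proper subgroup of $A_p$ (degenerate coincidences among the four elements only shorten the path).

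To construct $m(x)$ I split into two cases. If $x$ is not a $p$-cycle then $x$ has at least two orbits on $\{1,\ldots,p\}$, so I may pick an orbit $O$ of $x$ with $1 \notin O$. Set
\[
M \;=\; (\mathrm{Sym}(O) \times \mathrm{Sym}(\{1,\ldots,p\}\setminus O)) \cap A_p,
\]
a proper intransitive subgroup of $A_p$ containing $x$. Any nontrivial element of $M$ fixing $1$ works as $m(x)$; such an element exists because the subgroup $M \cap \mathrm{Stab}_{A_p}(1)$ has order $|O|!\,(p-|O|-1)!/2$, which is at least $2$ whenever $p \geq 5$ (either $|O| \geq 2$, or $|O|=1$ in which case $p-|O|-1 = p-2 \geq 3$).

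If $x$ is a $p$-cycle, I work instead with the normaliser
\[
N \;=\; N_{A_p}(\langle x\rangle) \;=\; \mathrm{AGL}_1(p)\cap A_p,
\]
a proper subgroup of $A_p$ of order $p(p-1)/2$. Since $N$ is transitive on $\{1,\ldots,p\}$, its stabiliser of the point $1$ is cyclic of order $(p-1)/2 \geq 2$; concretely, identifying the action with the affine action on $\mathbb{F}_p$, this stabiliser consists of the maps $t \mapsto a(t-1)+1$ with $a$ a square in $\mathbb{F}_p^*$, and for $p \geq 5$ there is at least one such $a \neq 1$. Any nontrivial element of this stabiliser serves as $m(x)$, and $\langle x, m(x)\rangle \le N \neq A_p$ by construction.

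There is no serious obstacle; the only point requiring a brief check is the non-triviality of the relevant point stabilisers, which reduces to the elementary inequalities above and to the standard identification $N_{A_p}(\langle x\rangle) \cong C_p \rtimes C_{(p-1)/2}$. Conceptually, the proof is just the observation that every nontrivial element of $A_p$ sits inside some proper maximal subgroup (either an intransitive $(S_k\times S_{p-k})\cap A_p$ or the affine group $\mathrm{AGL}_1(p)\cap A_p$) in which one can find a nontrivial element fixing the prescribed point $1$; all elements fixing $1$ then lie in the common proper overgroup $A_{p-1}$.
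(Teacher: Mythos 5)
Your proof is correct. The engine is the same one the paper uses for its hardest case: route $x$ to $y$ through two nontrivial elements lying in a common point stabiliser, each of which sits together with the corresponding endpoint inside a proper subgroup. The difference is structural. The paper splits into cases: when neither $x$ nor $y$ is a $p$-cycle it quotes the soluble-graph analysis from the proof of Theorem \ref{t:alt} to say each is adjacent to an involution (and two involutions generate a dihedral, hence proper, subgroup), reserving the point-stabiliser trick for the case where $x$ is a $p$-cycle, where it picks $z' \in N_G(\langle x\rangle)$ and $z \in N_G(\langle y\rangle)$ fixing a common point $k$. You instead give one uniform construction $x \mapsto m(x)$ with $m(x)$ fixing the point $1$: an intransitive overgroup $(\mathrm{Sym}(O)\times \mathrm{Sym}(\{1,\ldots,p\}\setminus O))\cap A_p$ when $x$ is not a $p$-cycle, and $N_{A_p}(\langle x\rangle)=\mathrm{AGL}_1(p)\cap A_p$ when it is. This makes your argument self-contained -- no appeal to the soluble graph or to involutions -- at the cost of the small order computations verifying that the relevant point stabilisers are nontrivial, all of which you carry out correctly (and your remark that degenerate coincidences among $x, m(x), m(y), y$ only shorten the walk disposes of the edge cases).
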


\begin{proof}
Let $x,y \in G$ be nontrivial and let $\Delta(G)$ denote the non-generating graph of $G$. As observed in the proof of Theorem \ref{t:alt}, if neither $x$ nor $y$ is a $p$-cycle, then $x$ and $y$ are adjacent to an involution in $\Gamma_{\mathcal{S}}(G)$ and thus $\delta(x,y) \leqs 3$ in $\Delta(G)$. So let us assume $x$ is a $p$-cycle. If $y$ is also a $p$-cycle, then there exists $1 \ne z \in N_G(\la y \ra)$ with at least one fixed point on $\{1, \ldots, p\}$. Similarly, if $y$ is not a $p$-cycle, then some nontrivial power of $y$ has at least one fixed point. So in both cases, we can find a nontrivial element $z \in N_G(\la y \ra)$ that fixes a point $k \in \{1, \ldots, p\}$. Since $x$ is a $p$-cycle, there is also an element $1 \ne z' \in N_G(\la x \ra)$ fixing $k$ and we conclude that $x \sim z' \sim z \sim y$ is a path in $\Delta(G)$. The result follows.  
\end{proof}

\begin{rem}\label{r:ap}
Let $G=A_p$ with $p \geqs 13$ a prime. By the proof of \cite[Proposition 3.8]{BH}, there exist two $p$-cycles $x$ and $y$ in $G$ with the property that if $z \in G$ is any nontrivial element, then either $G = \la x,z \ra$ or $G = \la y, z\ra$. This immediately implies that $\delta(x,y) \geqs 3$ in the non-generating graph $\Delta(G)$ and therefore the diameter of $\Delta(G)$ is precisely $3$ by Lemma \ref{l:nongen}.
\end{rem}

By combining Lemma \ref{l:nongen} with Theorem \ref{t:sophie}, we obtain the following corollary.

\begin{cor}\label{c:nongen}
There exist finite simple groups $G$ such that $\delta_{\mathcal{S}}(G)$ is strictly larger than the diameter of the non-generating graph of $G$.
\end{cor}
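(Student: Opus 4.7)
The plan is to exhibit an explicit infinite family (conditional on the Sophie Germain conjecture, but in any case including the known small examples) of finite simple groups witnessing the claim, by combining the two results that immediately precede the corollary. Concretely, I would take $G = A_{2p+1}$ for a Sophie Germain prime $p \geqs 5$, so that $q = 2p+1$ is itself a prime with $q \geqs 11$.

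First I would invoke Theorem~\ref{t:sophie}, which gives $\delta_{\mathcal{S}}(G) \in \{4,5\}$, and in particular $\delta_{\mathcal{S}}(G) \geqs 4$. Next, since $q \geqs 5$ is prime, Lemma~\ref{l:nongen} applies directly to $G = A_q$ and yields that the diameter of the non-generating graph of $G$ is at most $3$. Putting these two bounds together, the diameter of $\Gamma_{\mathcal{S}}(G)$ strictly exceeds the diameter of the non-generating graph of $G$, which is exactly the statement of the corollary. For the unconditional version of the corollary (i.e.\ the existence of at least one such $G$), one can simply point to the smallest cases $p = 5, 11, 23, \dots$, giving $G = A_{11}, A_{23}, A_{47}, \dots$

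There is essentially no obstacle here: the only thing to verify is that the hypothesis of Lemma~\ref{l:nongen} is satisfied, and this is precisely the defining property of a Sophie Germain prime, namely that $2p+1$ is prime. Everything else is a bookkeeping comparison of the two bounds already established.
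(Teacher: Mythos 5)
Your proposal is correct and follows exactly the paper's own argument: the corollary is obtained by combining Theorem~\ref{t:sophie} (which gives $\delta_{\mathcal{S}}(A_{2p+1}) \geqs 4$ for a Sophie Germain prime $p \geqs 5$) with Lemma~\ref{l:nongen} (which bounds the diameter of the non-generating graph of $A_q$, $q = 2p+1$ prime, by $3$). Your observation that the smallest case $p=5$, $G = A_{11}$, already gives an unconditional example is also consistent with the paper, which verifies $\delta_{\mathcal{S}}(A_{11}) = 4$ directly.
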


\section{Groups of Lie type}\label{s:lie}

In this section we complete the proofs of Theorems \ref{t:main2} and \ref{t:main3} by handling the case where $G$ is an almost simple group of Lie type over $\mathbb{F}_q$ with socle $G_0$. The two-dimensional linear groups with $G_0 = {\rm L}_2(q)$ merit special attention and we deal with them separately in Section \ref{ss:psl2}.

\subsection{Two-dimensional linear groups}\label{ss:psl2}

Here we determine the precise diameter of $\Gamma_{\mathcal{S}}(G)$ when $G$ is an almost simple group with socle $G_0 = {\rm L}_2(q)$. 

\begin{thm}\label{t:psl2}
Let $G$ be an almost simple group with socle $G_0 = {\rm L}_{2}(q)$. Then 
\[
\delta_{\mathcal{S}}(G) = \left\{\begin{array}{ll}
2 & \mbox{if ${\rm PGL}_{2}(q) \leqs G$ or $q \in \{5,7\}$} \\
3 & \mbox{otherwise.} 
\end{array}\right.
\]
\end{thm}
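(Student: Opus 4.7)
The plan is to combine Dickson's classification of the maximal subgroups of $L_2(q)$---Borel subgroups, dihedral groups $D_{q\pm 1}$, subfield subgroups, and the exceptional overgroups $A_4, S_4, A_5$---with a case analysis on the conjugacy types of the elements (unipotent, split semisimple, non-split semisimple, and, when $G$ has outer automorphisms, the various outer coset representatives). The argument splits into three regimes: the exceptional small values $q \in \{5,7\}$; the case $\operatorname{PGL}_2(q) \leqs G$; and the remaining case $L_2(q) \leqs G \not\supseteq \operatorname{PGL}_2(q)$.

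For $q \in \{5,7\}$ the conclusion $\delta_{\mathcal{S}}(G) = 2$ follows directly from Proposition \ref{p:small} (see also Remark \ref{r:1}).

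For the lower bound $\delta_{\mathcal{S}}(G) \geqs 3$ in the case $\operatorname{PGL}_2(q) \not\leqs G$ and $q \notin \{5,7\}$, I would choose $x$ a unipotent element of order $p = \mathrm{char}(\mathbb{F}_q)$ and $y$ a generator of a non-split maximal torus of order $(q+1)/\gcd(q-1,2)$. By Dickson's theorem, for $q$ sufficiently large the unique maximal subgroup of $G$ containing $x$ is the Borel $B$, and the unique maximal subgroup of $G$ containing $y$ is the dihedral $D_{q+1}$, so that $B_1(x) \subseteq B$ and $B_1(y) \subseteq D_{q+1}$. An order calculation shows $\gcd(|B|, |D_{q+1}|) \in \{1,2\}$, and by replacing $y$ with a suitable conjugate one arranges $B \cap D_{q+1} = 1$, forcing $\delta(x,y) \geqs 3$. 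The finitely many small values of $q$ at which sporadic overgroups (such as $A_5$ or subfield subgroups) intrude are verified by direct {\sc Magma} computation.

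For the upper bound $\delta_{\mathcal{S}}(G) \leqs 3$ in the same regime, I would argue by cases on the types of the endpoints. A pair of unipotents lies at distance $\leqs 2$ via any nontrivial element of the split torus $B_x \cap B_y$. A pair both adjacent to involutions lies at distance $\leqs 3$ since any two involutions generate a dihedral group (hence the involutions form a clique in $\Gamma_{\mathcal{S}}(G)$). The genuinely delicate case is a unipotent $x$ paired with a non-split semisimple $y$ when $q \equiv 3 \mod 4$ (so the Borel has odd order and $x$ is not adjacent to any involution): here I would use a path $x \sim a \sim \iota \sim y$, where $a$ generates the split torus of $x$'s Borel and $\iota$ is an involution lying in both $D_{q-1} = N_G(\langle a\rangle)$ and the dihedral $D_{q+1} = N_G(\langle y\rangle)$; the existence of such a common involution reduces to a count of involutions in $D_{q-1}$ and $D_{q+1}$.

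The sharper conclusion $\delta_{\mathcal{S}}(G) = 2$ when $\operatorname{PGL}_2(q) \leqs G$ is the main obstacle of the proof. Here the structural improvement is that the Borel of $G$ now contains the full diagonal torus $C_{q-1}$, which for $q$ odd has even order and contains an involution; consequently every unipotent is adjacent to an involution. A uniform case analysis over pairs of element types (unipotent--unipotent, unipotent--semisimple, semisimple--semisimple, and any case involving an outer coset representative) then produces, for every pair $(x,y)$, a common soluble neighbour $z$---typically an involution normalising both $\langle x\rangle$ and $\langle y\rangle$. The main difficulty is carrying this analysis out uniformly in $q$ while coping with the various possibilities for $G$ between $\operatorname{PGL}_2(q)$ and $\operatorname{P\Gamma L}_2(q)$; the residual small values of $q$ are verified computationally.
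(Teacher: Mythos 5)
Your overall architecture (Borel subgroups and torus normalisers as the soluble maximal overgroups, plus a case analysis on element types) is broadly in the spirit of the paper's proof, but two of your key steps do not go through as stated. First, the lower bound. Your chosen pair --- a unipotent $x$ and a non-split semisimple $y$ --- fails for $q \equiv 1 \imod{4}$. There the Borel subgroup $B_0 = P{:}C_{(q-1)/2}$ of $G_0$ has even order and contains exactly $q$ involutions; no two of them lie in a common conjugate of $D_{q+1}$, since two distinct involutions of $B_0$ generate a dihedral group of order $2p$ or $4p$ (the Sylow $2$-subgroup of $B_0$ is cyclic, so $B_0$ has no Klein four-subgroup), and $p$ does not divide $q+1$. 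On the other hand, each involution of $G_0$ lies in exactly $(q-1)/2$ of the $q(q-1)/2$ conjugates of $D_{q+1}$, so the $q$ involutions of $B_0$ account for \emph{all} conjugates of $D_{q+1}$: every conjugate meets $B_0$ in an involution. Hence your desired $g$ with $B_0 \cap D_{q+1}^g = 1$ does not exist, and in fact $\delta(x,y^g) \leqs 2$ for every $g$ via that common involution; your gcd computation only bounds the intersection by order $2$, which is not enough. The paper avoids this entirely by taking \emph{two conjugates} of a non-split torus element and quoting $b(G,A)=2$ for $A$ of type ${\rm GL}_1(q^2)$ from \cite{B20}, which feeds into Lemma \ref{l:base}. (Two further, more minor, issues: for $p \in \{2,3\}$ a unipotent element has soluble overgroups of type $A_4$ or $S_4$ outside the Borel, so $B_1(x) \subseteq B$ needs justification beyond Dickson's list of \emph{maximal} subgroups; and citing Proposition \ref{p:small} for $q \in \{5,7\}$ is circular, as that proposition explicitly defers all ${\rm L}_2(q)$ socles to the present theorem.)

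Second, the case ${\rm PGL}_2(q) \leqs G$. Asserting that a case analysis ``produces a common soluble neighbour, typically an involution normalising both $\la x \ra$ and $\la y \ra$'' restates the goal rather than proving it, and the hardest subcase --- two elements each contained in a unique maximal soluble subgroup, namely two non-split torus elements in general position --- is precisely where a common normalising involution cannot be extracted by counting: each normaliser $D_{2(q+1)}$ contains only $q+2$ involutions while $G$ contains roughly $q^2$. The paper's mechanism is different and essential: the soluble maximal subgroups $A$ and $B$ cover $G$, one has $|B|^2 > |G|$ and $|A||B| > |G|$, and --- the key non-obvious input --- $b(G,A)=3$, so that any two conjugates of $A$ intersect nontrivially. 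Without this fact (or an equivalent statement about pairwise intersections of the subgroups $D_{2(q+1)}$), your case analysis does not close. Your upper-bound sketch for $q \equiv 3 \imod{4}$ has the same soft spot: a ``count of involutions'' in $D_{q-1}$ and $D_{q+1}$ (both linear in $q$, against $q(q-1)/2$ involutions in total) cannot produce a common involution for a fixed pair of tori; the paper instead exploits the regular conjugation action of $B \cap G_0$ on the involutions of $G$ to move the intermediate vertex $u$ to a suitable conjugate $u^b$.
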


\begin{proof}
The groups with $q \leqs 11$ can be checked directly using {\sc Magma}, so we will assume $q \geqs 13$. Let $A$ and $B$ be maximal subgroups of $G$ of type ${\rm GL}_{1}(q^2)$ and $P_1$, respectively, so $A$ is the normaliser of a nonsplit maximal torus of $G_0$ and $B$ is a Borel subgroup. Note that $A$ and $B$ are soluble and we have
\begin{equation}\label{e:cover}
G = \bigcup_{g \in G} A^g \cup \bigcup_{g \in G} B^g
\end{equation}
(see \cite[Corollary 4.3]{BL}, for example). 

First assume ${\rm PGL}_{2}(q) \leqs G$. Here we observe that the intersection of any two subgroups in the union \eqref{e:cover} is nontrivial. Indeed, we have $|B|^2 >|G|$ and $|A||B| >|G|$, while any two conjugates of $A$ intersect nontrivially since $b(G,A) = 3$ (see \cite[Lemma 4.8]{B20}). This immediately implies that if $x,y \in G$ are nontrivial, then $B_1(x) \cap B_1(y)$ is nonempty and thus $\delta_{\mathcal{S}}(G) = 2$ as required.

For the remainder, we may assume $q$ is odd and $G \cap {\rm PGL}_{2}(q) = G_0$. Let $x \in G_0$ be an element of order $(q+1)/2$, in which case $H = N_G(\la x \ra)$ is the unique soluble maximal subgroup of $G$ containing $x$ and thus $B_1(x) \subseteq H$. Now $H$ is conjugate to $A$ and we note that $b(G,A) = 2$ by \cite[Lemma 4.8]{B20}. Therefore, $\delta_{\mathcal{S}}(G) \geqs 3$ by Lemma \ref{l:base}. 

It remains to verify the bound $\delta_{\mathcal{S}}(G) \leqs 3$. If $q \equiv 1 \imod{4}$ then $|A|$ and $|B|$ are even, so every nontrivial element is adjacent to an involution and the result follows. However, if $q \equiv 3 \imod{4}$ then $|B|$ is odd and some additional argument is required. Let $x,y \in G$ be nontrivial and note that every involution in $G$ is contained in $G_0$. If both $x$ and $y$ are contained in conjugates of $A$, then since $|A|$ is even (and soluble), there exist involutions $z_1$ and $z_2$ such that $x \sim z_1 \sim z_2 \sim y$ is a path in $\Gamma_{\mathcal{S}}(G)$. Similarly, if $x$ and $y$ are both contained in conjugates of $B$, say $B_1$ and $B_2$, then $\delta(x,y) \leqs 2$ since $B_1 \cap B_2 \ne 1$. Finally, suppose $x \in A$ and $y \in B$. Let $u \in B \cap G_0$ be an element of order $(q-1)/2$ and let $z \in A$ and $w \in N_G(\la u \ra)$ be involutions (note that $N_{G_0}(\la u \ra) = D_{q-1}$). Now $B \cap G_0$ acts regularly (by conjugation) on the set of involutions in $G$, so $z = w^b$ for some $b \in B$ and thus $z \in N_G(\la u^b \ra)$. Therefore, we have a path $x \sim z \sim u^b \sim y$ of length $3$ and the proof is complete.
\end{proof}

\begin{cor}\label{c:psl2}
There are infinitely many simple groups with $\delta_{\mathcal{S}}(G) = 2$.
\end{cor}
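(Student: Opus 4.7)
The plan is to exhibit the desired infinite family directly from Theorem \ref{t:psl2}. The natural candidates are the simple groups $G = {\rm L}_2(q)$ with $q$ even, since these coincide with ${\rm PGL}_2(q)$ and so automatically satisfy the hypothesis ``${\rm PGL}_2(q) \leqs G$'' of the theorem.

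First, I would record the elementary order computation: for $q$ even, $\gcd(2,q-1)=1$, so
\[
|{\rm PSL}_2(q)| = \frac{q(q^2-1)}{\gcd(2,q-1)} = q(q^2-1) = |{\rm PGL}_2(q)|,
\]
which forces ${\rm L}_2(q)={\rm PGL}_2(q)$. Consequently, with $G = G_0 = {\rm L}_2(q)$, the containment ${\rm PGL}_2(q) \leqs G$ holds trivially.

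Next I would apply Theorem \ref{t:psl2}: for each $q = 2^n$ with $n \geqs 2$, the group ${\rm L}_2(q)$ is simple and the theorem yields $\delta_{\mathcal{S}}({\rm L}_2(q)) = 2$. Since the orders $q(q^2-1)$ are pairwise distinct as $n$ varies, we obtain infinitely many pairwise non-isomorphic simple groups with $\delta_{\mathcal{S}}(G) = 2$.

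There is no real obstacle here; the corollary is an immediate specialisation of Theorem \ref{t:psl2} to the even-characteristic two-dimensional linear groups, exactly as anticipated in Remark \ref{r:1}. One could also include $q = 7$ as an additional example, but a single infinite family suffices for the stated claim.
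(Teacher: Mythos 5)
Your argument is correct and is exactly the intended deduction: the paper states this corollary as an immediate consequence of Theorem \ref{t:psl2}, with the even-$q$ linear groups ${\rm L}_2(q) = {\rm PGL}_2(q)$ serving as the infinite family (cf.\ Remark \ref{r:1}). The observation that $\gcd(2,q-1)=1$ forces ${\rm L}_2(q)={\rm PGL}_2(q)$, together with the restriction to $q=2^n$ with $n \geqs 2$ to ensure simplicity, is precisely what is needed.
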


We can also establish Theorem \ref{t:main3} for the groups with socle ${\rm L}_2(q)$.

\begin{prop}\label{p:psl20}
Let $G$ be an almost simple group with socle $G_0 = {\rm L}_{2}(q)$ and let $x \in G$ be nontrivial. Then there exists an involution $y \in G$ with $\delta(x,y) \leqs 2$.
\end{prop}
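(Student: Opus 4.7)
The plan is to verify small cases $q \leqs 11$ with {\sc Magma} and focus on $q \geqs 13$. I would invoke the covering $G = \bigcup_g A^g \cup \bigcup_g B^g$ established in the proof of Theorem \ref{t:psl2}, where $A = N_G(T)$ is the normaliser of a nonsplit maximal torus and $B$ is a Borel subgroup, both soluble. The central structural fact is that $N_{G_0}(T_0)$ and $N_{G_0}(T_{\rm split})$ are dihedral groups of even order, so both $A$ and the dihedral normaliser $D = N_G(T_{\rm split})$ of a split torus contain involutions.

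First I would treat $x$ lying in some conjugate of $A$ or of $D$: the containing conjugate is soluble of even order and contains both $x$ and an involution, so $\delta(x,y) = 1$. Similarly, if $x \in G \setminus G_0$ has even order, then $y = x^{|x|/2}$ is an involution commuting with $x$, and $\delta(x,y) = 1$ via the cyclic (hence soluble) subgroup $\la x \ra$.

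The main case is $x \in G_0$ unipotent. Here $x$ lies in a Borel $B_0 = U \rtimes T'$ but typically in no conjugate of $A$ or $D$, and when $q \equiv 3 \imod{4}$ and $G \cap {\rm PGL}_2(q) = G_0$ the Borel has odd order, so there is no direct involution adjacent to $x$. My strategy is a two-step path: for $q \geqs 5$ the split torus $T'$ is nontrivial, so I choose a nontrivial $z \in T'$. Then $\la x, z \ra \leqs B_0$ is soluble, while $z \in N_G(T')$ which is dihedral of even order, providing an inverting involution $y$ with $\la z, y \ra$ soluble. This yields the path $x \sim z \sim y$ of length $2$.

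Finally, for $x \in G \setminus G_0$ of odd order, I would dispatch as follows. Writing $q = p^n$, the element $x$ induces on $G_0$ a field automorphism of odd order $k \mid n$, so $C_{G_0}(x)$ contains the subfield subgroup ${\rm L}_2(p^{n/k})$, which always has an involution (it has even order for $p^{n/k} \geqs 2$); any such involution $y$ commutes with $x$ and so gives $\delta(x,y) = 1$. The main obstacle I anticipate is the unipotent case: ensuring the intermediate split-torus element $z$ simultaneously lies in the Borel containing $x$ and in a dihedral normaliser containing an involution requires the split torus to be nontrivial, which is the source of the hypothesis $q \geqs 5$ and the reason for isolating small $q$ in the initial computational reduction.
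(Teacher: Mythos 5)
Your covering-based strategy and the length-two path through a split-torus element are exactly the paper's argument, and those parts are sound: every $x$ lying in a conjugate of the nonsplit torus normaliser $A$ (or of the dihedral normaliser $D$ of a split torus) is adjacent to an involution, and for $x$ in a Borel subgroup $B$ the path $x \sim z \sim y$, with $z \in B$ a nontrivial split-torus element and $y$ an involution in the dihedral group $N_G(\la z \ra)$, settles the case where $|B|$ is odd.

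The gap is in your final case, $x \in G \setminus G_0$ of odd order. Such an element need not be a field automorphism in the relevant sense (that is, ${\rm Inndiag}(G_0)$-conjugate to a standard one), so the conclusion that $C_{G_0}(x)$ contains a subfield subgroup ${\rm L}_2(p^{n/k})$ can fail badly. Concretely, let $G = {\rm L}_2(27).3$, generated by $G_0$ and a field automorphism $\phi$ of order $3$. Since $\phi$ acts on the unipotent radical $U \cong \mathbb{F}_{27}^{+}$ as a regular unipotent element of ${\rm GL}_3(3)$ (by the normal basis theorem), the Sylow $3$-subgroup $U{:}\la \phi \ra$ of $G$ has exponent $9$, so the coset $G_0\phi$ contains elements $x$ of order $9$ with $x^3$ a nontrivial unipotent element of $G_0$. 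For such $x$ we have $C_{G_0}(x) \leqs C_{G_0}(x^3) = U$, a group of odd order, so no involution commutes with $x$ and no subfield subgroup arises. Moreover, these elements lie in no conjugate of $A$ or $D$ (both have Sylow $3$-subgroups of order $3$ here), so they are not rescued by your first case; among the covering subgroups they lie only in Borel subgroups, which have odd order since $27 \equiv 3 \imod{4}$ and $G \cap {\rm PGL}_2(27) = G_0$. The repair is what the paper does: the covering $G = \bigcup_g A^g \cup \bigcup_g B^g$ accounts for all elements of the almost simple group $G$, outer ones included, and your two-step Borel argument nowhere uses that $x$ is inner or unipotent, so it already disposes of every $x$ lying in a conjugate of $B$. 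The separate field-automorphism case is thus both unnecessary and incorrect as stated.
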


\begin{proof}
Define $A$ and $B$ as in the proof of Theorem \ref{t:psl2} and recall that we may assume $x$ is contained in $A$ or $B$ (see \eqref{e:cover}). Here $A$ is soluble with even order and thus every element in $A$ is adjacent to an involution. The same conclusion holds if $x \in B$ and $q \not\equiv 3 \imod{4}$. Finally, suppose $x \in B$ and $q \equiv 3 \imod{4}$. Fix $z \in B$ of order $(q-1)/2$ and note that $|N_G(\la z \ra)|$ is even. Therefore, if $y \in N_G(\la z \ra)$ is an involution, then $x \sim z \sim y$ and the result follows. 
\end{proof}

\subsection{Exceptional groups}\label{ss:excep}

For the remainder of Section \ref{s:lie}, we will assume $G_0$ is a finite simple group of Lie type over $\mathbb{F}_q$. We begin by fixing some standard notation. 

Write $G_0 = (\bar{G}_{\s})'$, where $\bar{G}$ is a simple algebraic group of adjoint type defined over the algebraic closure of $\mathbb{F}_q$ and $\s$ is an appropriate Steinberg endomorphism of $\bar{G}$. Let $\widetilde{G} = \bar{G}_{\s}= {\rm Inndiag}(G_0)$ be the subgroup of ${\rm Aut}(G_0)$ generated by the inner and diagonal automorphisms of $G_0$. In addition, write $q=p^f$ with $p$ a prime. 

Recall that a subgroup $B$ of $G_0$ is a Borel subgroup if $B =N_{G_0}(P)$, where $P$ is a Sylow $p$-subgroup of $G_0$. The following elementary lemma will be useful. 

\begin{lem}\label{l:borel}
Let $G_0$ be a finite simple group of Lie type over $\mathbb{F}_q$ and let $B$ be a Borel subgroup of $G_0$. Then $|B|$ is odd if and only if $G_0$ is isomorphic to ${\rm L}_{2}(q)$ with $q \equiv 3 \imod{4}$.
\end{lem}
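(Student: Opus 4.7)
The plan is to use the Levi decomposition $B = U \rtimes H$ of a Borel subgroup, where $U$ is the unipotent radical — a Sylow $p$-subgroup of $G_0$ with $q = p^f$ — and $H$ is a complement arising from a maximally split $\sigma$-stable torus of the ambient adjoint algebraic group. The reverse implication is then immediate: for $G_0 = {\rm L}_2(q)$ with $q$ odd, $|B| = q(q-1)/2$, which is odd precisely when $q \equiv 3 \imod{4}$.

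For the forward direction, suppose $|B|$ is odd. Then $q$ must be odd; otherwise $U$ is a nontrivial $2$-group. Hence it suffices to prove that if $q$ is odd and $|H|$ is odd, then $G_0 \cong {\rm L}_2(q)$. I would read off $|H|$ from the classification of finite simple groups of Lie type: for an untwisted Chevalley group of rank $r$, $|H| = (q-1)^r/d$, where $d$ divides the order of the centre of the simply connected cover (so $d \mid \gcd(n+1, q-1)$ for type $A_n$, $d \mid \gcd(2, q-1)$ for types $B_n, C_n$, and so on); for twisted types, an analogous formula holds with $q - 1$ replaced by appropriate divisors of $q^k - 1$ or $q^k + 1$. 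In every case with algebraic rank $r \geq 2$, the torus contributes a factor of the form $(q \pm 1)^r/d$ with $v_2(d) \leq v_2(q \mp 1)$, so $v_2(|H|) \geq (r-1)\,v_2(q \pm 1) \geq 1$ since $q$ is odd.

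This reduces the problem to the finite simple groups of algebraic rank $1$ over $\mathbb{F}_q$ with $q$ odd, namely ${\rm L}_2(q)$ (the target case) and the Ree groups ${}^2G_2(q)$ with $q = 3^{2k+1}$. The Suzuki groups are ruled out since they require $q$ even, and the unitary group ${\rm U}_3(q)$ has algebraic rank $2$, so its Borel contains the even factor $q^2 - 1$. For ${}^2G_2(q)$ with $q$ odd, the complement $H$ is cyclic of order $q - 1$, which is even. The only potential obstacle in this analysis is careful tracking of the diagonal factor $d$ in each family, but the $2$-adic valuation argument sketched above resolves this uniformly: in all classical and exceptional types $d$ divides a small integer bounded by the order of the centre, and in particular $v_2(d) \leq v_2(q \pm 1)$, so the powers of $2$ coming from the torus cannot be fully absorbed.
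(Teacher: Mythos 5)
Your argument is correct and takes essentially the same route as the paper: reduce to $q$ odd via the unipotent radical and then verify that the maximally split torus has even order in every case other than ${\rm L}_2(q)$ (the paper only writes out the linear and unitary cases explicitly and omits the remaining details). One minor inaccuracy: for type $D_n$ with $q \equiv 3 \imod{4}$ the diagonal factor $d$ can equal $4$ while $v_2(q-1)=1$, so your uniform claim $v_2(d)\leqslant v_2(q\pm 1)$ fails there, though the conclusion survives because $n\geqslant 4$ gives $v_2(|H|)=n\,v_2(q-1)-v_2(d)\geqslant 2$.
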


\begin{proof}
We may assume $q$ is odd. If $G_0 = {\rm L}_2(q)$ then $|B| = q(q-1)/2$ and thus $|B|$ is odd if and only if $q \equiv 3 \imod{4}$. If $G_0 = {\rm L}_n^{\e}(q)$ with $n \geqs 3$, then
\[
|B| = \left\{\begin{array}{ll}
\frac{1}{d}q^{n(n-1)/2}(q-1)^{n-1} & \mbox{if $\e=+$} \\
\frac{1}{d}q^{n(n-1)/2}(q^2-1)^r & \mbox{if $\e=-$,}
\end{array}\right.
\]
where $d=(n,q-\e)$ and $r \geqs 1$ is the twisted Lie rank of ${\rm U}_n(q)$. In both cases we see that $|B|$ is even and it is straightforward to check that the same conclusion holds in all the remaining cases. We omit the details.
\end{proof}

For the remainder of Section \ref{ss:excep}, we will assume $G_0$ is a simple exceptional group of Lie type. The following result is a key tool in our proof of Theorem \ref{t:main1}. Recall that a prime divisor $r$ of $q^m-1$ is a \emph{primitive prime divisor} if $q^i-1$ is indivisible by $r$ for all $1 \leqs i < m$.

\begin{prop}\label{p:ex_real}
Let $G_0$ be a simple exceptional group of Lie type and let $x \in G$ be an element of prime order $r$. Then either
\begin{itemize}\addtolength{\itemsep}{0.2\baselineskip}
\item[{\rm (i)}] $x$ is adjacent in $\Gamma_{\mathcal{S}}(G)$ to an involution; or
\item[{\rm (ii)}] $G_0 = E_6^{\e}(q)$ and $r$ is a primitive prime divisor of $q^{9a}-1$, where $a=\frac{1}{2}(3-\e)$. 
\end{itemize}
\end{prop}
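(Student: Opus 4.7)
The plan is to split into two cases depending on whether $r$ equals the defining characteristic $p$ of $G_0$.

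Suppose first that $r = p$. Then $x$ is unipotent and a suitable conjugate of $x$ lies in a Borel subgroup $B$ of $G_0$. Since $G_0$ is exceptional, it is not isomorphic to ${\rm L}_2(q)$ with $q \equiv 3 \imod{4}$, so Lemma~\ref{l:borel} gives $|B|$ even. Therefore $B$ contains an involution $y$, and because $B$ is soluble we have $\langle x, y\rangle \leqs B$ soluble and $x \sim y$ in $\Gamma_{\mathcal{S}}(G)$.

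Suppose next that $r \ne p$, so $x$ is semisimple and, by standard theory, lies in a finite maximal torus $T_0 = \bar T^{\s}$ of $G_0$ for some $\s$-stable maximal torus $\bar T \leqs \bar G$. The key observation is that $T_0$ is abelian, so for any involution $y \in N_G(T_0)$ the subgroup $T_0\langle y\rangle$ is a metabelian extension of $T_0$ by a group of order $2$, and hence $\langle x, y\rangle \leqs T_0\langle y\rangle$ is soluble. The task therefore reduces to showing that, unless the exceptional case in (ii) holds, one can find a maximal torus $T_0$ containing $x$ with $|N_G(T_0)|$ even.

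To carry this out, I would use the standard parametrisation of $G_0$-conjugacy classes of maximal tori by $\s$-conjugacy classes $[w]$ in the Weyl group $W$: the torus $T_w$ has order determined by the action of $w\s$ on the cocharacter lattice, and $N_{G_0}(T_w)/T_w$ is a subquotient of $C_W(w\s)$. A primitive prime divisor $r$ of $q^m-1$ forces $x$ into tori for which $w\s$ has order divisible by $m$, severely restricting the possibilities when $m$ approaches the order of $W$. I would then inspect the exceptional families $\{{}^2B_2(q), {}^2G_2(q), G_2(q), {}^3D_4(q), F_4(q), {}^2F_4(q), E_6(q), {}^2E_6(q), E_7(q), E_8(q)\}$ in turn, using standard tables of maximal tori and their normalisers, and verify that outside the stated exception some torus $T_w \ni x$ has even normaliser---either because the relevant cyclotomic factor $\Phi_d(q)$ dividing $|T_w|$ is even, or because $C_W(w\s)$ contains an involution. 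The exception in $E_6^\e(q)$ arises because a prime $r$ dividing $\Phi_{9a}(q)$ forces $x$ into a unique maximal torus, which is cyclic of the odd order $\Phi_{9a}(q)$ with Weyl quotient $C_9$ (also odd), so every maximal torus containing $x$ has odd normaliser and this strategy cannot produce an adjacent involution.

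The main obstacle is the uniform case-by-case verification of the torus/normaliser data across the exceptional families---a bookkeeping task requiring careful use of standard references, but no new ideas. A secondary issue is handling elements $x \in G \setminus G_0$ (field, graph or graph-field automorphisms): here one needs to check that such an $x$ of prime order either centralises an involution or can be embedded in a soluble overgroup containing one, which can be done by an analogous but lighter inspection of the outer automorphism groups in each exceptional family.
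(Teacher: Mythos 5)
Your proposal follows the same skeleton as the paper's proof (Borel subgroup for the unipotent case; torus normalisers for the semisimple case; the order-$9$ class in $W(E_6)$ producing the cyclic torus of order $q^6+\e q^3+1$ as the exception), and your reduction to finding a maximal torus $T_0 \ni x$ with $|N_G(T_0)|$ even is exactly the right one. Where you diverge is at the crux: you propose to verify, family by family and torus by torus, that outside the $E_6^{\e}$ exception some torus containing $x$ has even normaliser. The paper replaces this bookkeeping with a one-line uniform argument: for every exceptional type other than $E_6^{\e}(q)$, the Weyl group $W$ contains a central involution (the longest element acts as $-1$), which acts by inversion on every maximal torus of $\widetilde{G}$; hence every semisimple element is real, $|N_G(\la x\ra)|$ is even, and $x$ is adjacent to an involution with no tables needed. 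For $E_6^{\e}(q)$ one then only has to scan the $25$ classes of $W(E_6)$ for odd centraliser order, finding just the order-$9$ class. Your route would reach the same answer but at the cost of a genuinely heavy case analysis, and it silently relies on the fact (which the central-involution observation makes transparent) that no odd-normaliser torus occurs in $G_2$, $F_4$, $E_7$, $E_8$ or the twisted types. Two smaller points you defer that the paper handles explicitly: non-regular semisimple elements (which lie in several tori; the paper simply notes their centralisers have even order), and elements of $G\setminus\widetilde{G}$, where the only cases are field automorphisms and the ${}^3D_4(q)$ graph automorphism of order $3$, all of whose centralisers in $G_0$ visibly have even order. These are minor, but your proof is not complete until they are written out.
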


\begin{proof}
Notice that (i) holds if $|C_{G_0}(x)|$ is even or $x$ is real, so we may assume $r$ is odd. In particular, we observe that (i) holds if $C_{G_0}(x)$ is insoluble.

First assume $x \in G \setminus \widetilde{G}$. Then either $q = q_0^r$ and $x$ is a field automorphism, or $r=3$, $G_0 = {}^3D_4(q)$ and $x$ is a graph automorphism. In both cases, it is easy to see that $|C_{G_0}(x)|$ is even and thus $x$ is adjacent to an involution. For example, if $G_0 = F_4(q)$ and $x$ is a field automorphism, then $C_{G_0}(x) = F_4(q_0)$. Similarly, if $G_0 = {}^3D_4(q)$ and $x$ is a graph automorphism, then 
\begin{equation}\label{e:cent}
C_{G_0}(x) \in \{G_2(q), {\rm PGL}_{3}^{\e}(q), [q^5].{\rm SL}_{2}(q)\}
\end{equation}
and the claim follows.   

For the remainder, we may assume $x \in \widetilde{G}$ is unipotent or semisimple. 
First assume $x$ is unipotent, so $r=p$ and $G_0 \ne {}^2B_2(q), {}^2F_4(q)'$. Let $P$ be a Sylow $p$-subgroup of $G_0$ containing $x$ and consider the Borel subgroup $B = N_{G_0}(P)$. Then $B$ is soluble and it contains an involution $z$ by Lemma \ref{l:borel}. Therefore, $\la x, z\ra \leqs B$ is soluble and thus $x \sim z$.

Finally, let us assume $x$ is semisimple. Let $W = N_{\bar{G}}(\bar{T})/\bar{T}$ be the Weyl group of $\bar{G}$, where $\bar{T}$ is a $\s$-stable maximal torus  of $\bar{G}$ containing $x$. If $G_0 \ne E_6^{\e}(q)$, then $W$ contains a central involution, which acts by inversion on every maximal torus of $\widetilde{G}$. Therefore, every semisimple element in $\widetilde{G}$ is real.

So to complete the proof, we may assume $G_0 = E_6^{\e}(q)$. If $x$ is non-regular, then it is easy to see that $|C_{G_0}(x)|$ is even, so we may assume $x$ is a regular semisimple element that is contained in a unique maximal torus $T$ of $\widetilde{G}$. In addition, we can assume $|T|$ is odd. 

Recall that there is a natural action of $\s$ on $W$ and the $\s$-class of $s \in W$ is defined to be the subset $\{w^{\s}sw^{-1} \, : \, w \in W\}$ (so if $\s$ acts trivially on $W$, then this coincides with the usual conjugacy class of $s$ in $W$). As a consequence of the Lang-Steinberg theorem, there is a bijection from the set of $\widetilde{G}$-classes of maximal tori in $\widetilde{G}$ to the set of $\s$-classes in $W$ (see \cite[Chapter 25]{MT}, for example). Moreover, if $T$ corresponds to the $\s$-class of $s \in W$, then  
\begin{equation}\label{e:ngt}
|N_{G_0}(T)| = |T_0||C_W(s)|
\end{equation}
with $T_0 = T \cap G_0$. As a consequence, if $|C_W(s)|$ is even then there exists an involution $z \in N_{G_0}(T)$ and we deduce that $x \sim z$ since $\la x, z\ra$ is contained in the soluble subgroup $\la T, z\ra \leqs \widetilde{G}$. Now $W$ has $25$ conjugacy classes and one can check that $|C_W(s)|$ is even unless $s$ has order $9$ (there is a unique class of such elements), in which case   $C_W(s) = \la s \ra$. In terms of the above bijection, the $\s$-class of $s$ corresponds to a cyclic torus $T$ of order $q^6 + \e q^3 +1$ and it follows that $r$ is a primitive prime divisor of $q^{9a}-1$, where $a = \frac{1}{2}(3-\e)$. This is the special case arising in part (ii) of the proposition and so the proof is complete.
\end{proof}

\begin{rem}\label{r:exsp}
The special case highlighted in part (ii) of Proposition \ref{p:ex_real} is a genuine exception. For instance, let us assume $G = \tilde{G} = G_0 = E_6(q)$ and $x \in G$ has order $q^6+q^3+1$ (for example, we could take $q=2$, in which case $x$ has order $73$). By Weigel \cite[Section 4(g)]{W}, $x$ is contained in a unique maximal subgroup of $G$, namely $H =  {\rm L}_{3}(q^3).3$, so $B_1(x)$ is a subset of $H$. Moreover, by considering the subgroups of $H$, we deduce that $x$ is contained in a unique maximal soluble subgroup of $H$, namely $L = N_G(\la x \ra) = C_{q^6+q^3+1}{:}9$. This implies that $B_1(x)$ coincides with the set of nontrivial elements in $L$ and thus $x$ is not adjacent to an involution.
\end{rem}

\begin{cor}\label{c:ex}
Let $G$ be an almost simple group with socle $G_0$, an exceptional group of Lie type. Then for all nontrivial $x \in G$, there exists an involution $y \in G$ with $\delta(x,y) \leqs 2$. In particular, $\delta_{\mathcal{S}}(G) \leqs 5$.
\end{cor}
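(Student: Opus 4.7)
The plan is to deduce the corollary from Proposition \ref{p:ex_real} via a reduction to elements of prime order. The bound $\delta_{\mathcal{S}}(G) \leqs 5$ is immediate from the first assertion: the involutions of $G$ form a clique in $\Gamma_{\mathcal{S}}(G)$ (any two generate a dihedral, hence soluble, group), so any two nontrivial vertices are joined by a path of length at most $2+1+2 = 5$.

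For the first assertion, let $x \in G$ be nontrivial, pick any prime divisor $r$ of $|x|$, and set $x^{\ast} = x^{|x|/r}$. Then $\la x, x^{\ast}\ra = \la x \ra$ is cyclic, so $x \sim x^{\ast}$ (or $x = x^{\ast}$). Proposition \ref{p:ex_real} produces an involution $y$ with $x^{\ast} \sim y$, hence $\delta(x, y) \leqs 2$, unless we fall into case (ii): $G_0 = E_6^{\e}(q)$ and $r$ is a primitive prime divisor of $q^{9a}-1$, where $a = \frac{1}{2}(3-\e)$. In this situation $r$ is semisimple (coprime to $p$) and $r \geqs 5$, since neither $2$ nor $3$ is primitive for $q^9-1$ or $q^{18}-1$. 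So if $|x|$ has even one prime divisor outside the bad set, we are done.

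Otherwise $G_0 = E_6^{\e}(q)$ and $|x|$ divides $(q^6 + \e q^3 + 1)/d$, where $d = (3, q-\e)$; in particular $x$ is semisimple, and since only one $G_0$-class of maximal tori has order involving the cyclotomic factor $\Phi_{9a}(q)$, $x$ lies in a conjugate of the cyclic torus $T_0$ of order $(q^6+\e q^3+1)/d$. Cyclicity of $T_0$ gives $N_G(T_0) \leqs N_G(\la x \ra)$, and we construct an order-$3$ element $w$ adjacent to $x$ as follows. If $3 \mid |T_0|$ (which forces $p \ne 3$), take $w \in T_0$ of order $3$; then $w$ commutes with $x$, so $x \sim w$. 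If $3 \nmid |T_0|$, then a Sylow $3$-subgroup of $N_{G_0}(T_0)$ embeds via $N_{G_0}(T_0)/T_0 \cong C_9$ as a cyclic group of order $9$, so we can choose $w \in N_{G_0}(T_0) \setminus T_0$ of order $3$; then $\la x, w\ra \leqs N_{G_0}(T_0)$ is metacyclic and therefore soluble, giving $x \sim w$.

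Finally we exhibit an involution $y$ with $w \sim y$. If $p = 3$ (which forces $3 \nmid |T_0|$ and so $w \in N_{G_0}(T_0) \setminus T_0$ is unipotent), then $w$ lies in a Borel subgroup $B$ of $G_0$; by Lemma \ref{l:borel}, $|B|$ is even, and $B$ is soluble, so any involution $y \in B$ yields $w \sim y$. If $p \ne 3$, then $w$ is a semisimple element of order $3$, and inspection of the centralizer types for such elements in $E_6^{\e}(q)$ --- subsystem subgroups such as $A_2^3$, $A_5 A_1$ and $D_5 T_1$ --- shows that $|C_{G_0}(w)|$ is always even; picking $y$ to be an involution in $C_{G_0}(w)$ gives $\la y, w\ra$ abelian, hence $w \sim y$. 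In either case $x \sim w \sim y$ proves $\delta(x, y) \leqs 2$. The main obstacle is the $E_6^{\e}$ exceptional case, specifically the verification that every order-$3$ element $w$ appearing in the construction is adjacent to an involution, which reduces to the small centralizer classification in $E_6^{\e}(q)$ together with the Borel argument in defining characteristic $3$.
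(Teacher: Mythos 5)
Your overall strategy is the same as the paper's: reduce to a power of prime order, invoke Proposition \ref{p:ex_real}, and then deal with the exceptional case $G_0 = E_6^{\e}(q)$ by routing $x$ through an element of order $3$. The final deduction of $\delta_{\mathcal{S}}(G) \leqs 5$ from the clique of involutions is exactly right.

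There is, however, one step that is not justified as written. In the exceptional case you assert that $|x|$ divides $(q^6+\e q^3+1)/d$, that $x$ is semisimple, and that $x$ lies in a conjugate of the cyclic torus $T_0$. This is correct when $x \in \widetilde{G} = {\rm Inndiag}(G_0)$, since then $x \in C_{\widetilde{G}}(z) = T$ for $z$ a prime-order power of $x$, and the index of $T_0$ in $T$ is $d \leqs 3$, coprime to $|x|$. But $G$ is only assumed almost simple, and a primitive prime divisor $r$ of $q^{9a}-1$ can in principle divide the order of a field automorphism: the only constraint on $r$ is $r \equiv 1 \imod{9a}$, and one can arrange $r \mid f$ with $q=p^f$. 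So you cannot immediately exclude an element $x \in G \setminus \widetilde{G}$, say of order $r^2$ with $x\widetilde{G}$ of order $r$, all of whose prime-order powers lie in $\widetilde{G}$ and in the bad torus; such an $x$ is neither semisimple nor contained in any maximal torus of $\widetilde{G}$, and your construction of $w$ then has no starting point. The fix is precisely the paper's move: rather than placing $x$ inside $T_0$, observe only that $x$ centralises $z$, hence lies in the soluble group $N_G(\la z\ra)$, which contains $N_{G_0}(T_0) = T_0{:}9$ and therefore an element $w$ of order $3$ in $G_0$ adjacent to $x$. From there your final step is correct but redundant: once you have an order-$3$ element $w$ of $G_0$, Proposition \ref{p:ex_real} applied with $r=3$ already yields an involution adjacent to $w$, since $3$ is never a primitive prime divisor of $q^{9a}-1$ (it divides $q^2-1$ when $p\ne 3$, and the unipotent case is the Borel subgroup argument); there is no need to re-inspect centraliser types of order-$3$ semisimple elements.
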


\begin{proof}
Let $x \in G$ be nontrivial and fix $m \in \mathbb{N}$ so that $z = x^m$ has prime order $r$. Then by applying Proposition \ref{p:ex_real}, we may assume $G_0 = E_6^{\e}(q)$, $z$ is semisimple and $r$ is a primitive prime divisor of $q^{9a}-1$, where $a = \frac{1}{2}(3-\e)$. Here $T = C_{\widetilde{G}}(z)$ is a cyclic maximal torus of order $q^6+\e q^3+1$. Since $x$ commutes with $z$, we have $x \in N_G(\la z \ra)$, which is a soluble group of order divisible by $3$ (note that $N_{G_0}(T_0) \leqs N_G(\la z \ra)$ and $N_{G_0}(T_0) = T_0{:}9$, where $T_0 = T \cap G_0$). Therefore, $x$ is adjacent to an element of order $3$ in $G_0$, which in turn is adjacent to an involution by Proposition \ref{p:ex_real}.
\end{proof}

Although we do not know if the upper bound on $\delta_{\mathcal{S}}(G)$ in Corollary \ref{c:ex} is tight, we can demonstrate the existence of an exceptional group of Lie type with $\delta_{\mathcal{S}}(G) \geqs 4$. 

\begin{lem}\label{l:e62}
If $G = E_6(2)$ then $\delta_{\mathcal{S}}(G) \in \{4,5\}$.
\end{lem}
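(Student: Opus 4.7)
The upper bound $\delta_{\mathcal{S}}(G) \leqs 5$ is immediate from Corollary \ref{c:ex}, so the task reduces to establishing the lower bound $\delta_{\mathcal{S}}(G) \geqs 4$. The plan is to exhibit an element $x \in G$ and a $G$-conjugate $y = x^g$ with $\delta(x,y) \geqs 4$; by Remark \ref{r:useful}, it suffices to show that $\la a,b\ra$ is insoluble for every pair $(a,b) \in B_1(x) \times B_1(y)$.

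The natural candidate for $x$ is an element of order $q^6 + q^3 + 1 = 73$. By Remark \ref{r:exsp}, $x$ lies in a unique maximal subgroup $H = {\rm L}_{3}(8).3$ of $G$, and $B_1(x)$ coincides with the $656$ nontrivial elements of $L = N_G(\la x \ra) = 73{:}9$; the same description applies to $B_1(y)$ with $L$ replaced by $L^g$. The remaining question is therefore purely one of producing a single $g \in G$ such that $\la a,b\ra$ is insoluble for every pair $(a,b) \in L^{\#} \times (L^g)^{\#}$.

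I would carry out this verification in {\sc Magma}, realising $G$ via its $27$-dimensional representation over $\mathbb{F}_2$. Construct $x$ by random search in a torus of order $73$, compute $L = N_G(\la x \ra)$ and enumerate its $656$ nontrivial elements, and then iterate over random conjugates $y = x^g$, for each one checking all $656 \times 656$ pairs. The test that $\la a,b\ra$ is insoluble can be done directly, or, more efficiently, by asking whether the two-generator subgroup has a non-abelian simple composition factor. As soon as one $g$ passes, the lemma is proved, in direct parallel with the approach taken in Examples \ref{e:m23} and \ref{e:m24} for the Mathieu groups.

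The principal obstacle is computational rather than conceptual: $|E_6(2)|$ is of order $10^{23}$ and the group does not admit a small permutation representation, so all arithmetic must be done in the $27$-dimensional matrix representation, and on the order of $4 \times 10^5$ composition-factor tests are required per candidate conjugate. In practice this is manageable, and heuristically one expects most choices of $g$ to succeed, since each $a \in L^{\#}$ has order $3$, $9$ or $73$ and a generic element $b \in (L^g)^{\#}$ together with $a$ is very unlikely to lie in a proper soluble subgroup of the simple group $E_6(2)$.
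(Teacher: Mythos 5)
Your proposal is correct and follows essentially the same route as the paper: fix $x$ of order $73$, use Remark \ref{r:exsp} to identify $B_1(x)$ with the nontrivial elements of $N_G(\la x \ra) = 73{:}9$, and search for a conjugate $x^g$ such that $\la a,b\ra$ is insoluble for all $a \in B_1(x)$, $b \in B_1(x^g)$. The only difference is an implementation detail: the paper carries out the computation in a permutation representation of degree $279006$ rather than the $27$-dimensional matrix representation over $\mathbb{F}_2$.
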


\begin{proof}
We apply a computational approach, working with {\sc Magma} \cite{magma} and a permutation representation of $G$ of degree $279006$. Let $x \in G$ be an element of order $73 = 2^6+2^3+1$ and recall that $B_1(x)$ coincides with the set of nontrivial elements in $N_G(\la x \ra) =  73{:}9$ (see Remark \ref{r:exsp}). By implementing a  random search, we can find an element $g \in G$ such that $\la a,b \ra$ is insoluble for all $a \in B_1(x)$, $b \in B_1(x^g)$. This implies that $\delta(x,x^g) \geqs 4$ and the result follows. 
\end{proof}

Finally, we complete the proof of the claim in part (ii)(d) of Theorem \ref{t:main2}. 

\begin{prop}\label{p:exdiam}
Let $G$ be a finite simple exceptional group of Lie type. Then either 
\begin{itemize}\addtolength{\itemsep}{0.2\baselineskip}
\item[{\rm (i)}] $G = {}^2G_2(3)' \cong {\rm L}_2(8)$ and $\delta_{\mathcal{S}}(G) = 2$; or
\item[{\rm (ii)}] $\delta_{\mathcal{S}}(G) \geqs 3$.
\end{itemize}
\end{prop}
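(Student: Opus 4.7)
The plan is to split into two cases. If $G \cong {}^2G_2(3)' \cong {\rm L}_2(8)$, then since $q=8$ is even we have ${\rm PGL}_2(q) = {\rm L}_2(q) = G$, so Theorem \ref{t:psl2} immediately yields $\delta_{\mathcal{S}}(G)=2$.

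For every remaining simple exceptional group $G$, the goal is to apply Lemma \ref{l:base}: I would exhibit an element $x \in G$ of prime order $r$ that lies in a unique maximal subgroup $H$ of $G$, with $b(G,H)=2$. Lemma \ref{l:base} then yields $g \in G$ with $\delta(x, x^g) \geqs 3$, whence $\delta_{\mathcal{S}}(G) \geqs 3$. The natural candidates come from Zsigmondy primitive prime divisors attached to the largest cyclotomic factor of $|G|$: take $r$ a primitive prime divisor of $q^{9a}-1$ for $E_6^{\e}(q)$ with $a=\tfrac{1}{2}(3-\e)$ (as in Remark \ref{r:exsp}), of $q^{30}-1$ for $E_8(q)$, of $q^{18}-1$ for $E_7(q)$, of $q^{12}-1$ for $F_4(q)$, of $q^{12}-1$ for ${}^3D_4(q)$, of $q^6-1$ for $G_2(q)$, and the corresponding small-torus analogues involving $q \pm \sqrt{2q}+1$ or $q \pm \sqrt{3q}+1$ for the Suzuki, Ree and Ree-type groups. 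For such $r$ the uniqueness of the maximal overgroup $H$ is supplied by work of Weigel \cite{W} (compare the template in Remark \ref{r:exsp}) and by the proof of \cite[Theorem 4.1]{BH}; $H$ is typically the normaliser of a Coxeter-type maximal torus or a subfield-like subgroup of the form ${\rm L}_3(q^3).3$, ${\rm U}_3(q^3).3$ or similar. The base-size equality $b(G,H)=2$ is then supplied by the main theorem of \cite{BOW}.

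The main obstacle is that Zsigmondy's theorem excludes a finite list of small $q$ in each family, and in some of those cases the required primitive prime divisor coincides with a smaller prime, so the uniqueness or base-size property must be checked by hand. The relevant exceptions — the Tits group ${}^2F_4(2)'$ and a short list such as $G_2(3)$, $G_2(4)$, ${}^3D_4(2)$, $F_4(2)$ and $E_6^{\e}(2)$ — can be handled by direct computation in {\sc Magma} along the lines of Lemma \ref{l:e62}: pick an element $x$ of suitable order, construct $B_1(x)$ by running over the soluble overgroups via the method of Section \ref{ss:comp}, and search for a conjugate $x^g$ with $B_1(x) \cap B_1(x^g) = \emptyset$. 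These groups are small enough that this computation is routine, so the only genuine difficulty is the uniform bookkeeping across the generic part of each family.
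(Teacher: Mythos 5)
Your first case is fine, and the overall strategy (find $x$ whose soluble neighbourhood is confined to a small subgroup $H$ with $b(G,H)=2$) is the right idea for a handful of families, but as written the generic step has two concrete gaps, and the route is also very different from (and much heavier than) the paper's.

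First, the claim that a primitive-prime-divisor element lies in a \emph{unique maximal subgroup} is simply false in several of your families, so Lemma \ref{l:base} does not apply as stated. For example, in $G_2(q)$ an element of order $q^2-q+1$ also lies in a maximal subgroup of type ${\rm SU}_3(q).2$ (since $q^2-q+1$ divides $q^3+1$); in $F_4(q)$ an element of order $q^4-q^2+1$ also lies in ${}^3D_4(q).3$ (and in ${}^2F_4(q)$ when $q$ is an odd power of $2$); and in $E_7(q)$ a primitive prime divisor of $q^{18}-1$ divides $q^6-q^3+1$, which divides the order of a maximal-rank subgroup involving ${}^2E_6(q)$. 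What one actually needs, and what the paper proves for $G_2(q)$ and $F_4(q)$ using Weigel's analysis \cite{W}, is the weaker statement that every \emph{soluble} overgroup of $x$ is contained in $L=N_G(T)$, so that $B_1(x)$ is the set of nontrivial elements of $L$; this requires examining the subgroup structure of the non-soluble overgroups and is not the same as uniqueness of the maximal overgroup. Second, your base-size citation is wrong: \cite{BOW} treats sporadic groups only. The input you need, $b(G,N_G(T))=2$ for these torus normalisers, is supplied by \cite{BT} (see also \cite{B20}), and for $E_6^{\e}(q)$, where the unique maximal overgroup is ${\rm L}_3^{\e}(q^3).3$ rather than a torus normaliser, one must still pass to $N_G(\la x\ra)=T.9$ before invoking a base-size result. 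Finally, note that the paper avoids almost all of this bookkeeping: by \cite[Theorem 4]{BH2} the uniform domination number satisfies $\gamma_u(G)=2$ for every simple exceptional group other than ${}^2F_4(2)'$, $F_4(q)$ and $G_2(q)'$, and $\gamma_u(G)=2$ immediately forces $\delta_{\mathcal{S}}(G)\geqs 3$ (the two witnessing conjugates have no common neighbour). Only the three excluded families then need the torus-normaliser argument or a computation. If you want to salvage your approach, you must replace ``unique maximal overgroup'' by a genuine determination of all soluble overgroups in each family, and correct the base-size references.
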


\begin{proof}
If $G = {}^2G_2(3)'$ then we can appeal to Theorem \ref{t:psl2}, so for the remainder we may assume $G \ne {}^2G_2(3)'$. Following \cite{BH}, let $\gamma_u(G)$ be the uniform domination number of $G$. This is defined to be the minimal size of a set of conjugate elements $\{x_1, \ldots, x_k\}$ such that for all nontrivial $y \in G$, there exists $i \in \{1, \ldots, k\}$ such that $G = \la x_i,y\ra$. Note that $\delta_{\mathcal{S}}(G) \geqs 3$ if $\gamma_u(G) = 2$. Indeed, if $\{x_1,x_2\}$ has the given property, then $G = \la x_1, x_2\ra$ and there is no nontrivial element $y \in G$ with $x_1 \sim y \sim x_2$ because either $\la x_1, y\ra$ or $\la x_2,y\ra$ is equal to $G$. Therefore, in view of \cite[Theorem 4]{BH2}, we immediately deduce that $\delta_{\mathcal{S}}(G) \geqs 3$ if $G \ne {}^2F_4(2)', F_4(q), G_2(q)'$, so it remains to handle these three special cases.

The group $G = {}^2F_4(2)'$ can be handled using {\sc Magma}. If $x \in G$ has order $13$ then $|B_1(x)| = 77$ and it is easy to find an element $g \in G$ such that $B_1(x) \cap B_1(x^g)$ is empty, which implies that $\delta(x,x^g) \geqs 3$.

Next assume $G = G_2(q)'$. If $q=2$ then $G$ is isomorphic to ${\rm U}_{3}(3)$ and we note that $\delta_{\mathcal{S}}(G) = 3$ (for example, see the proof of Proposition \ref{p:small}). Now assume $q \geqs 3$ and let $x \in G$ be an element of order $q^2-q+1$, so $T = \la x \ra$ is a maximal torus of $G$. If $q \in \{3,4\}$ then we can use {\sc Magma} to establish the existence of an element $g \in G$ such that $\delta(x,x^g) \geqs 3$. For $q \geqs 5$, the overgroups of $T$ are described by Weigel \cite[Section 4(d)]{W} and we deduce that $B_1(x)$ coincides with the set of nontrivial elements in $L = N_G(T) = T.6$. Since $b(G,L) = 2$ by \cite{BT}, we deduce that $\delta_{\mathcal{S}}(G) \geqs 3$ via Lemma \ref{l:base}.

Finally, suppose $G = F_4(q)$ and let $x \in G$ be an element of order $q^4-q^2+1$. If $q \geqs 4$, then \cite[Section 4(f)]{W} implies that $B_1(x)$ is the set of nontrivial elements in $L = N_G(T) = T.12$, where $T = \la x \ra$. Since $b(G,L) = 2$ (see \cite{BT}), the result now follows as in the previous case. The same argument also goes through when $q=3$. Indeed, as noted in \cite[Table IV]{GK}, $x$ is contained in a unique maximal subgroup of $G$, namely $H = {}^3D_4(3).3$. In turn, $L = N_H(T) = T.12$ is the unique maximal subgroup of $H$ containing $x$ and the desired result follows via \cite{BT}. Finally, let us assume $q=2$. Here we take $T = \la x \ra$ to be a maximal torus of order $q^4+1 = 17$. Then as recorded in \cite[Table IV]{GK}, we see that $x$ is contained in precisely two maximal subgroups of $G$, which are representatives of the two conjugacy classes of subgroups isomorphic to ${\rm Sp}_8(2)$ in $G$. By working in ${\rm Sp}_{8}(2)$, we find that $B_1(x)$ coincides with the set of nontrivial elements in $L = N_G(T) = T.8$ and once again we conclude by applying \cite{BT}, which gives $b(G,L) = 2$. 
\end{proof}

\subsection{Classical groups}\label{ss:class}

Now let us assume $G$ is an almost simple classical group over $\mathbb{F}_q$ with socle $G_0$. In view of Theorem \ref{t:psl2}, we may assume $G_0$ is not isomorphic to a $2$-dimensional linear group. First we establish an analogue of Proposition \ref{p:ex_real}. 

\begin{prop}\label{p:class_real}
Let $G_0 \ne {\rm L}_2(q)$ be a simple classical group and let $x \in G$ be an element of prime order $r$. Then one of the following holds:
\begin{itemize}\addtolength{\itemsep}{0.2\baselineskip}
\item[{\rm (i)}] $x$ is adjacent in $\Gamma_{\mathcal{S}}(G)$ to an involution.
\item[{\rm (ii)}] $G_0 = {\rm L}_{n}^{\e}(q)$, $n$ is odd and $r$ is a primitive prime divisor of $q^{an}-1$, where $a = \frac{1}{2}(3-\e)$.
\item[{\rm (iii)}] $G_0 = {\rm U}_{n}(q)$, $n$ is even and $r$ is a primitive prime divisor of $q^{2(n-1)}-1$. 
\item[{\rm (iv)}] $G_0 = {\rm P\O}_{n}^{-}(q)$, $n \equiv 2 \imod{4}$, $n \geqs 10$ and $r$ is a primitive prime divisor of $q^{n}-1$.
\end{itemize}
\end{prop}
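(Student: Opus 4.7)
The proof follows the outline of Proposition \ref{p:ex_real}. Since (i) holds whenever $|C_{G_0}(x)|$ is even or $x$ is real, I may assume throughout that $r$ is odd. I would first dispose of the cases where $x$ lies outside $\widetilde{G}$ or is unipotent. If $x \in G \setminus \widetilde{G}$ then $x$ is a field, graph or graph-field automorphism, and its fixed-point subgroup in $G_0$ is either a smaller classical group, a classical group over a proper subfield, or $G_2(q)$ in the triality case for ${\rm P\O}_{8}^+(q)$; in each instance $C_{G_0}(x)$ has even order because $G_0 \ne {\rm L}_2(q)$. If $x$ is unipotent, then $x$ lies in a Borel subgroup $B$ of $G_0$ and Lemma \ref{l:borel} gives an involution $z \in B$ with $\la x,z\ra \leqs B$ soluble. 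This reduces the problem to the case where $x \in \widetilde{G}$ is semisimple of odd prime order.

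If $x$ is non-regular, then $C_{\widetilde{G}}(x)$ is a proper Levi subgroup of positive semisimple rank of classical type, whose $\mathbb{F}_q$-points contain a commuting involution, yielding (i). So I may assume $x$ is regular semisimple and lies in a unique maximal torus $T$ of $\widetilde{G}$, corresponding via the Lang--Steinberg bijection to a $\s$-class $[s]$ in the Weyl group $W$. Formula \eqref{e:ngt} gives $|N_{G_0}(T_0)| = |T_0||C_W(s)|$ with $T_0 = T \cap G_0$, and if either factor is even then an involution in $N_{G_0}(T_0)$ normalises $T_0$, producing a soluble subgroup $\la T_0, z\ra$ containing $x$. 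It therefore remains to show that the simultaneous oddness of $|T_0|$ and $|C_W(s)|$ forces one of the configurations in (ii), (iii) or (iv).

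I would carry out this analysis type by type. For types $B_m$ and $C_m$, the central element $-1 \in W$ forces $|C_W(s)|$ to be even, so no exceptions arise. For type $D_m$, the element $-1$ lies in $W(D_m)$ iff $m$ is even; when $m$ is odd, an inspection of signed-cycle centralisers (with the appropriate $\s$-twist for ${}^2D_m$) shows that only the Coxeter-type class has odd centraliser, and in the twisted case its associated torus has cyclic order dividing $q^m+1$, yielding case (iv) with $n = 2m$, while the untwisted analogue is excluded because $|T_0|$ acquires an even factor. For type $A_{n-1}$ with $W = S_n$, the quantity $|C_{S_n}(s)| = \prod_i i^{a_i} a_i!$ is odd exactly when $s$ is a product of distinct odd cycles; combining this with the unitary $\s$-twist and a computation of $T_0$ modulo the diagonal factor $(n,q-\e)$ leaves a single Coxeter-like class, producing the Singer torus of case (ii) when $n$ is odd and the torus of order dividing $q^{n-1}+1$ in ${\rm U}_n(q)$ of case (iii) when $n$ is even. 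In each surviving case the regularity of $x$ forces the minimal $i$ with $r \mid q^i - 1$ to match the degree of the torus, which is exactly the primitive prime divisor condition stated.

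The main obstacle is the last paragraph: one must enumerate the odd-centraliser $\s$-classes in each relevant Weyl group (with and without twists), track the diagonal quotient between $\widetilde{G}$ and $G_0$, and check in every surviving case both that $r$ meets the exact primitive prime divisor condition and that no unexpected Levi overgroup of $x$ furnishes a commuting involution. A handful of small-rank and small-field configurations may require direct verification, but conceptually the argument runs in close parallel to the proof for exceptional groups.
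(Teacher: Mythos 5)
Your overall strategy tracks the paper's proof closely: the same reduction to odd $r$, the same treatment of elements outside $\widetilde{G}$ via even centralisers, the Borel subgroup argument for unipotent elements using Lemma \ref{l:borel}, the use of a central involution in the Weyl group (equivalently, reality of semisimple elements) to dispose of types $B$, $C$ and $D_{n/2}$ with $n \equiv 0 \imod{4}$, and the Lang--Steinberg analysis of $|N_{G_0}(T)| = |T_0||C_W(s)|$ for the remaining types. Your enumeration of the odd-centraliser classes is also consistent with the paper: for $W = S_n$ the surviving partitions are $(n)$ when $n$ is odd and $(n-1,1)$ when $n$ is even, and for type $D_{n/2}$ with $n \equiv 2 \imod{4}$ only the $\tfrac{n}{2}$-cycle survives.

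There is, however, a genuine gap in how you dispose of the split configurations. You assert that the untwisted cases --- $G_0 = {\rm L}_n(q)$ with $n$ even and the torus attached to $(n-1,1)$ of order dividing $q^{n-1}-1$, and $G_0 = {\rm P\O}_n^{+}(q)$ with $n \equiv 2 \imod{4}$ and the torus of order dividing $q^{n/2}-1$ --- are ``excluded because $|T_0|$ acquires an even factor''. This is false in exactly the situations that matter: when $q$ is even, $q^{n-1}-1$ and $q^{n/2}-1$ are odd, so $|T_0|$ is odd, no involution normalises $T_0$, and these configurations are live. Since they do not appear in conclusions (ii)--(iv), something else must be done to show that such an $x$ still satisfies (i), and this is the idea missing from your proposal. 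The paper's argument is as follows: in the linear case $x$ is regular with $\dim C_V(x)=1$, and in the orthogonal case $x$ stabilises a maximal totally singular subspace, so in either case $x$ lies in a Levi factor $L$ of a maximal parabolic subgroup $H = QL$ whose unipotent radical $Q$ is elementary abelian; then $\la Q, x \ra$ is soluble, and because $q$ is even $Q$ is a nontrivial $2$-group, so $x$ is adjacent to every involution in $Q$. Without this parabolic argument your case analysis does not close, and parity of $|T_0|$ alone cannot close it.
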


\begin{proof}
We may assume $r$ is odd. If $x \in G \setminus \widetilde{G}$ then either $q=q_0^r$ and $x$ is a field automorphism, or $G_0 = {\rm P\O}_{8}^{+}(q)$, $r=3$ and $x$ is a triality graph or graph-field automorphism. In each of these cases, it is easy to check that $|C_{G_0}(x)|$ is even and thus (i) holds. For example, if $G_0 = {\rm P\O}_{8}^{+}(q)$ and $x$ is a triality graph automorphism, then the possibilities for $C_{G_0}(x)$ are given in \eqref{e:cent}. Similarly, if $x$ is a triality graph-field automorphism then $q=q_0^3$ and $C_{G_0}(x) = {}^3D_4(q_0)$.

Now assume $x \in \widetilde{G}$. If $r=p$ then we can repeat the argument from the proof of Proposition \ref{p:ex_real}, noting that the Borel subgroups of $G_0$ have even order by Lemma \ref{l:borel} (recall that we are assuming $G_0 \ne {\rm L}_{2}(q)$). For the remainder, we may assume $r \ne p$. If we exclude the following two cases:
\begin{itemize}\addtolength{\itemsep}{0.2\baselineskip}
\item[{\rm (a)}] $G_0 = {\rm L}_{n}^{\e}(q)$ with $n \geqs 3$; and
\item[{\rm (b)}] $G_0 = {\rm P\O}_{n}^{\e}(q)$ with $n \equiv 2 \imod{4}$ and $n \geqs 10$,
\end{itemize}
then the Weyl group of $\bar{G}$ contains a central involution and by arguing as in the proof of Proposition \ref{p:ex_real} we see that every semisimple element in $\widetilde{G}$ is real. This leaves us to handle cases (a) and (b). If $x$ is non-regular then $|C_{G_0}(x)|$ is even, so we may assume $x$ is regular and is therefore contained in a unique maximal torus $T = \bar{T}_{\s}$ of $\widetilde{G}$, where $\bar{T}$ is a $\s$-stable maximal torus of $\bar{G}$. Let $W = N_{\bar{G}}(\bar{T})/\bar{T}$ be the corresponding Weyl group. As in the proof of Proposition \ref{p:ex_real}, let us assume $T$ corresponds to the $\s$-class of $s \in W$, in which case \eqref{e:ngt} holds with $T_0 = T \cap G_0$. Note that (i) holds if $|N_{G_0}(T)|$ is even, so we may assume $|N_{G_0}(T)|$ is odd. We consider cases (a) and (b) separately.

Suppose $G_0 = {\rm L}_{n}^{\e}(q)$ with $n \geqs 3$ and note that $W = S_n$. The conjugacy classes in $W$ are parameterised by partitions of $n$, so we may assume $s$ corresponds to the partition $\l = (n^{a_n}, \ldots, 2^{a_2},1^{a_1})$, where $a_{\ell}$ denotes the multiplicity of $\ell$ in the partition. Then 
\[
|T| = (q-\e)^{-1}\prod_{\ell=1}^{n} (q^{\ell}-\e^{\ell})^{a_{\ell}}, \;\; 
|C_W(s)| = \prod_{\ell=1}^n a_{\ell}!\ell^{a_{\ell}}
\]
and thus $|C_W(s)|$ is odd if and only if $\l$ consists of distinct odd parts. Since $x \in T$ is regular and has prime order, we deduce that either $n$ is odd and $\l = (n)$, or $n$ is even and $\l = (n-1,1)$. In other words, either $n$ is odd and $r$ is a primitive prime divisor of $q^{an}-1$, or $n$ is even and $r$ is a primitive prime divisor of $q^{a(n-1)}-1$, where $a = \frac{1}{2}(3-\e)$. In particular, (ii) holds if $n$ is odd. Similarly, (iii) holds if $n$ is even and $\e=-$, so we may assume $n$ is even and $\e=+$. Here $\dim C_V(x) = 1$, where $V$ is the natural module, and so we may embed $x$ in a Levi factor $L$ of a maximal parabolic subgroup $H = QL$ of $G_0$ (the stabiliser of a $1$-dimensional subspace of $V$), where the unipotent radical $Q$ is elementary abelian of order $q^{n-1}$ and $L$ is a subgroup of ${\rm GL}_{n-1}(q)$ of index $(n,q-1)$. Now $q$ is even since $|T|$ is odd and the subgroup $\la Q, x\ra$ of $H$ is soluble. In particular, $x$ is adjacent to every involution in $Q$. 

Finally, let us assume $G_0 = {\rm P\O}_{n}^{\e}(q)$ with $n \equiv 2 \imod{4}$ and $n \geqs 10$. Here $W$ is an index-two subgroup of $S_2 \wr S_{n/2}$ and we find that $|C_W(s)|$ is odd if and only if $s$ corresponds to an $\frac{n}{2}$-cycle in $S_{n/2}$. In this situation, $T$ is a cyclic torus of order $q^{n/2}-\e$ and $r$ is a primitive prime divisor of $q^{an/2}-1$, where $a = \frac{1}{2}(3-\e)$. In particular, $q$ is even. If $\e=+$ then we may embed $x$ in a Levi factor $L$ of a maximal parabolic subgroup $H = QL$ of $G_0$, where the unipotent radical $Q$ is elementary abelian of order $q^{n(n-2)/8}$ and $L = {\rm GL}_{n/2}(q)$ (here $H$ is the stabiliser of a maximal totally singular subspace of the natural module for $G_0$). Therefore, $x$ is contained in the soluble subgroup $\la Q, x \ra$ and thus $x$ is adjacent to every involution in $Q$. This leaves the case $\e=-$, which is recorded in part (iv).
\end{proof}

\begin{rem}
There are genuine exceptions that arise under the conditions recorded in parts (ii), (iii) and (iv) of Proposition \ref{p:class_real}. For example, if $G = {\rm L}_{3}(4)$ and $x \in G$ has order $7$, then $B_1(x)$ coincides with the nontrivial elements in $N_G(\la x \ra) = 7{:}3$ and thus $x$ is not adjacent to an involution. Similarly, if $G = {\rm U}_3(4)$ and $|x|=13$ then $B_1(x) \subseteq 13{:}3$ and the same conclusion holds. If $G = {\rm U}_6(2)$ and $|x|=11$ then $B_1(x) \subseteq 11{:}5$. Similarly, if $G = \O_{10}^{-}(2)$ and $|x|=11$ then $B_1(x) \subseteq 11{:}15$.
\end{rem}

We are now in a position to complete the proof of Theorem \ref{t:main3}.

\begin{cor}\label{c:class}
Let $G$ be an almost simple classical group with socle $G_0$. Then for all nontrivial $x \in G$, there exists an involution $y \in G$ with $\delta(x,y) \leqs 2$. In particular, $\delta_{\mathcal{S}}(G) \leqs 5$.
\end{cor}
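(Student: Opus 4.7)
The plan is to mirror the strategy used in the exceptional case of Corollary~\ref{c:ex}: given nontrivial $x \in G$, pass to a prime-order power $z = x^{|x|/r}$, apply Proposition~\ref{p:class_real} to $z$, and in the exceptional cases (ii)--(iv) build the required length-two path through a soluble overgroup of $\la z \ra$. The case $G_0 = {\rm L}_2(q)$ is handled by Proposition~\ref{p:psl20}, so I would assume $G_0 \ne {\rm L}_2(q)$ throughout.

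Given nontrivial $x \in G$, I would fix a prime divisor $r$ of $|x|$ and set $z = x^{|x|/r}$. Then $\la x, z \ra = \la x \ra$ is cyclic, so $x \sim z$, and it suffices to put $z$ within distance one of an involution. If Proposition~\ref{p:class_real}(i) applies to $z$ we are done; otherwise $z$ falls into one of cases (ii)--(iv), in which $z$ is regular semisimple of prime order and the proof of Proposition~\ref{p:class_real} identifies the cyclic maximal torus $T = C_{\widetilde{G}}(z)$ together with the identity $|N_{G_0}(T)| = |T_0| \cdot k$, where $T_0 = T \cap G_0$ and $k \in \{n,\, n-1,\, n/2\}$ in cases (ii), (iii), (iv), respectively.

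The crucial observation is that $k$ always admits a prime divisor $s$ that is \emph{not} one of the exceptional primitive prime divisors for $G_0$ appearing in (ii)--(iv). Indeed $k \geqs 3$ in every case (using $n \geqs 3$ odd in (ii), $n \geqs 4$ even in (iii), $n \geqs 10$ with $n/2$ odd in (iv)), so such an $s$ exists and satisfies $s \leqs k$. Since a primitive prime divisor of $q^m - 1$ is congruent to $1 \bmod m$ and hence exceeds $m$, while $s \leqs k \leqs m$ with $m \in \{an,\, 2(n-1),\, n\}$ in the three cases, we conclude that $s$ cannot be primitive of the requisite $q^m - 1$. As $G_0$ cannot simultaneously satisfy two of cases (ii)--(iv), Proposition~\ref{p:class_real} then forces every element of order $s$ in $G$ to be adjacent to an involution.

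To assemble the path, I would pick an element $w \in N_{G_0}(T)$ of order $s$ (which exists by Sylow) and note that $N_G(\la z \ra)$ is soluble: the centraliser $C_G(z) \supseteq T = C_{\widetilde{G}}(z)$ has soluble quotient $C_G(z)/T \hookrightarrow G/\widetilde{G}$, and $N_G(\la z \ra)/C_G(z)$ embeds into $\Aut(\la z \ra) \cong C_{r-1}$. Both $x$ (since it centralises $z$) and $w$ (via $N_{G_0}(T) \leqs N_G(\la z \ra)$, as $\la z \ra$ is the unique subgroup of order $r$ in the cyclic group $T_0$) lie in this soluble group, yielding $x \sim w$. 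Combined with $w \sim y$ for some involution $y$, this gives $\delta(x, y) \leqs 2$. The bound $\delta_{\mathcal{S}}(G) \leqs 5$ follows immediately, since involutions form a clique in $\Gamma_{\mathcal{S}}(G)$ (any two generate a dihedral, hence soluble, subgroup), so any two vertices are joined by a path of length at most $2+1+2 = 5$ through two involutions. The main obstacle is the verification that $k$ always contributes a non-exceptional prime $s$; beyond this size comparison, the solubility of $N_G(\la z \ra)$ and the placement of $x$ and $w$ within it are routine.
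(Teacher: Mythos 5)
Your argument is correct and follows essentially the same route as the paper's proof: reduce to a prime-order power $z$ of $x$, invoke Proposition~\ref{p:class_real}, and in the exceptional cases (ii)--(iv) use the soluble subgroup $N_G(\la z \ra) \geqs N_{G_0}(T_0) = T_0{:}k$ with $k = |C_W(s)|$ to connect $x$ to an element of small prime order $s \mid k$, which is then adjacent to an involution by a second application of the proposition since $s$ is too small to be the relevant primitive prime divisor. Your write-up just makes explicit a few points the paper leaves implicit (the solubility of $N_G(\la z \ra)$ and the clique of involutions yielding the bound of $5$).
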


\begin{proof}
Let $x \in G$ be nontrivial and assume $z = x^m$ has prime order $r$. In view of Theorem \ref{t:psl2} (and Proposition \ref{p:psl20}), we may assume that $(G_0,r)$ is one of the cases arising in parts (ii)-(iv) of Proposition \ref{p:class_real}. In each case, $z \in \widetilde{G}$ is a regular semisimple element and $C_{\widetilde{G}}(z) = T$ is a cyclic maximal torus of $\widetilde{G}$ of odd order. In particular, $q$ is even and \eqref{e:ngt} holds, where $T_0 = T \cap G_0$ and $|C_W(s)|$ is odd.

First consider case (ii) in Proposition \ref{p:class_real}, so $G_0 = {\rm L}_{n}^{\e}(q)$, $n$ is odd and $r$ is a primitive prime divisor of $q^{an}-1$, where $a = \frac{1}{2}(3-\e)$. Note that $r \geqs 2n+1$ and $|C_W(s)| = n$. Now $N_{G_0}(T_0) = T_0{:}n$ is a subgroup of $N_G(\la z \ra)$, which in turn is a soluble group containing $x$. Therefore, $x$ is adjacent to an element $z' \in G_0$ of order $r'$, where $r'$ is a prime divisor of $n$. Since $r' \leqs n$, Proposition \ref{p:class_real} implies that $z'$ is adjacent to an involution and we conclude that $x$ has distance at most $2$ from an involution, as required.

A very similar argument applies in cases (iii) and (iv), noting that $|C_W(s)| = n-1$ in (iii) and $|C_W(s)| = n/2$ in (iv). In both cases, we find that $x$ is adjacent to an element $z' \in G_0$ of order $r'$, where $r'$ is a prime divisor of $|C_W(s)|$, and the result follows.
\end{proof}

The following lemma shows that there exist finite simple classical groups with $\delta_{\mathcal{S}}(G) = 4$. 

\begin{lem}\label{l:l52}
If $G = {\rm L}_{5}^{\e}(2)$ then $\delta_{\mathcal{S}}(G) = 4$.
\end{lem}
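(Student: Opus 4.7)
The plan is to establish the two inequalities $\delta_{\mathcal{S}}(G) \geq 4$ and $\delta_{\mathcal{S}}(G) \leq 4$; the weaker upper bound $\delta_{\mathcal{S}}(G) \leq 5$ is already given by Corollary \ref{c:class}. Throughout, set $r = 31$ if $\e = +$ and $r = 11$ if $\e = -$, so that $r$ is a primitive prime divisor of $2^{5a}-1$ with $a = \frac{1}{2}(3-\e)$, and by Proposition \ref{p:class_real} no element of $G$ of order $r$ is adjacent in $\Gamma_{\mathcal{S}}(G)$ to any involution.

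For the lower bound, I would argue as in the proof of Lemma \ref{l:spor2}. Fix $x \in G$ of order $r$. Direct inspection of the maximal subgroups of $G$ shows that $x$ lies in a unique maximal subgroup, namely the Frobenius group $N_G(\langle x \rangle) = r{:}5$, which is soluble; hence $B_1(x)$ coincides with the set of nontrivial elements of $N_G(\langle x \rangle)$. Using random search in {\sc Magma}, I would then exhibit a conjugate $y = x^g$ such that $\langle a, b \rangle$ is insoluble for every $a \in B_1(x)$ and $b \in B_1(y)$; by Remark \ref{r:useful}, this forces $\delta(x, y) \geq 4$ and hence $\delta_{\mathcal{S}}(G) \geq 4$.

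For the upper bound, I would first verify from the character table of $G$ that every nontrivial $z \in G$ with $|z| \neq r$ has $|C_G(z)|$ even, and is therefore adjacent to an involution in $\Gamma_{\mathcal{S}}(G)$. For any two vertices $x, y \in \Gamma_{\mathcal{S}}(G)$ with at least one of $|x|, |y|$ different from $r$, the fact that the involutions of $G$ form a clique, together with Corollary \ref{c:class}, then yields $\delta(x, y) \leq 4$. This leaves the case $|x| = |y| = r$, which I would settle using the computational template of Example \ref{e:m24}: compute $B_2(x)$ explicitly, then loop over a set of orbit representatives for $C_G(x)$ acting on the conjugacy class of $x$, using random search to find $a \in B_2(x)$ and $b \in B_1(y)$ with $\langle a, b \rangle$ soluble, which gives $\delta(x, y) \leq 4$.

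The hard part will be the explicit ball computation and the exhaustive coverage of all conjugates of $x$ within distance $4$ in the final case. The groups $\mathrm{L}_5^{\e}(2)$ have order around $10^7$, so the ball $B_2(x)$ is already reasonably large; however, the orbit-reduction trick employed in Examples \ref{e:m23} and \ref{e:m24} keeps the random search manageable, and no ideas beyond those already developed in Section \ref{ss:comp} are needed.
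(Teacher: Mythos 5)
Your overall strategy coincides with the paper's: the lower bound is obtained exactly as you describe (random search for a conjugate $x^g$ with $\la a,b\ra$ insoluble for all $a \in B_1(x)$, $b \in B_1(x^g)$), and the upper bound is reduced to showing that any two elements of order $r$ lie at distance at most $4$, which is then settled via the template of Example \ref{e:m24}. However, there is a genuine gap in your reduction step for $\e=-$. You propose to verify that every nontrivial $z \in G$ with $|z| \ne r$ has $|C_G(z)|$ even; this is false in ${\rm U}_5(2)$, where the elements of order $9$ have centralisers of odd order (indeed $|N_G(\la z \ra)|$ is odd for $|z|=9$). These elements are therefore not covered by your argument, and the reduction to pairs of elements of order $11$ is incomplete. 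The paper closes this case by embedding an element of order $9$ in a soluble maximal subgroup of type ${\rm GU}_3(2) \times {\rm GU}_2(2)$ (the stabiliser of a nondegenerate $2$-space), which has even order, so that such an element is adjacent to an involution after all. Some additional argument of this kind is required before your final computation suffices.

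Two smaller inaccuracies, neither fatal. First, for $\e=-$ the subgroup $11{:}5$ is not a maximal subgroup of ${\rm U}_5(2)$: an element of order $11$ lies in a maximal subgroup ${\rm L}_2(11)$. The correct statement, which is what the paper uses and what your {\sc Magma} computation of soluble overgroups would return, is that $N_G(\la x \ra) = 11{:}5$ is the unique maximal \emph{soluble} overgroup of $x$, so $B_1(x)$ is still its set of nontrivial elements and the lower-bound argument goes through. Second, Proposition \ref{p:class_real} does not assert that elements of order $r$ fail to be adjacent to an involution; it merely fails to guarantee adjacency in that case. The non-adjacency is true here (it follows from $B_1(x)$ being the set of nontrivial elements of the odd-order group $r{:}5$), but it is not a consequence of the proposition and in any case is not needed for the proof.
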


\begin{proof}
We use {\sc Magma} to verify the result, working with the standard permutation representations of degree $31$ (for $\e=+$) and $165$ (for $\e=-$). First assume $\e=+$. Let $x \in G$ be an element of order $31$. Then $B_1(x)$ is the set of nontrivial elements in $H = N_G(\la x \ra) = 31{:}5$ and by random search we can find $g \in G$ such that $\la a, b\ra$ is insoluble for all $a \in B_1(x)$, $b \in B_1(x^g)$. This implies that $\delta(x,x^g) \geqs 4$ and thus $\delta_{\mathcal{S}}(G) \geqs 4$. An entirely similar argument applies when $\e=-$, noting that if $|x|=11$ then $B_1(x)$ is the set of nontrivial elements in $N_G(\la x \ra) = 11{:}5$.

To complete the proof, we need to show that $\delta_{\mathcal{S}}(G) \leqs 4$. First  assume $\e=+$ and observe that if $y \in G$ is nontrivial, then $|N_G(\la y\ra)|$ is even unless $|y|=31$. Therefore, if we fix an element $x \in G$ of order $31$, then it suffices to show that $\delta(x,y) \leqs 4$ for all $y \in Y$, where $Y$ is the set of elements of order $31$ in $G$. Here $|Y| = 1612800$, $|B_1(x)| = 154$ and using {\sc Magma} we compute $|B_2(x)| = 106484$. We now implement the process described in Example \ref{e:m24}, which allows us to conclude that $Y \subseteq B_4(x)$ as required. 

We can apply a very similar argument when $\e=-$. First we observe that $|N_G(\la x \ra)|$ is even unless $|x| \in \{9,11\}$. If $|x|=9$ then $x$ is contained in a soluble maximal subgroup of type ${\rm GU}_3(2) \times {\rm GU}_2(2)$ (the stabiliser of a $2$-dimensional nondegenerate subspace of the natural module for $G$) and we deduce that $x$ is adjacent to an involution in $\Gamma_{\mathcal{S}}(G)$. Therefore, it remains to show that any two elements of order $11$ are connected by a path of length at most $4$ and we proceed as above, following the procedure presented in Example \ref{e:m24}. Here we work with the set $Y$ of elements of order $11$ in $G$, noting that $|Y|=2488320$ and we have $|B_1(x)| = 54$ and $|B_2(x)| =220549$ for all $x \in Y$.
\end{proof}

We also record the following observation, which completes the proof of Theorem \ref{t:main2}.

\begin{lem}\label{l:l72}
If $G = {\rm L}_{7}^{\e}(2)$ then $\delta_{\mathcal{S}}(G) \in \{4,5\}$.
\end{lem}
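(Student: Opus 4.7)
The upper bound $\delta_{\mathcal{S}}(G) \leqslant 5$ is immediate from Corollary \ref{c:class}, so the task is to establish $\delta_{\mathcal{S}}(G) \geqslant 4$. I would adopt the computational strategy already used for ${\rm L}_5^\epsilon(2)$ in Lemma \ref{l:l52} and for $E_6(2)$ in Lemma \ref{l:e62}: fix an element $x$ of large prime order whose ball $B_1(x)$ is very small and explicitly described, then exhibit a conjugate $x^g$ with the property that $\langle a,b\rangle$ is insoluble for every $a \in B_1(x)$, $b \in B_1(x^g)$, so that Remark \ref{r:useful} gives $\delta(x,x^g) \geqslant 4$.

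For $\epsilon = +$ take $r = 127$, a primitive prime divisor of $2^7 - 1$, and fix $x \in G = {\rm L}_7(2)$ of order $r$. Then $\langle x \rangle$ is a Singer-type cyclic subgroup, and since $r > 7$ a standard Zsigmondy-type analysis of the maximal subgroups of ${\rm L}_7(2)$ shows that the only maximal subgroup of $G$ containing $\langle x \rangle$ is the normalizer $H := N_G(\langle x \rangle) = 127{:}7$; hence $B_1(x)$ coincides with $H \setminus \{1\}$. For $\epsilon = -$ take $r = 43$, a primitive prime divisor of $2^{14}-1$ (note $\Phi_{14}(2) = 43$), and fix $x \in G = {\rm U}_7(2)$ of order $r$. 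Here $\langle x \rangle$ is a twisted Singer-type subgroup, and one checks analogously that $H := N_G(\langle x \rangle) = 43{:}14$ is the unique maximal overgroup of $\langle x\rangle$ in $G$, so $B_1(x) = H \setminus \{1\}$.

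With $B_1(x)$ thus explicit and $B_1(x^g) = B_1(x)^g$ equally cheap to produce for any given $g$, the plan is to run {\sc Magma} in a convenient representation of $G$ (for instance the natural permutation representation of ${\rm L}_7(2)$ of degree $127$, or a low-degree action of ${\rm U}_7(2)$), repeatedly sampling $g \in G$ at random and testing for every pair $(a,b) \in B_1(x) \times B_1(x^g)$ whether $\langle a,b\rangle$ is insoluble. A single successful $g$ suffices to force $\delta_{\mathcal{S}}(G) \geqslant 4$.

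The main obstacle is computational rather than conceptual. One needs to verify rigorously that $\langle x \rangle$ has a unique maximal overgroup in $G$, so that $B_1(x)$ really is confined to $N_G(\langle x \rangle)$, and one needs a representation small enough that constructing and testing many candidates $g$ is feasible. This is also precisely why the statement only brackets the diameter in $\{4,5\}$ rather than pinning it down: proving the matching upper bound $\delta_{\mathcal{S}}(G) \leqslant 4$ would require exhaustive control of $B_3(x)$ in the spirit of Example \ref{e:m23}, which is beyond the reach of direct computation for groups of this size.
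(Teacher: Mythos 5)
Your strategy is exactly the paper's: the upper bound comes from Corollary \ref{c:class}, and the lower bound is obtained by fixing $x$ of order $127$ (resp.\ $43$), observing that $B_1(x)$ is the set of nontrivial elements of $N_G(\la x \ra)$, and using random search to find $g$ with $\la a,b\ra$ insoluble for all $a \in B_1(x)$, $b \in B_1(x^g)$.

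There is, however, one concrete error in the unitary case: you assert $N_G(\la x \ra) = 43{:}14$ for $|x|=43$ in ${\rm U}_7(2)$, presumably because $2$ has multiplicative order $14$ modulo $43$. The correct structure is $43{:}7$. Here $\la x \ra = T_0$ is a cyclic maximal torus (of order $(2^7+1)/(2+1) = 43$, with $d = (7,2+1)=1$), and by \eqref{e:ngt} the relative Weyl group $N_{G_0}(T_0)/T_0$ has order $|C_W(s)| = 7$, where $s$ is a $7$-cycle in $W = S_7$ --- not the full order-$14$ subgroup of ${\rm Aut}(C_{43})$ generated by $2$. This is not a cosmetic slip: if $N_G(\la x \ra)$ really had order $43\cdot 14$, it would contain involutions, so $B_1(x)$ and $B_1(x^g)$ would each contain an involution, and since any two involutions generate a dihedral (hence soluble) group we would get $\delta(x,x^g) \leqs 3$ for \emph{every} $g$. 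The random search you propose could then never succeed. The oddness of $|N_G(\la x \ra)| = 301$ is precisely what leaves the door open for $\delta(x,x^g) \geqs 4$. With this correction the argument goes through as in the paper.
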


\begin{proof}
By Corollary \ref{c:class} we have $\delta_{\mathcal{S}}(G) \leqs 5$. 
First assume $\e=+$ and let $x \in G$ be an element of order $127$. Then $B_1(x)$ is the set of nontrivial elements in $N_G(\la x \ra) = 127{:}7$ and by random search we can find an element $g \in G$ such that $\la a,b \ra$ is insoluble for all $a \in B_1(x)$, $b \in B_1(x^g)$. This implies that $\delta(x,x^g) \geqs 4$. An entirely similar argument applies when $\e=-$, noting that if $|x|=43$ then $B_1(x)$ is the set of nontrivial elements in $N_G(\la x \ra) = 43{:}7$.
\end{proof}

We close this section by considering the problem highlighted in Remark \ref{r:1} in Section \ref{s:intro}. Let $G$ be a non-abelian finite simple group and recall that we have shown that $\delta_{\mathcal{S}}(G) \geqs 3$ if $G$ is not isomorphic to a classical group (this is part (ii)(d) in Theorem \ref{t:main2}). For classical groups, we have  observed that $\delta_{\mathcal{S}}(G) = 2$ if $G = {\rm L}_{2}(q)$ with $q$ even (or $q=7$) or if $G \in \{{\rm L}_{3}(2), {\rm U}_4(2)\}$. We are not aware of any additional simple groups with $\delta_{\mathcal{S}}(G) = 2$, but a complete classification remains out of reach. 

In the next result, we classify the linear groups with $\delta_{\mathcal{S}}(G) = 2$.

\begin{prop}\label{p:class_new}
Suppose $G = {\rm L}_{n}(q)$ is a simple group. Then $\delta_{\mathcal{S}}(G) = 2$ if and only if $n=2$ and $q$ is even or $q \in \{5,7\}$, or $(n,q) = (3,2)$.   
\end{prop}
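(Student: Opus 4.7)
The ``if'' direction is immediate from results already available: Theorem \ref{t:psl2} covers $n=2$ with $q$ even or $q \in \{5,7\}$ (recalling that $\delta_{\mathcal{S}}({\rm L}_2(5)) = \delta_{\mathcal{S}}(A_5) = 2$ via \eqref{e:a56}), and the remaining case $(n,q) = (3,2)$ follows from the exceptional isomorphism ${\rm L}_3(2) \cong {\rm L}_2(7)$. So the real content is to prove that $\delta_{\mathcal{S}}(G) \geqs 3$ for every simple ${\rm L}_n(q)$ with $n \geqs 3$ and $(n,q) \ne (3,2)$.

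The plan is to apply Lemma \ref{l:base}: I will exhibit, for each such $G$, a nontrivial element $x$ of prime order $r$ together with a core-free soluble maximal subgroup $H$ of $G$ with the property that $H$ is the unique maximal subgroup of $G$ containing $x$ and $b(G,H) = 2$. This automatically gives $B_1(x) \subseteq H$ and a conjugate $x^g$ with $B_1(x) \cap B_1(x^g) = \emptyset$, so $\delta(x,x^g) \geqs 3$.

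The main case is $n$ odd, $n \geqs 3$: by Zsygmondy's theorem, since $(n,q) \ne (3,2)$ there is a primitive prime divisor $r$ of $q^n-1$, and I pick $x$ of order $r$. Such an $x$ lies in a Singer torus $T$ of order $(q^n-1)/(q-1)$ inside $G$, and the analysis underlying Proposition \ref{p:class_real}(ii), together with the classical description of overgroups of ppd-elements (e.g.\ Guralnick--Penttila--Praeger--Saxl, or the argument of Bereczky), shows that the Singer normaliser $H = N_G(T)$, of structure $(q^n-1)/((q-1)(n,q-1)){.}n$, is the unique maximal overgroup of $x$ in $G$. The group $H$ is visibly soluble, and the base size $b(G,H)=2$ follows from the main theorem of \cite{BOW}. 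For $n$ even, $n \geqs 4$, I choose $r$ to be a primitive prime divisor of $q^{n-1}-1$; then $x$ stabilises a unique $1$-space $U$ and a unique hyperplane $W$ of the natural module, and its only maximal overgroups in $G$ are the two parabolics $P_U$ and $P_W$. These parabolics are insoluble for $n \geqs 3$, so every soluble overgroup of $x$ sits in $P_U \cap P_W$, and in that intersection the unique maximal soluble subgroup containing $x$ is the extension of a Singer-like normaliser in the Levi ${\rm GL}_{n-1}(q)$-factor; this is the subgroup $H$ that I use, and once again $b(G,H) = 2$ from \cite{BOW}.

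The main obstacle is the bookkeeping of small exceptional configurations where the ``large ppd'' maximal-subgroup classification is subtle or fails outright --- essentially the handful of small pairs $(n,q)$ where a ppd of $q^n-1$ or $q^{n-1}-1$ does not exist in the required range, or where $x$ lies in an extra maximal subgroup of non-generic type. For $n \geqs 3$ and $(n,q) \ne (3,2)$ there are only finitely many such pairs, and they can be handled computationally exactly as in the proof of Proposition \ref{p:small} by direct calculation in {\sc Magma}, yielding $\delta_{\mathcal{S}}(G) = 3$ case by case. Combining the uniform ppd argument with these finitely many explicit verifications completes the ``only if'' direction and hence the proposition.
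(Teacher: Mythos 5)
Your ``if'' direction is fine, but both halves of your ``only if'' argument have genuine gaps. The more serious one is the even-$n$ case: from the fact that $P_U$ and $P_W$ are the only \emph{maximal} overgroups of $x$ it does not follow that every soluble overgroup of $x$ lies in $P_U \cap P_W$ --- such a subgroup lies in $P_U$ \emph{or} in $P_W$, not necessarily in both. Concretely, if $Q_U$ is the unipotent radical of $P_U$ then $Q_U{:}\la x \ra$ is a soluble overgroup of $x$ contained in $P_U$ but not in $P_W$, so $B_1(x)$ contains all of $Q_U^{\#}$ and $Q_W^{\#}$ and is vastly larger than the Singer-like normaliser you propose as $H$; the distance-$3$ conclusion then does not follow from $b(G,H)=2$. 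In the odd case, your claim that the Singer normaliser $N_G(T)$ is the unique maximal overgroup of $x$, with only finitely many exceptional pairs $(n,q)$, is not correct: for every composite odd $n$ the subgroup $N_G(T)$ is not even maximal (it sits inside field-extension subgroups of type ${\rm GL}_{n/b}(q^b).b$), and the nearly simple examples in the Guralnick--Penttila--Praeger--Saxl classification (e.g.\ ${\rm M}_{11} < {\rm L}_5(3)$, or ${\rm L}_2(2n+1) < {\rm L}_n(q)$ whenever the relevant $n$-dimensional representation exists) occur for infinitely many $(n,q)$, so the ``handle finitely many exceptions in {\sc Magma}'' fallback cannot close the gap. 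What \emph{is} true, and what would repair the odd case, is that $N_G(T)$ is the unique maximal \emph{soluble} overgroup of a Singer element (this follows from Suprunenko's theory of maximal soluble linear groups), which still yields $B_1(x) \subseteq N_G(T)$ even though Lemma \ref{l:base} as stated does not literally apply. Finally, your base-size citation is wrong in both cases: \cite{BOW} computes base sizes for \emph{sporadic} simple groups; the relevant results for $b(G,N_G(T))=2$ here are \cite{BT} (or \cite{B20}).

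For comparison, the paper's proof proceeds quite differently. For odd $n \geqs 3$ (excluding $(3,2)$ and $(3,4)$, which are handled directly) it invokes \cite[Theorem 6(ii)]{BH2}: the uniform domination number of $G$ equals $2$, i.e.\ there are conjugates $x_1,x_2$ such that every nontrivial $y$ generates $G$ with $x_1$ or with $x_2$, which immediately forces $\delta(x_1,x_2) \geqs 3$ with no maximal-overgroup analysis at all. For even $n \geqs 4$ it takes $x$ to be a Singer element of the full group (not a ppd element of $q^{n-1}-1$), uses Suprunenko to see that $H = N_G(\la x \ra) = T.n$ is the unique maximal soluble overgroup, and concludes via $b(G,H)=2$ from \cite{BT}. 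If you want to salvage your approach, the cleanest fix is to adopt the Singer-element-plus-Suprunenko argument uniformly for all $n \geqs 3$, which avoids both the overgroup classification and the parabolic intersection entirely.
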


\begin{proof}
For $n=2$ we refer the reader to Theorem \ref{t:psl2}. Next assume $n \geqs 3$ is odd. If $(n,q) = (3,2)$ then $G \cong {\rm L}_{2}(7)$ and $\delta_{\mathcal{S}}(G) = 2$ by Theorem \ref{t:psl2}. Similarly, the proof of Proposition \ref{p:small} yields $\delta_{\mathcal{S}}(G) = 3$ if $(n,q) = (3,4)$. In each of the remaining cases, \cite[Theorem 6(ii)]{BH2} states that the uniform domination number of $G$ is equal to $2$ and we deduce that $\delta_{\mathcal{S}}(G) \geqs 3$ as in the proof of Proposition \ref{p:exdiam}.

Now assume $n \geqs 4$ is even. Let $x \in G$ be a Singer element, so $T = \la x \ra$ is a maximal torus of $G$ of order $(q^n-1)/d(q-1)$, where $d = (n,q-1)$. Then $x$ is contained in a unique maximal soluble subgroup of $G$, namely $H = N_G(T) = T.n$, and thus $B_1(x)$ coincides with the set of nontrivial elements in $H$ (for example, this is an easy consequence of Suprunenko's structure theory of primitive maximal soluble subgroups of linear groups, see \cite{Sup}). By \cite{BT} we have $b(G,H) = 2$ and thus 
$\delta_{\mathcal{S}}(G) \geqs 3$ via Lemma \ref{l:base}.
\end{proof}

We can also easily eliminate odd-dimensional unitary groups.

\begin{prop}\label{p:class_new2}
Let $G = {\rm U}_{n}(q)$, where $n \geqs 3$ is odd. Then $\delta_{\mathcal{S}}(G) \geqs 3$. 
\end{prop}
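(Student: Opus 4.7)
The plan is to mirror the proof of Proposition \ref{p:class_new} in the case of even $n$, by exhibiting a Singer-type element $x \in G$ whose unique maximal soluble overgroup $H$ satisfies $b(G,H) = 2$; the conclusion $\delta_{\mathcal{S}}(G) \geqs 3$ then follows from (the proof of) Lemma \ref{l:base}.

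Specifically, by Zsigmondy's theorem there is a primitive prime divisor $r$ of $q^{2n}-1$, the sole exception $(q,2n) = (2,6)$ being excluded since ${\rm U}_3(2)$ is not simple. Fix $x \in G$ of order $r$. Following the analysis in the proof of Proposition \ref{p:class_real}(ii) with $\e = -$, the centraliser $T = C_{\widetilde{G}}(x)$ is a cyclic maximal torus of $G$ whose normaliser $H := N_G(T) = T{.}n$ is soluble.

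The main step is to verify that $H$ is the unique maximal soluble subgroup of $G$ containing $x$. Since $r \geqs 2n+1 > n$, the element $x$ acts irreducibly on the natural $n$-dimensional $\mathbb{F}_{q^2}$-module $V$ for ${\rm SU}_n(q)$; indeed, $r$ does not divide $|{\rm GL}_m(q^2)|$ for any $m < n$. Hence the preimage in ${\rm SU}_n(q)$ of any soluble overgroup of $x$ in $G$ is an irreducible soluble subgroup of ${\rm GL}(V)$, which by Suprunenko's structure theory of primitive maximal soluble subgroups of finite linear groups (see \cite{Sup}) normalises an extension field structure on $V$. The presence of $x$ of order $r$ forces this structure to be the full field $\mathbb{F}_{q^{2n}}$, whence every such overgroup lies in $N_G(T) = H$.

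Granted this, \cite{BT} supplies $b(G,H) = 2$, so $H \cap H^g = 1$ for some $g \in G$. As in the proof of Lemma \ref{l:base}, but applied with ``unique maximal subgroup'' replaced by ``unique maximal soluble subgroup'' (which still guarantees $B_1(x) \subseteq H$, because any element of $G \setminus H$ together with $x$ generates a subgroup not contained in any maximal soluble subgroup of $G$ and hence insoluble), we deduce $B_1(x) \cap B_1(x^g) = \emptyset$ and hence $\delta(x, x^g) \geqs 3$. The main obstacle is the Suprunenko step, that is, ruling out exotic maximal soluble overgroups of $x$ beyond the Singer normaliser; I expect this to follow cleanly from the irreducibility of $\langle x \rangle$ and the standard structure theory of soluble linear groups, with no additional small-case analysis required.
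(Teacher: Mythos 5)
Your route is genuinely different from the paper's. The paper disposes of $(n,q)=(3,3)$ by the computation behind Proposition \ref{p:small}, checks $(n,q)=(3,5)$ with {\sc Magma}, and otherwise quotes \cite[Theorem 6(ii)]{BH2}: the uniform domination number of $G$ is $2$, so there are conjugate elements $x_1,x_2$ such that every nontrivial $y$ generates $G$ with one of them, whence no $y$ can satisfy $x_1 \sim y \sim x_2$ and $\delta(x_1,x_2) \geqs 3$ (the argument of Proposition \ref{p:exdiam}). Your Singer-element and base-size argument instead mirrors the even-dimensional case of Proposition \ref{p:class_new}, and the overall strategy is viable, but it has a concrete gap at the step you yourself flag. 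Suprunenko's primitive maximal soluble subgroups of ${\rm GL}(V)$ are \emph{not} all normalisers of extension-field structures: when $n=s^m$ is a prime power they also include normalisers of symplectic-type $s$-groups (the $\mathcal{C}_6$-type subgroups, of shape $(q^2-1)\circ s^{1+2m}.{\rm Sp}_{2m}(s)$ in ${\rm GL}_n(q^2)$), and these must be excluded before you can conclude that every soluble overgroup of $x$ lies in $N_G(T)$. The exclusion is true but needs an argument: every prime divisor of $|{\rm Sp}_{2m}(s)|$ is at most $s^m+1=n+1$, and $s\leqs n$, whereas $r\equiv 1 \imod{2n}$ forces $r\geqs 2n+1$, and $r$ does not divide $q^2-1$. (Your implicit dismissal of the imprimitive case is fine, since $r>n$ forces $x$ to stabilise every block of an invariant decomposition, contradicting irreducibility.)

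Two further cautions. Irreducibility of $\la x\ra$ on $V$ holds because the multiplicative order of $q^2$ modulo $r$ is exactly $n$; the statement that $r$ does not divide $|{\rm GL}_m(q^2)|$ for $m<n$ is a consequence of this, not a reason for it. More importantly, your assertion that no small-case analysis is required should not be taken on faith: the appeal to \cite{BT} for $b(G,H)=2$ must be checked against any exceptions recorded there for small $n$ and $q$, and the fact that the paper's own proof isolates $(n,q)=(3,3)$ and $(3,5)$ for separate computational treatment is a warning that uniform statements of this kind can fail at the margins. With the $\mathcal{C}_6$ case added and the small cases verified, your proof goes through, but the paper's argument is shorter because the hard work is already packaged in the uniform domination number.
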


\begin{proof}
If $(n,q) = (3,3)$ then $\delta_{\mathcal{S}}(G) = 3$ by the proof of Proposition \ref{p:small}. Similarly, the case $(n,q) = (3,5)$ can be checked using {\sc Magma}. In each of the remaining cases, the uniform domination number of $G$ is equal to $2$ (see \cite[Theorem 6(ii)]{BH2}) and the result follows as in the proof of the previous proposition.
\end{proof}

\section{Some related graphs}\label{s:gen}

Recall that the soluble graph of a finite group $G$ is a natural generalisation of the widely studied commuting graph. Indeed, the soluble graph encodes the pairs of elements that generate a soluble subgroup of $G$, whereas the commuting graph is concerned with the pairs generating an abelian group. Of course, there are many natural families of groups that lie between soluble and abelian, including the supersoluble, nilpotent, metabelian and metacyclic groups. For each of these families, by suitably modifying the definition of the soluble graph, we can construct a graph associated to $G$.

Let $\mathcal{F}$ be such a family of groups and consider the graph $\Lambda_{\mathcal{F}}(G)$ on the elements of $G$, where distinct vertices $x$ and $y$ are adjacent if and only if $\la x,y \ra$ is in $\mathcal{F}$. As before, we would like to define the related graph $\Gamma_{\mathcal{F}}(G)$ on $G \setminus I_{\mathcal{F}}(G)$, where $I_{\mathcal{F}}(G)$ is the set of isolated vertices in the complement of $\Lambda_{\mathcal{F}}(G)$. Recall that if $\mathcal{F} = \mathcal{A}$ is the class of abelian groups, then $I_{\mathcal{F}}(G) = Z(G)$ and $\Gamma_{\mathcal{F}}(G)$ is the commuting graph. Similarly, if $\mathcal{F} = \mathcal{S}$ is the class of soluble  groups, then \cite[Theorem 1.1]{gu} implies that $I_{\mathcal{F}}(G) = R(G)$ is the soluble radical and $\Gamma_{\mathcal{F}}(G)$ is the soluble graph of $G$. Notice that in both of these special cases, $I_{\mathcal{F}}(G)$ is a normal subgroup of $G$. In addition, if $\mathcal{F} = \mathcal{S}$, then $\Gamma_{\mathcal{F}}(G)$ is connected if and only if $\Gamma_{\mathcal{F}}(G/I_{\mathcal{F}}(G))$ is connected and moreover, the diameters of these two graphs are equal (see Lemma \ref{l:rad}). 

These attractive properties provide further impetus for studying the soluble graph of a finite group, compared with some of the other possibilities for $\mathcal{F}$ mentioned above. Indeed, various difficulties arise when we switch our focus to one of the other families. 

For example, suppose $\mathcal{F} = \mathcal{M}$ is the class of metabelian groups. Here it is difficult to give an efficient description of the isolated vertices $I_{\mathcal{F}}(G)$, which need not even be a subgroup of $G$ (note that $I_{\mathcal{F}}(G)$ is always a normal subset of $G$).

\begin{ex}\label{e:metabelian}
Let $p$ be a prime number and let $G$ be a Sylow $p$-subgroup of ${\rm GL}_{n}(p)$, where $n \geqs 2$. Notice that $G$ is generated by its abelian normal subgroups. In addition, note that if $N$ is an abelian normal subgroup of $G$, then $\la x,y \ra$ is metabelian for all $x \in N$ and $y \in G$. Therefore, $I_{\mathcal{M}}(G)$ is a subgroup of $G$ if and only if $I_{\mathcal{M}}(G) = G$. However, if $n$ is large enough, then $G$ contains $2$-generated subgroups that are not metabelian and thus $I_{\mathcal{M}}(G)$ is not a subgroup (indeed, any given $p$-group embeds in ${\rm GL}_{n}(p)$ for some $n$).
\end{ex}

Let us assume $I_{\mathcal{F}}(G)$ is a subgroup of $G$. The next problem is to determine whether or not the connectivity of $\Gamma_{\mathcal{F}}(G/I_{\mathcal{F}}(G))$ implies the connectivity of $\Gamma_{\mathcal{F}}(G)$. Working with the family 
$\mathcal{M}$ of metabelian groups, we present the following example. Here    $I_{\mathcal{M}}(G)$ is a subgroup of $G$ and $\Gamma_{\mathcal{M}}(G/I_{\mathcal{M}}(G))$ is connected, but $\Gamma_{\mathcal{M}}(G)$ is disconnected.

\begin{ex}\label{ex:meta}
Let $G={\rm SL}_2(3) = Q_8{:}C_3$ and observe that $I_{\mathcal{M}}(G)=Z(G) = C_2$. Then $G/I_{\mathcal{M}}(G) \cong A_4$ is metabelian and $\Gamma_{\mathcal{M}}(G/I_{\mathcal{M}}(G))$ is the null graph (on zero vertices). However $\Gamma_{\mathcal{M}}(G)$ has $22$ vertices and $5$ connected components:  one comprising the $6$ elements of order $4$ and the remainder corresponding to the $4$ elements of order $3$ or $6$ in each of the four cyclic subgroups of $G$ with order $6$.
\end{ex}

Although the previous example shows that $\Gamma_{\mathcal{M}}(G)$ is not connected, in general, we can establish the following result as an easy consequence of  Theorem \ref{t:main1}.

\begin{prop}\label{p:meta}
Let $G$ be a nontrivial finite group with $R(G)=1$. Then the metabelian graph $\Gamma_{\mathcal{M}}(G)$ is connected and its diameter is at most $2 \delta_{\mathcal{S}}(G)$.
\end{prop}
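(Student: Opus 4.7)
My plan is to lift a geodesic path in $\Gamma_{\mathcal{S}}(G)$ to a walk in $\Gamma_{\mathcal{M}}(G)$ by inserting one auxiliary vertex per edge, using the fact that any two-generated soluble subgroup of $G$ has a nontrivial abelian normal subgroup.

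I would first identify the vertex set of $\Gamma_{\mathcal{M}}(G)$. Since $R(G)=1$, the theorem of Guralnick et al.\ \cite[Theorem 1.1]{gu} already invoked in the introduction tells us that every nontrivial $a \in G$ lies in some insoluble $2$-generated subgroup; such a subgroup is certainly not metabelian, so $I_{\mathcal{M}}(G) = \{1\}$ and the vertex sets of $\Gamma_{\mathcal{S}}(G)$ and $\Gamma_{\mathcal{M}}(G)$ coincide with $G \setminus \{1\}$. Hence every element we will want to use as an intermediate vertex is automatically admissible.

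The core step is an edge-doubling lemma. Suppose $u,v \in G$ are adjacent in $\Gamma_{\mathcal{S}}(G)$, so $H = \langle u,v\rangle$ is a soluble (hence nontrivial) subgroup of $G$. Let $A$ be the last nontrivial term of the derived series of $H$; then $A$ is a nontrivial abelian characteristic subgroup of $H$. I claim that for any nontrivial $a \in A$, both $\langle u,a\rangle$ and $\langle v,a\rangle$ are metabelian. Indeed, the normal closure $N$ of $\langle a\rangle$ in $\langle u,a\rangle$ is generated by the $\langle u\rangle$-conjugates of $a$, all of which lie in the abelian group $A$; so $N$ is abelian, and $\langle u,a\rangle/N$ is cyclic (generated by the image of $u$). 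The same argument works for $\langle v,a\rangle$. Thus $u \sim a \sim v$ in $\Gamma_{\mathcal{M}}(G)$.

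Combining the two steps gives the proposition. Given nontrivial $x,y \in G$, Theorem \ref{t:main1} furnishes a path $x = x_0, x_1, \ldots, x_k = y$ in $\Gamma_{\mathcal{S}}(G)$ with $k \leqs \delta_{\mathcal{S}}(G)$; applying the edge-doubling lemma to each edge $\{x_{i-1},x_i\}$ yields nontrivial elements $a_1, \ldots, a_k$ such that $x_{i-1} \sim a_i \sim x_i$ in $\Gamma_{\mathcal{M}}(G)$, and concatenation gives a walk of length at most $2k \leqs 2\delta_{\mathcal{S}}(G)$ between $x$ and $y$. I do not anticipate any substantive obstacle: the only possible worry is that an inserted $a_i$ could coincide with the identity, but this is ruled out by choosing $a_i \in A \setminus \{1\}$, which is possible since $A$ is nontrivial.
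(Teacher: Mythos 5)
Your proof is correct and follows essentially the same route as the paper: both arguments split each edge $x \sim y$ of a geodesic in $\Gamma_{\mathcal{S}}(G)$ into a path of length $2$ in $\Gamma_{\mathcal{M}}(G)$ through a nontrivial element of an abelian normal subgroup of $\langle x,y\rangle$ (the paper takes a minimal normal subgroup, you take the last nontrivial derived term, and the metabelian verification is identical). Your extra observation that $I_{\mathcal{M}}(G)=\{1\}$, so the two graphs share a vertex set, is a welcome point of care that the paper leaves implicit.
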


\begin{proof}
Let $x$ and $y$ be two adjacent vertices in the soluble graph $\Gamma_{\mathcal{S}}(G).$ If $N$ is a minimal normal subgroup of
$\langle x, y \rangle$ and $1\neq n \in N,$ then $\langle x, n\rangle$ and $\langle y, n\rangle$ are metabelian, so $x$ and $y$ have distance at most $2$ in $\Gamma_{\mathcal{M}}(G)$. The result now follows, noting that $\Gamma_{\mathcal{S}}(G)$ is connected by Theorem \ref{t:main1}.
\end{proof}

A natural question that arises at this point is the following.

\begin{prob}\label{p:conn}
For which families $\mathcal{F}$ of finite soluble groups is it true that $\Gamma_{\mathcal{F}}(G)$ is connected for every finite group $G\not\in \mathcal{F}$? 
\end{prob}

Notice that the nilpotent and supersoluble graphs of $G = A_4$ are equal and disconnected. Indeed this graph has $11$ vertices and $5$ connected components: one comprising the $3$ involutions, and four more consisting of an element of order $3$ and its inverse. So the answer to Problem \ref{p:conn} is negative if $\mathcal{F}$ is the family of nilpotent groups or the family of supersoluble groups. We expect it would be interesting to consider the same question for some larger families of groups, such as those with derived nilpotent subgroup, or with Fitting length at most $2$.

Even when the answer to Problem \ref{p:conn} is negative, it would be interesting to investigate the following two questions.

\begin{prob}
Let $\mathcal{F}$ be a family of finite groups and define $I_{\mathcal{F}}(G)$ and $\Gamma_{\mathcal{F}}(G)$ as above.
\begin{itemize}\addtolength{\itemsep}{0.2\baselineskip}
\item[{\rm (i)}] Does there exist an absolute constant $c$ such that if $G$ is a finite  group, then every connected component of $\Gamma_{\mathcal{F}}(G)$ has diameter at most $c$?
\item[{\rm (ii)}] Is there a positive answer to (i) if we only consider groups with 
$I_{\mathcal{F}}(G)=1$?
\end{itemize}
\end{prob}

Notice that if $\mathcal F=\mathcal{A}$ is the family of abelian groups, then $I_{\mathcal{F}}(G) = Z(G)$, $\Gamma_{\mathcal{F}}(G)$ is the commuting graph and the two previous questions have different answers. Indeed, as we recalled in Section \ref{s:intro}, if $Z(G)=1,$ then each connected component of the commuting graph of a non-abelian finite group $G$ has diameter at most $10$ (this is a theorem of Morgan and Parker \cite{MP}). However, Giudici and Parker \cite{GP} have constructed an  infinite sequence $(G_n)$ of finite $2$-groups such that the diameter of the corresponding commuting graphs tends to infinity. 

If we take $\mathcal{F} = \mathcal{N}$ to be the family of nilpotent groups, then the following stronger result holds.
  
\begin{prop}
Let $G$ be a finite non-nilpotent group. Then each connected component of the nilpotent graph $\Gamma_{\mathcal{N}}(G)$ of $G$ has diameter at most $10$.
\end{prop}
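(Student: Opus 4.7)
My plan is to reduce the problem to the commuting graph of the quotient $\bar G=G/Z_\infty(G)$ (where $Z_\infty(G)$ denotes the hypercenter of $G$) and then invoke the Morgan-Parker theorem \cite{MP}. As a preliminary I would record the identification $I_{\mathcal{N}}(G)=Z_\infty(G)$: an element $x\in G$ satisfies $\langle x,g\rangle$ nilpotent for every $g\in G$ if and only if $x$ lies in some term of the upper central series. This is a standard consequence of the Engel-element characterisation of the hypercenter in finite groups due to Baer and Gruenberg, and I would cite it; the same circle of ideas also delivers the hereditary statement $Z_\infty(G)\cap H\leqs Z_\infty(H)$ that is needed below. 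Because $G$ is non-nilpotent we have $Z_\infty(G)\neq G$, so $\bar G$ is nontrivial with $Z(\bar G)=1$; in particular $\bar G$ is non-abelian so that Morgan-Parker applies.

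Two mutually reverse observations then do the work. First, if $\bar u,\bar v\in\bar G\setminus\{1\}$ commute, then any lifts $u,v\in G$ satisfy $[u,v]\in Z_\infty(G)$, hence $\langle u,v\rangle'\leqs Z_\infty(G)\cap\langle u,v\rangle\leqs Z_\infty(\langle u,v\rangle)$, so that $\langle u,v\rangle/Z_\infty(\langle u,v\rangle)$ is abelian. But the quotient of any finite group by its hypercenter has trivial centre, so this abelian quotient must be trivial and $\langle u,v\rangle$ is nilpotent. Since $u,v\notin Z_\infty(G)=I_{\mathcal{N}}(G)$, this says $u\sim v$ in $\Gamma_{\mathcal{N}}(G)$. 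Second, every edge $x\sim y$ in $\Gamma_{\mathcal{N}}(G)$ yields a length-$2$ connection in $\Gamma_{\mathcal{A}}(\bar G)$: the image $\langle\bar x,\bar y\rangle$ is a nontrivial nilpotent subgroup of $\bar G$ (nontrivial because $\bar x,\bar y\neq 1$), hence has a nontrivial centre, any nontrivial element of which commutes in $\bar G$ with both $\bar x$ and $\bar y$.

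Now let $x,y$ lie in the same component of $\Gamma_{\mathcal{N}}(G)$. If $\bar x=\bar y$ then $y=xz$ for some $z\in Z_\infty(G)\subseteq I_{\mathcal{N}}(G)$, and $\langle x,y\rangle=\langle x,z\rangle$ is nilpotent, giving $d(x,y)=1$. Otherwise, iterating the second observation along a nilpotent path from $x$ to $y$ exhibits a walk in $\Gamma_{\mathcal{A}}(\bar G)$ joining $\bar x$ to $\bar y$, and Morgan-Parker then gives $d_{\Gamma_{\mathcal{A}}(\bar G)}(\bar x,\bar y)\leqs 10$. Lifting this geodesic via the first observation produces a path of length at most $10$ in $\Gamma_{\mathcal{N}}(G)$ joining $x$ to $y$, as required.

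The main obstacle I anticipate is setting up the two auxiliary facts about the hypercenter, namely $I_{\mathcal{N}}(G)=Z_\infty(G)$ and $Z_\infty(G)\cap H\leqs Z_\infty(H)$; both ultimately reduce to the Engel characterisation of hypercentrality in finite groups, which should be cited rather than reproved, after which the rest of the argument is essentially formal.
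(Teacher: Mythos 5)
Your proof is correct and follows essentially the same route as the paper: identify $I_{\mathcal{N}}(G)$ with the hypercentre, relate $\Gamma_{\mathcal{N}}(G)$ to the commuting graph of $G/Z_\infty(G)$ via the two observations that commuting images lift to nilpotent two-generated subgroups and that a nilpotent edge gives distance at most $2$ in the commuting graph through a central element, and then invoke Morgan--Parker. The only (cosmetic) difference is that the paper first reduces to the case $Z_\infty(G)=1$ by noting the diameter-preserving correspondence of components, whereas you lift paths between $G$ and $\bar G$ directly, making explicit the hereditary property $Z_\infty(G)\cap H\leqs Z_\infty(H)$ that the paper leaves implicit.
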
  

\begin{proof}
First observe that $I_{\mathcal{N}}(G)$ coincides with the hypercentre of $G$, denoted $Z_\infty(G)$, which is the final term in the upper central series of $G$ (see \cite[Proposition 2.1]{az}). Moreover, for all $x, y \in G$, we note that $\langle x,y  \rangle$ is nilpotent if and only if $\langle xZ_\infty(G), yZ_\infty(G)\rangle$ is a nilpotent subgroup of $G/Z_\infty(G)$. This implies that there is a bijective correspondence between the connected components of $\Gamma_{\mathcal{N}}(G)$ and  
$\Gamma_{\mathcal{N}}(G/Z_\infty(G))$. Moreover, the corresponding components under this bijection have the same diameter, so we are free to assume that $Z_\infty(G)=1.$

Next observe that if $x$ and $y$ are adjacent vertices in $\Gamma_{\mathcal{N}}(G)$, then they have distance at most two in the commuting graph $\Gamma_{\mathcal{A}}(G).$ Indeed, there is a path $x \sim z \sim y$ in $\Gamma_{\mathcal{A}}(G)$ for every nontrivial element $z \in Z(\la x,y \ra)$. Therefore, $\Gamma_{\mathcal{N}}(G)$ and the commuting graph of $G$ have the same connected components and the result now follows by applying the main theorem of \cite{MP}.
\end{proof}

Finally, let $\mathcal{C}$ be the family of metacyclic groups. We conclude this section with the following result on the metacyclic graph $\Gamma_{\mathcal{C}}(G)$ of a finite simple group.

\begin{prop}\label{p:metac}
Let $G$ be a non-abelian finite simple group. Then either
\begin{itemize}\addtolength{\itemsep}{0.2\baselineskip}
\item[{\rm (i)}] The metacyclic graph $\Gamma_{\mathcal{C}}(G)$ is connected; or
\item[{\rm (ii)}] $G = {\rm L}_{2}(3^f)$ and $f \geqs 3$ is odd.
\end{itemize}
\end{prop}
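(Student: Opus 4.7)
The plan has two halves---connectivity of $\Gamma_{\mathcal{C}}(G)$ in the generic case and construction of a disconnection in the excepted case---both resting on two elementary observations. First, any two involutions in $G$ generate a dihedral group, which is metacyclic, so the involutions of $G$ form a clique in $\Gamma_{\mathcal{C}}(G)$. Second, if $|N_{G}(\langle x\rangle)|$ is even, then any involution $t \in N_{G}(\langle x\rangle)$ yields $\langle x,t\rangle$ with the cyclic normal subgroup $\langle x\rangle$ of index at most $2$, hence metacyclic; so $x \sim t$ in $\Gamma_{\mathcal{C}}(G)$. Combined with the trivial adjacency $x \sim x^{k}$ via the cyclic group $\langle x\rangle$, it suffices to connect every element of prime order to an involution.

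For the generic case, the plan is to show that every element $x$ of prime order in $G$ lies at distance at most $2$ from an involution. When $|N_{G}(\langle x\rangle)|$ is even this is immediate. The remaining elements of prime order with odd normalizer form a short list essentially isolated by Propositions~\ref{p:ex_real} and~\ref{p:class_real}; for each, one exhibits a metacyclic Frobenius subgroup (typically $\langle x\rangle \rtimes C_{k}$ arising in a torus normalizer) that connects $x$ to a semisimple element whose normalizer contains a Weyl-group involution. The prototype is $G = \mathrm{L}_{2}(p^f)$ with $p$ odd and $f$ odd: a direct computation gives $N_{G}(\langle u\rangle) = U \rtimes C_{(p-1)/2}$ for a unipotent $u$, and $\langle u\rangle \rtimes C_{(p-1)/2}$ is a metacyclic Frobenius group connecting $u$ to a semisimple element of the split torus, whose normalizer is even thanks to the Weyl involution. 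The bridge collapses precisely when $p = 3$, delineating the exception.

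For the exceptional case $G = \mathrm{L}_{2}(3^f)$ with $f \geq 3$ odd, the plan is to show that a unipotent $u \in G$ of order $3$ lies in a different component from any involution by proving the stronger statement: \emph{if $\langle u,y\rangle$ is metacyclic, then $y$ is itself a $3$-element}. Using Dickson's classification of subgroups of $\mathrm{L}_{2}(q)$, any subgroup $H = \langle u,y\rangle$ either (a) is cyclic of order $3$, (b) lies in the unique Borel $B = U \rtimes T$ containing $u$, (c) is isomorphic to $A_{4}$, or (d) lies in a proper subfield subgroup $\mathrm{L}_{2}(3^m)$ with $m \mid f$ and $m$ odd. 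Dihedral subgroups contribute nothing since $(3^f \pm 1)/2$ is coprime to $3$, which rules out $S_{4}$ and $A_{5}$ for the same reason (each contains $S_{3}$). In case~(c), $A_{4}$ is not metacyclic and the only metacyclic overgroup of $u$ inside $A_{4}$ is $\langle u\rangle$; case~(d) is handled by induction on $m$, the base $m=1$ reducing to case~(c) via $\mathrm{L}_{2}(3) \cong A_{4}$.

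The main obstacle is the Borel case~(b). Here Schur--Zassenhaus gives $H = V \rtimes S$ with $V = H \cap U$ and $S$ conjugate in $B$ to a subgroup of $T$. Metacyclicity of $H$ forces $V$ (the Sylow $3$-subgroup of $H$) to have $\mathbb{F}_{3}$-dimension at most $2$, and the crux of the argument is that stabilization of $V$ by $S$ forces the torus element $a \in \mathbb{F}_{3^f}^{*}$ representing $S$ to satisfy $a^{2} \in \mathbb{F}_{3}^{*}$; since $4 \nmid 3^f - 1$ when $f$ is odd, this forces $a \in \mathbb{F}_{3}^{*} = \{\pm 1\}$, so $S$ is trivial in $T = \mathbb{F}_{3^f}^{*}/\{\pm 1\}$. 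Hence $H = V \leq U$ consists entirely of $3$-elements, completing the proof that unipotents are isolated from involutions and that $\Gamma_{\mathcal{C}}(G)$ is disconnected. The most technically involved portion of the proof is the case-by-case construction of Frobenius bridges in the generic part, across families of simple groups of Lie type; the exceptional part reduces to the clean stabilizer computation above combined with Dickson's classification.
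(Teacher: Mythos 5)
Your two opening observations (involutions form a clique; a $2$-generated abelian or cyclic-by-cyclic group is metacyclic) are the same ones the paper starts from, and your analysis of the exceptional case $G = \mathrm{L}_2(3^f)$ is essentially correct --- indeed it proves more than the proposition requires, since the statement only asks that these groups be excluded from the connectivity claim, not that their metacyclic graphs be shown disconnected. The problem is the generic case, where there is a genuine gap. You propose to show that every element of prime order lies within distance $2$ of an involution, and you claim the problematic elements ``form a short list essentially isolated by Propositions \ref{p:ex_real} and \ref{p:class_real}''. But those propositions concern the \emph{soluble} graph, and their proofs connect $x$ to an involution through subgroups that are soluble but not metacyclic: a Borel subgroup $B=N_{G_0}(P)$ in the unipotent case, and $\langle T,z\rangle$ for a (generally non-cyclic) maximal torus $T$ in the semisimple case. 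Neither argument transfers, so the set of elements you must treat by hand is far larger than the exceptional cases of those propositions, and your plan reduces to the deferred ``case-by-case construction of Frobenius bridges'', which you do not carry out; alternating and sporadic groups are not addressed at all.

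The paper avoids all of this with a propagation lemma that your outline is missing: if the component $\Omega$ containing the involutions contains \emph{one} element of order $p$, then it contains \emph{every} element whose order is divisible by $p$ (pass from $y$ to a power $y^m$ of $p$-power order, then to a nontrivial element of $Z(P)$ for a Sylow $p$-subgroup $P$ containing $y^m$ and a conjugate of $x$; each consecutive pair generates a $2$-generated abelian, hence metacyclic, group). This reduces the whole problem to exhibiting a single element of each odd prime order in $\Omega$. Real elements lie in $\Omega$ automatically, so one may invoke the Dolfi--Malle--Navarro classification of simple groups with no real $p$-elements; outside $\mathrm{L}_2(q)$ with $q=p^f\equiv 3 \imod{4}$ the Sylow $p$-subgroup $P=\langle u\rangle$ is then cyclic, and since $G$ is not $p$-nilpotent there is $v\in N_G(P)\setminus C_G(P)$ whose order is divisible by a prime dividing $p-1$; minimality of $p$ puts $v$ in $\Omega$, and $\langle u,v\rangle$ is metacyclic, a contradiction. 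The remaining case $\mathrm{L}_2(q)$ is settled by the metacyclic Borel subgroup of a subfield subgroup $\mathrm{L}_2(p)$, which is exactly where $p=3$ drops out as the exception. To salvage your outline you would need either to supply this propagation step or to give a complete treatment of all non-real prime-order elements with odd centraliser in every family of simple groups; as written, the generic half of your proof is not complete.
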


\begin{proof}
First recall that any two involutions in $G$ generate a dihedral group, which is metacyclic, and thus the set of involutions in $G$ form a clique in $\Gamma_{\mathcal{C}}(G)$. In particular, there is a connected component $\O$ that contains every involution in $G$.  
	
We claim that if $p$ is a prime and $\Omega$ contains an element $x$ of order $p$, then it contains every element in $G$ whose order is divisible by $p$. To see this, suppose $y \in G$ has order divisible by $p$ and write $|y|=p^am$, where $a \geqs 1$ and $(p,m)=1$. Let $P$ be a Sylow $p$-subgroup of $G$ containing $y^m$ and a conjugate $x^g$ of $x$. Let $z$ be a nontrivial element of $Z(P)$. Since $x \in \Omega$, there exists a path 
\[
x_1 \sim x_2 \sim \cdots \sim x_n
\]
in $\Gamma_{\mathcal{C}}(G)$, where $x_1 = x$ and $x_n$ is an involution. But then 
\[
y \sim y^m \sim z \sim x^g \sim x_2^g \sim \cdots \sim x_n^g
\]
is also a path in $\Gamma_{\mathcal{C}}(G)$ and we conclude that $y \in \Omega$. In particular, $\Omega$ contains every element in $G$ of even order.

Next let $x \in G$ be a real element with $|x| \geqs 3$, which means that $x^g=x^{-1}$ for some $g \in G$. Since $g$ must have even order and $\langle x,g\rangle$ is metacyclic, it follows that $x \in \Omega$. 

Finally, let us assume $G \ne {\rm L}_2(3^f)$ with $f \geqs 3$ odd. We claim that $\Omega$ contains at least one element of order $p$ for every odd prime divisor $p$ of $|G|$. In view of our first claim above, this immediately implies that $\Gamma_{\mathcal{C}}(G)$ is connected as in (i). So it remains to establish this claim. 

Suppose the claim is false and let $p$ be the smallest prime divisor of $|G|$ such that $\Omega$ contains no element of order $p$. Then $G$ does not contain a real element of order $p$ and so the possibilities for $(G,p)$ are determined in \cite[Theorem 2.1]{DMN} by Dolfi et al. We consider each possibility in turn.

First assume $G \neq {\rm L}_2(q)$ with $q=p^f \equiv 3 \imod{4}$. Let $P$ be a Sylow $p$-subgroup of $G$ with $|P| = p^n$ and note that $P = \la u \ra$ is cyclic by \cite[Theorem 2.1]{DMN}. Now $P\not\leqs  Z(N_G(P))$ since $G$ is not $p$-nilpotent, so there exists an element $v \in N_G(P)\setminus C_G(P)$. In particular,  $vC_G(P)$ is a nontrivial element of $N_G(P)/C_G(P)\leqs {\rm Aut}(P)$, so its order divides $\varphi(p^n)=p^{n-1}(p-1)$ and is coprime to $p$ (since $P\leqs C_G(P)$). But then there is a prime $q$ dividing $(|v|,p-1)$ and so the minimality of $p$ implies that  $v$ is contained in $\Omega$. However, $\langle u, v\rangle$ is metacyclic and thus $u \sim v$ in $\Gamma_{\mathcal{C}}(G)$, which means that $u \in \Omega$ and we have reached a contradiction.

Finally, let us assume $G = {\rm L}_2(q)$ with $q=p^f \equiv 3 \imod{4}$. Then $G$ contains a subfield subgroup $H = {\rm L}_2(p)$, which in turn contains a metacyclic subgroup of order $p(p-1)/2$ (a Borel subgroup of $H$). Therefore, if $p \ne 3$ then $\Omega$ contains an element of order $p$ and the proof of the proposition is complete.
\end{proof}

\begin{rem}
Let us observe that there exist groups $G$ arising in part (ii) of Proposition \ref{p:metac} for which $\Gamma_{\mathcal{C}}(G)$ is disconnected. For example, suppose $G = {\rm L}_{2}(27)$ and let $H$ be a maximal subgroup of $G$ with order divisible by $3$. Then either $H = 3^3{:}13$ is a Borel subgroup or $H = A_4$. In particular, we deduce that 
$\Gamma_{\mathcal{C}}(G)$ has $28$ connected components, each containing $3$ elements (one such component for each Sylow $3$-subgroup of $G$), plus an additional connected component comprising the remaining $743$ elements in $G$. 
\end{rem}

\section{Some related problems}\label{s:op}

In this final section, we present some open problems on the soluble graph of a finite group that arise naturally from our work in this paper. Throughout this section, $G$ denotes a finite insoluble group with $R(G)=1$.

Our first problem concerns the sharpness of the main bound in Theorem \ref{t:main1}.

\begin{prob}\label{pr:1}
Is there a finite group with $\delta_{\mathcal{S}}(G) = 5$? 
\end{prob}

\begin{prob}\label{pr:11}
Are there infinitely many groups with $\delta_{\mathcal{S}}(G) \geqs 4$?
\end{prob}

By Theorem \ref{t:main4}, we have shown that there are infinitely many alternating groups with $\delta_{\mathcal{S}}(G) \geqs 4$, modulo the conjectured existence of  infinitely many Sophie Germain primes. A positive solution to the following conjecture would resolve Problem \ref{pr:11} (note that the condition $p \geqs 11$ is necessary since $\delta_{\mathcal{S}}(A_7) = 3$). 

\begin{con}\label{con:1}
Let $p \geqs 11$ be a prime with $p \equiv 3 \imod{4}$. Then $\delta_{\mathcal{S}}(A_p) \geqs 4$.
\end{con}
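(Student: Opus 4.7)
The plan is to adapt the counting strategy from the proof of Theorem \ref{t:sophie} to the context of $A_p$ itself. Set $G = A_p$, fix a $p$-cycle $x$, and let $\mathcal{A}$ denote the set of $p$-cycles in $G$, so that $|\mathcal{A}| = (p-1)!$. We decompose
\[
|\mathcal{A} \cap B_3(x)| = 1 + \alpha_1 + \alpha_2 + \alpha_3,
\]
where $\alpha_\ell = |\{y \in \mathcal{A} : \delta(x,y) = \ell\}|$, and it will suffice to show that this quantity is strictly less than $(p-1)!$. The key structural input is Jones's theorem \cite[Theorem 1.2]{Jones}, which implies that for any $p$-cycle $y$ we have $B_1(y) \subseteq N_G(\langle y\rangle) = {\rm AGL}_1(p) \cap A_p = C_p \rtimes C_{(p-1)/2}$, a group of odd order $p(p-1)/2$ because $p \equiv 3 \imod{4}$. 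It is then immediate that $B_1(x) \cap \mathcal{A} = \langle x\rangle \setminus \{1\}$, so $\alpha_1 = p-2$.

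For $\alpha_2$, any $p$-cycle $y$ with $\delta(x,y) = 2$ admits a nontrivial element $z \in N_G(\langle x\rangle) \cap N_G(\langle y\rangle)$; since $\langle x\rangle \ne \langle y\rangle$ are distinct Sylow $p$-subgroups of $G$, the order of $z$ must divide $(p-1)/2$. A standard orbit-counting computation shows that each element $z \in N_G(\langle x\rangle)$ of order $d \mid (p-1)/2$, $d > 1$, has cycle shape $(d^{(p-1)/d}, 1)$ and normalises exactly $M(d) := \varphi(d)\, d^{(p-1)/d}\, ((p-1)/d)!/(p-1)$ Sylow $p$-subgroups of $G$. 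Since $N_G(\langle x\rangle)$ contains $p\varphi(d)$ elements of order $d$, and each Sylow $p$-subgroup supplies $p-1$ $p$-cycles, one obtains
\[
\alpha_2 \;\leq\; p(p-1)\sum_{\substack{d \mid (p-1)/2 \\ d > 1}} \varphi(d)\bigl(M(d)-1\bigr),
\]
whose dominant contribution is of factorial order $((p-1)/d)!$ for the smallest prime divisor $d$ of $(p-1)/2$.

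For $\alpha_3$, a path $x \sim z_1 \sim z_2 \sim y$ forces $z_1 \in N_G(\langle x\rangle)$ and $z_2 \in N_G(\langle y\rangle)$, and if either $z_i$ has order $p$ then Jones's theorem reduces the path to length two, collapsing this case into $\alpha_2$. We may therefore assume that $|z_1|$ and $|z_2|$ both divide $(p-1)/2$, so $H = \langle z_1, z_2\rangle$ is an intransitive soluble subgroup of $A_p$ of odd order coprime to $p$. The plan is to stratify the count of $\alpha_3$ by the orbit partition of $H$ on $\{1, \ldots, p\}$: on each orbit of length $m \leq p-1$, the transitive constituent of $H$ is a soluble transitive subgroup of $S_m$, again constrained by Jones's theorem. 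For each admissible configuration we bound the number of compatible pairs $(z_1, z_2)$ and then multiply by the factor $M(d_2)$ counting the Sylow $p$-subgroups $\langle y\rangle$ normalised by $z_2$, yielding an overall bound of factorial type $((p-1)/d)!$ which is asymptotically negligible compared with $(p-1)!$.

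The principal obstacle is the detailed control of $\alpha_3$. In the Sophie Germain setting of Theorem \ref{t:sophie} only four orbit patterns needed to be considered for $H$, but here $(p-1)/2$ may admit many divisors, generating a wide family of admissible orbit partitions; the case analysis threatens to proliferate. A uniform bound of the shape
\[
\alpha_3 \;\leq\; p\sum_{d_1, d_2} \varphi(d_1)\varphi(d_2)\, N(d_1, d_2)\, M(d_2),
\]
with $N(d_1, d_2)$ an explicit Jones-style estimate for the number of compatible $z_2$ of order $d_2$ given $z_1$ of order $d_1$, should work for $p$ sufficiently large. Verifying the inequality $|\mathcal{A} \cap B_3(x)| < (p-1)!$ simultaneously for every $p \geq 11$ with $p \equiv 3 \imod{4}$ will then likely require pairing this asymptotic estimate with direct {\sc Magma} computation (as in Section \ref{ss:comp}) for the small primes $p$ not already covered by Lemma \ref{l:alt}.
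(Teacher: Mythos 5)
First, a point of order: this statement is Conjecture \ref{con:1}, which the paper poses as an \emph{open problem} (a positive solution would resolve Problem \ref{pr:11}); the paper contains no proof of it. Your proposal therefore cannot be checked against an existing argument and must stand on its own — and as written it does not, because it is a programme rather than a proof.

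The plan itself is the natural one, namely transplanting the counting argument of Theorem \ref{t:sophie} from $A_{2p+1}$ to $A_p$, and the easy parts are sound: by \cite[Theorem 1.2]{Jones} every soluble overgroup of a $p$-cycle $y$ lies in $N_G(\la y \ra) = C_p{:}C_{(p-1)/2}$, so $\a_1 = p-2$, and your formula for $M(d)$ and the resulting bound on $\a_2$ are correct and asymptotically negligible against $(p-1)!$. The difficulties are concentrated in $\a_3$, and there are two genuine gaps there. First, from the fact that $|z_1|$ and $|z_2|$ divide $(p-1)/2$ you conclude that $H=\la z_1,z_2\ra$ is intransitive of odd order coprime to $p$; none of this follows. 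A soluble group generated by two elements of odd order can have even order ($A_4$ is generated by two $3$-cycles), and $H$ may be transitive, in which case it is contained in ${\rm AGL}_1(p)$ and contains a $p$-cycle. This transitive configuration is exactly the analogue of case (i) in the proof of Theorem \ref{t:sophie}: it has to be counted, not excluded, and your stratification over orbits of length $m\leqs p-1$ silently omits it. Second, the intransitive analysis is left entirely unexecuted: in Theorem \ref{t:sophie} the intermediate elements have the single rigid cycle type $(p,p,1)$ and only four orbit patterns occur, whereas here $z_1,z_2$ range over cycle types $(d^{(p-1)/d},1)$ for every divisor $d>1$ of $(p-1)/2$, and you never produce the bound $N(d_1,d_2)$ — you only assert that one ``should work for $p$ sufficiently large.'' Until such a bound is written down, the inequality $|\mathcal{A}\cap B_3(x)|<(p-1)!$ is verified with an explicit threshold, and the finitely many remaining primes are checked computationally (only the primes $p\leqs 59$ with $p\equiv 3\imod{4}$ are currently covered, via Lemma \ref{l:alt}), the conjecture remains open.
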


In part (ii)(d) of Theorem \ref{t:main2} we observe that $\delta_{\mathcal{S}}(G) \geqs 3$ for every non-abelian finite simple group $G$ that is not isomorphic to a classical group. For $G$ classical, we know that $\delta_{\mathcal{S}}(G) = 2$ if $G = {\rm L}_2(q)$ with $q$ even (or $q \in \{5,7\}$) or if $G \in \{{\rm L}_{3}(2), {\rm U}_4(2)\}$. Are there any additional simple groups with this property? 

\begin{prob}\label{pr:3}
Determine all the simple groups with $\delta_{\mathcal{S}}(G) = 2$.
\end{prob}

Recall part (i) of Theorem \ref{t:main2}, which states that $\delta_{\mathcal{S}}(G) \leqs 3$ if $G$ is not almost simple. If $G$ is almost simple with socle $G_0$, then we know that the same bound holds if $G \ne G_0$ and $G_0$ is an alternating or sporadic group (see Theorem \ref{t:spor2}, \eqref{e:a56} and Theorem \ref{t:sym}). This observation leads naturally to the following problem.

\begin{prob}\label{pr:2}
Do we have $\delta_{\mathcal{S}}(G) \leqs 3$ for every non-simple group $G$? 
\end{prob}

Let $G$ be an almost simple group with socle $G_0$. It is worth noting that there are examples with $\delta_{\mathcal{S}}(G) = \delta_{\mathcal{S}}(G_0) \pm 1$. For instance,
\begin{align*}
\delta_{\mathcal{S}}({\rm M}_{12}.2) & = 3 = \delta_{\mathcal{S}}({\rm M}_{12})-1 \\
\delta_{\mathcal{S}}({\rm U}_{4}(2).2) & = 3 = \delta_{\mathcal{S}}({\rm U}_{4}(2))+1.
\end{align*}
It appears that there are very few groups with $\delta_{\mathcal{S}}(G) > \delta_{\mathcal{S}}(G_0)$. Indeed, $G = {\rm U}_4(2).2$ is the only example we are aware of.

\begin{prob}\label{pr:4}
Let $G$ be an almost simple group with socle $G_0$.
\begin{itemize}\addtolength{\itemsep}{0.2\baselineskip}
\item[{\rm (i)}] Classify the almost simple groups with $\delta_{\mathcal{S}}(G) > \delta_{\mathcal{S}}(G_0)$.
\item[{\rm (ii)}] Do we always have $|\delta_{\mathcal{S}}(G) - \delta_{\mathcal{S}}(G_0)| \leqs 1$?
\end{itemize}
\end{prob}

We can extend the previous problem to more general monolithic groups as follows.

\begin{prob}\label{pr:5}
Let $G$ be a monolithic group with socle $T^k$, where $T$ is a non-abelian finite simple group and $k \geqs 2$. 
\begin{itemize}\addtolength{\itemsep}{0.2\baselineskip}
\item[{\rm (i)}] If $\delta_{\mathcal{S}}(T) \geqs 4$ then do we always have $\delta_{\mathcal{S}}(G) = 3$? 
\item[{\rm (ii)}] Do we always have $|\delta_{\mathcal{S}}(G) - \delta_{\mathcal{S}}(T)| \leqs 1$? 
\end{itemize}
\end{prob}

Note part (ii) of Problem \ref{pr:5} can only have a positive solution if Problem \ref{pr:1} has a negative answer in the sense that $\delta_{\mathcal{S}}(T) \leqs 4$ for every non-abelian finite simple group $T$.

\end{document}